\documentclass[12pt]{article}

\usepackage{amsmath,amsthm,amssymb}

\newcommand{\dashint}{-\!\!\!\!\!\!\displaystyle\int}
\newcommand{\dint}{\displaystyle\int}
\newcommand{\R}{\mathbb{R}}
\newcommand{\X}{\mathbb{X}}
\newcommand{\dd}{\partial}

\newcommand{\smoothone}[2]{K_{#2}{\left(#1\right)}}
\newcommand{\smoothtwo}[2]{K^2_{#2}{\left(#1\right)}}
\newcommand{\e}{\varepsilon}
\renewcommand{\a}{\alpha}
\renewcommand{\b}{\beta}
\renewcommand{\d}{\delta}
\newcommand{\g}{\gamma}
\newcommand{\x}{\xi}
\newcommand{\one}{\textbf{1}}

\theoremstyle{plain}
\newtheorem{lemm}{Lemma}
\newtheorem{thmm}[lemm]{Theorem}
\newtheorem{corr}[lemm]{Corollary}

\numberwithin{equation}{section}
\numberwithin{lemm}{section}

\title{Homogenization in Perforated Domains \\ and Interior Lipschitz Estimates}
\date{}
\author{B. Chase Russell\footnote{Supported in part by NSF grant DMS-1161154.}}

\allowdisplaybreaks

\begin{document}

{\maketitle}

\abstract{We establish interior Lipschitz estimates at the macroscopic scale for solutions to systems of linear elasticity with rapidly oscillating periodic coefficients and mixed boundary conditions in domains periodically perforated at a microscopic scale $\e$ by establishing $H^1$-convergence rates for such solutions.  The interior estimates are derived directly without the use of compactness via an argument presented in~\cite{smart} that was adapted for elliptic equations in~\cite{armstrong} and~\cite{shen}.  As a consequence, we derive a Liouville type estimate for solutions to the systems of linear elasticity in unbounded periodically perforated domains.}

\vspace{5mm}

\noindent\textit{MSC2010:} 35B27, 74B05

\noindent\textit{Keywords:} Homogenization; Linear elasticity; Elliptic systems; Lipschitz estimates

%
%
%
%
%
\section{Introduction}\label{section1}
%
%
%
%
%

The purpose of this paper is to establish $H^1$-convergence rates in periodic homogenization and to establish interior Lipschitz estimates at the macroscopic scale for solutions to systems of linear elasticity in domains periodically perforated at a microscopic scale $\e$.  To be precise, we consider the operator
\begin{equation}\label{one}
\mathcal{L}_{\e}=-\text{div}\left({A^\e(x)\nabla}\right)=-\dfrac{\dd}{\dd x_i}\left(a_{ij}^{\a\b}\left(\dfrac{x}{\e}\right)\dfrac{\dd}{\dd x_j}\right),\,\,\,x\in\e\omega,\,\e>0,
\end{equation}
where $A^\e(x)=A(x/\e)$, $A(y)=\{a_{ij}^{\a\b}(y)\}_{1\leq i,j,\a,\b\leq d}$ for $y\in \omega$, $d\geq 2$, and $\omega\subseteq\R^d$ is an unbounded Lipschitz domain with 1-periodic structure, i.e., if $\one_+$ denotes the characteristic function of $\omega$, then $\one_+$ is a 1-periodic function in the sense that
\[
\one_+(y)=\one_+(z+y)\,\,\,\text{ for }y\in\R^d,\,z\in\mathbb{Z}^d.
\]
The summation convention is used throughout.  We write $\e\omega$ to denote the $\e$-homothetic set $\{x\in\R^d\,:x/\e\in\omega\}$.  We assume $\omega$ is connected and that any two connected components of $\R^d\backslash\omega$ are separated by some positive distance.  This is stated more precisely in Section~\ref{section2}.  We also assume each connected component of $\R^d\backslash\omega$ is bounded.

We assume the coefficient matrix $A(y)$ is real, measurable, and satisfies the elasticity conditions
\begin{align}
&a_{ij}^{\a\b}(y)=a_{ji}^{\b\a}(y)=a_{\a j}^{i\b}(y), \label{two}\\
&\kappa_1|\xi|^2\leq a_{ij}^{\a\b}(y)\xi_i^\a\xi_j^\b\leq \kappa_2|\xi|^2,\label{three}
\end{align}
for $y\in\omega$ and any symmetric matrix $\xi=\{\xi_i^\a\}_{1\leq i,\a\leq d}$, where $\kappa_1,\kappa_2>0$.  We also assume $A$ is 1-periodic, i.e.,
\begin{equation}\label{fiftyseven}
A(y)=A(y+z)\,\,\,\text{ for }y\in\omega,\,z\in\mathbb{Z}^d.
\end{equation}
The coefficient matrix of the systems of linear elasticity describes the linear relation between the stress and strain a material experiences during relatively small elastic deformations.  Consequently, the elasticity conditions~\eqref{two} and~\eqref{three} should be regarded as physical parameters of the system, whereas $\e$ is clearly a geometric parameter.

For a bounded domain $\Omega\subset\R^d$, we write $\Omega_\e$ to denote the domain $\Omega_\e=\Omega\cap\e\omega$.  In this paper, we consider the mixed boundary value problem given by
\begin{equation}\label{five}
\begin{cases}
\mathcal{L}_\e({u_{\e}})={0}\,\,\,\text{ in }\Omega_\e, \\
\sigma_\e(u_\e)=0\,\,\,\text{ on }S_\e:=\dd\Omega_\e\cap\Omega \\
u_{\e}=f\,\,\,\text{ on }\Gamma_\e:=\dd\Omega_\e\cap\dd\Omega,
\end{cases}
\end{equation}
where $\sigma_\e=-nA^{\e}(x)\nabla$ and $n$ denotes the outward unit normal to $\Omega_\e$.  We say $u_{\e}$ is a weak solution to~\eqref{five} provided
\begin{equation}\label{six}
\dint_{\Omega_\e}a_{ij}^{\a\b\e}\dfrac{\dd u_{\e}^\b}{\dd x_j}\dfrac{\dd w^\a}{\dd x_i}=0,\,\,\,w=\{w^\a\}_\a\in H^1(\Omega_\e,\Gamma_\e;\R^d),
\end{equation}
and $u_{\e}-f\in H^1(\Omega_\e,\Gamma_\e;\R^d)$, where $H^1(\Omega_\e,\Gamma_\e;\R^d)$ denotes the closure in $H^1(\Omega_\e;\R^d)$ of $C^\infty(\R^d;\R^d)$ functions vanishing on $\Gamma_\e$.  The boundary value problem~\eqref{five} models relatively small elastic deformations of composite materials subject to zero external body forces (see~\cite{yellowbook}).

If $\omega=\R^d$---the case when $\Omega_\e=\Omega$---then the existence and uniqueness of a weak solution $u_\e\in H^1(\Omega_\e;\R^d)$ to~\eqref{five} for a given $f\in H^{1}(\Omega;\R^d)$ follows easily from the Lax-Milgram theorem and Korn's first inequality.  If $\omega\subsetneq\R^d$, then the existence and uniqueness of a weak solution to~\eqref{five} still follows from the Lax-Milgram theorem but in addition Korn's first inequality for perforated domains (see Lemma~\ref{fortysix}).

One of the main results of this paper is the following theorem.  For any measurable set $E$ (possibly empty) and ball $B(x_0,r)\subset\R^d$ with $r>0$, denote
\[
\dashint_{B(x_0,r)\cap E}f(x)\,dx=\dfrac{1}{r^d}\int_{B(x_0,r)\cap E}f(x)\,dx
\]

\begin{thmm}\label{nineteen}
Suppose $A$ satisfies~\eqref{two},~\eqref{three}, and~\eqref{fiftyseven}.  Let $u_\e$ denote a weak solution to $\mathcal{L}_\e(u_\e)=0$ in $B(x_0,R)\cap\e\omega$ and $\sigma_\e(u_\e)=0$ for $B(x_0,R)\cap\dd(\e\omega)$ for some $x_0\in\R^d$ and $R>0$.  For $\e\leq r<R/3$, there exists a constant $C$ depending on $d$, $\omega$, $\kappa_1$, and $\kappa_2$ such that
\begin{equation}\label{seventyfour}
\left(\dashint_{B(x_0,r)\cap\e\omega}|\nabla u_\e|^2\right)^{1/2}\leq C\left(\dashint_{B(x_0,R)\cap\e\omega}|\nabla u_\e|^2\right)^{1/2}.
\end{equation}
\end{thmm}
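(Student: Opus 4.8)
The plan is to follow the scheme of \cite{smart,armstrong,shen}: derive the interior Lipschitz estimate at the macroscopic scale as a consequence of an $H^1$-convergence rate, via an iterative comparison with the homogenized solution at dyadically decreasing scales. The key quantitative input is that on a ball $B(x_0,t)$ with $\e\le t<R$, the perforated-domain solution $u_\e$ can be approximated in $H^1$ by a solution $u_0$ of the homogenized system $\mathcal{L}_0 u_0 = 0$ (with the constant effective elasticity tensor $\widehat A$ obtained from the cell problem on $\omega$) with an error that is $O(\e/t)$ relative to the energy of $u_\e$ on a slightly larger ball. Concretely, I would first record the convergence-rate estimate, established earlier in the paper, in the scale-invariant form
\begin{equation}\label{rate-form}
\left(\dashint_{B(x_0,t)\cap\e\omega}|\nabla u_\e - \nabla u_0|^2\right)^{1/2}\le C\,\frac{\e}{t}\left(\dashint_{B(x_0,2t)\cap\e\omega}|\nabla u_\e|^2\right)^{1/2},
\end{equation}
where $u_0$ solves the homogenized problem on $B(x_0,t)$ matching $u_\e$ on the appropriate part of the boundary; the precise matching and the extension operator for perforated domains (Lemma~\ref{fortysix} and the companion extension results) are what make the right-hand side the full energy of $u_\e$.

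Next I would invoke the interior Lipschitz (indeed $C^{1,\mu}$) regularity for the constant-coefficient homogenized elasticity system $\mathcal{L}_0$, which gives, for any $\theta\in(0,1/4)$,
\begin{equation}\label{hom-reg}
\left(\dashint_{B(x_0,\theta t)}|\nabla u_0|^2\right)^{1/2}\le C_0\left(\dashint_{B(x_0,t)}|\nabla u_0|^2\right)^{1/2},
\end{equation}
with $C_0$ depending only on $d,\kappa_1,\kappa_2$. Combining \eqref{rate-form} and \eqref{hom-reg}, and writing $\Phi(t)=\left(\dashint_{B(x_0,t)\cap\e\omega}|\nabla u_\e|^2\right)^{1/2}$, the triangle inequality yields a one-step iteration inequality of the form
\[
\Phi(\theta t)\le C_0\,\Phi(t) + C\,\frac{\e}{t}\,\Phi(2t)
\]
valid for $\e\le \theta t$. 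The standard trick is to fix $\theta$ small enough that $C_0\theta^{?}$... more precisely, one does not want to contract $\Phi$ itself (it need not contract) but rather to set up the iteration on the excess or simply to absorb: choose $\theta$ and note that as long as $t\ge \e/\theta$ the perturbation term is harmless, so iterating from $t=R/3$ down to the smallest admissible scale $t\sim\e$ through the geometric sequence $t_k=\theta^k R/3$ produces
\[
\Phi(t_k)\le C\,\Phi(R/3)\qquad\text{for all }k\text{ with }t_k\ge\e,
\]
with $C$ independent of $k$, because the accumulated perturbation $\sum_k (\e/t_k)$ is a convergent geometric series once $t_k\gtrsim\e$. Finally, for an arbitrary $r\in[\e,R/3)$ one picks the $k$ with $t_{k+1}<r\le t_k$, uses monotonicity of the integral (the ball $B(x_0,r)$ sits between two consecutive dyadic balls) together with a doubling argument from the Caccioppoli inequality on the perforated domain to pass from the discrete scales to the continuous $r$, and replaces $\Phi(R/3)$ by $\Phi(R)$ at the cost of another constant. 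This gives \eqref{seventyfour}.

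The main obstacle, and the point where the perforated geometry genuinely enters, is making \eqref{rate-form} rigorous: one must solve the homogenized problem on a non-perforated ball while the data for $u_\e$ lives on the perforated domain, which forces the use of a bounded linear extension operator $H^1(B\cap\e\omega)\to H^1(B)$ compatible with the periodic structure of $\omega$ (with operator norm independent of $\e$), together with the Neumann condition $\sigma_\e(u_\e)=0$ on the holes so that no boundary-layer term on $\dd(\e\omega)\cap\Omega$ spoils the rate. Establishing the $O(\e/t)$ bound then requires the usual two-scale expansion $u_\e\approx u_0 + \e\chi(x/\e)\nabla u_0$ with correctors $\chi$ solving cell problems on the perforated period cell, careful treatment of the flux corrector, and a boundary-layer cutoff near $\dd B(x_0,t)$; each of these steps is more delicate here than in the classical case because the correctors are only defined on $\omega$ and one must control their traces and extensions. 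The remaining pieces — Caccioppoli on $\e\omega$, the homogenized regularity \eqref{hom-reg}, and the summation of the geometric series — are standard once \eqref{rate-form} is in hand, so I expect essentially all of the real work to be in the convergence-rate lemma that the paper presumably proves before this theorem.
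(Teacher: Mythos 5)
There are two genuine gaps, and they are precisely the points where the real work lies. First, your key quantitative input is an approximation of $\nabla u_\e$ by $\nabla u_0$ in $L^2$ with relative error $O(\e/t)$. No such estimate holds: without the corrector term, $\nabla u_\e-\nabla u_0$ is generically of size $O(1)$ in $L^2$ (the convergence of gradients is only weak), so an estimate putting $\|\nabla u_\e-\nabla u_0\|_{L^2}$ on the left with a small factor on the right is false as stated. What the paper actually proves (Theorem~\ref{ten}) is an $H^1$ rate of order $\e^{1/2}$ for $u_\e-u_0-\e\chi^\e\smoothtwo{(\nabla u_0)\eta_\e}{\e}$, in terms of the boundary data, and in the Lipschitz argument this is only used at the level of the \emph{functions}: Lemma~\ref{forty} produces $v$ with $\mathcal{L}_0(v)=0$ and $\|u_\e-v\|_{L^2(B_\e(r))}$ controlled by $(\e/r)^{1/2}$ times the $L^2$ norm of $u_\e$ on a larger perforated ball (the construction selects a good radius by the coarea formula and feeds the extended trace $P_\e u_\e$ into Theorem~\ref{ten}). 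Gradient information is recovered only afterwards, through Caccioppoli on the perforated ball (Lemma~\ref{thirtyseven}).

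Second, your iteration does not close. With $\Phi(\theta t)\leq C_0\Phi(t)+C(\e/t)\Phi(2t)$ and $C_0>1$, iterating $k\sim\log(R/\e)$ times produces a factor $C_0^{k}$, i.e.\ a constant blowing up as a power of $R/\e$; ``simply absorbing'' the perturbation does not remove the $C_0^k$ coming from the main term, and the convergence of $\sum_k\e/t_k$ is beside the point. You mention in passing that one should iterate an excess rather than $\Phi$, but that is exactly the step you would need to carry out: the paper works with $H_\e(r;u_\e)$ defined in~\eqref{fortyfive}, the $L^2$ distance of $u_\e$ to \emph{affine} functions normalized by $r$, which genuinely contracts, $H_\e(\theta r;u_\e)\leq\frac12 H_\e(r;u_\e)+\frac{C}{r}(\e/r)^{1/2}\inf_q(\dashint_{B_\e(3r)}|u_\e-q|^2)^{1/2}$ (Lemma~\ref{fifty}, using the $C^2$ estimate for $\mathcal{L}_0$ and Lemma~\ref{fortytwo} to compare perforated and full averages), together with the companion quantity $h(r)=|M_r|$ tracking the minimizing affine slope, whose oscillation on $[r,3r]$ is bounded by $H(3r)$. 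The general iteration lemma of Shen (Lemma~\ref{fortynine}) then yields $H(r)+h(r)\leq C\{H(1)+h(1)\}$ uniformly down to scale $\e$, and Caccioppoli converts this into~\eqref{seventyfour}. Without reformulating your scheme in terms of this affine excess and the slope bookkeeping, the argument as written gives at best a constant depending on $\e$.
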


The scale-invariant estimate in Theorem~\ref{nineteen} should be regarded as a Lipschitz estimate for solutions $u_\e$, as under additional smoothness assumptions on the coefficients $A$ we may deduce interior Lipschitz estimate for solutions to~\eqref{five} from local Lipschitz estimates for $\mathcal{L}_1$ and a ``blow-up argument'' (see the proof of Lemma~\ref{thirtyseven}).  In particular, if $A$ is H\"{o}lder continuous, i.e., there exists a $\tau\in (0,1)$ with 
\begin{equation}\label{fiftyfour}
|A(x)-A(y)|\leq C|x-y|^\tau\,\,\,\text{ for }x,y\in\omega
\end{equation}
for some constant $C$ uniform in $x$ and $y$, we may deduce the following corollary.

\begin{corr}\label{fiftyfive}
Suppose $A$ satisfies~\eqref{two},~\eqref{three},~\eqref{fiftyseven}, and~\eqref{fiftyfour}, and suppose $\omega$ is an unbounded $C^{1,\a}$ domain for some $\a>0$.  Let $u_\e$ denote a weak solution to $\mathcal{L}_\e(u_\e)=0$ in $B(x_0,R)\cap\e\omega$ and $\sigma_\e(u_\e)=0$ for $B(x_0,R)\cap\dd(\e\omega)$ for some $x_0\in\R^d$ and $R>0$.  Then
\begin{equation}\label{sixtyone}
\|\nabla u_\e\|_{L^\infty(B(x_0,R/3)\cap\e\omega)}\leq C\left(\dashint_{B(x_0,R)\cap\e\omega}|\nabla u_\e|^2\right)^{1/2},
\end{equation}
where $C$ depends on $d$, $\omega$, $\kappa_1$, $\kappa_2$, $\tau$, and $\a$.
\end{corr}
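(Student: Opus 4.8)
\textit{Proof proposal.} The plan is to combine the scale‑invariant estimate of Theorem~\ref{nineteen}, which controls averages of $|\nabla u_\e|$ over all radii $r\in[\e,R/3)$, with the classical small‑scale regularity of the \emph{fixed} operator $\mathcal{L}_1$, which---under the H\"{o}lder bound~\eqref{fiftyfour} and the $C^{1,\a}$ assumption on $\omega$---governs the solution below the scale $\e$. Since $\mathcal{L}_\e$ and $\mathcal{L}_1$ are conjugate under the dilation $z\mapsto\e z$, which also sends $\omega$ to $\e\omega$ and under which both sides of~\eqref{sixtyone} scale the same way (both like $\e^{-1}$), the whole argument reduces to two applications of this rescaling together with Theorem~\ref{nineteen}.

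First I would establish a pointwise estimate at the microscopic scale: if $w$ is a weak solution of $\mathcal{L}_\e(w)=0$ in $B(y_0,\e)\cap\e\omega$ with $\sigma_\e(w)=0$ on $B(y_0,\e)\cap\dd(\e\omega)$, then
\[
|\nabla w(y)|\le C\left(\dashint_{B(y_0,\e)\cap\e\omega}|\nabla w|^2\right)^{1/2},\qquad y\in B(y_0,\e/2)\cap\e\omega,
\]
with $C=C(d,\omega,\kappa_1,\kappa_2,\tau,\a)$. To prove this I would set $v(z):=w(\e z)$, observe that $A^\e(\e z)=A(z)$ so that $v$ is a weak solution of $\mathcal{L}_1(v)=0$ in $B(y_0/\e,1)\cap\omega$ with $\sigma_1(v)=0$ on $B(y_0/\e,1)\cap\dd\omega$, apply the interior and boundary Lipschitz (Schauder) estimates for the elliptic system $\mathcal{L}_1$ with traction‑free data on $B(z_0,1)\cap\omega$---which, thanks to the periodicity of $\omega$ and to the holes being bounded and separated by a fixed positive distance, hold with a constant uniform in $z_0$; this is exactly the local regularity underlying the blow‑up argument in the proof of Lemma~\ref{thirtyseven}---and then undo the dilation, using $|\nabla v|=\e|\nabla w|$ and the change of variables $x=\e z$, which turns $\dashint_{B(y_0/\e,1)\cap\omega}|\nabla v|^2$ into $\e^2\dashint_{B(y_0,\e)\cap\e\omega}|\nabla w|^2$.

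With this in hand I would split into two regimes. If $R\le 5\e$, the ball $B(x_0,R)$ is itself microscopic: writing $v(z)=u_\e(\e z)$ and applying the local Lipschitz estimate for $\mathcal{L}_1$ on $B(x_0/\e,R/(3\e))\subset B(x_0/\e,R/\e)$---a ball of bounded radius $R/\e\le 5$, on which the estimate still holds with a uniform constant since shrinking or boundedly dilating does not enlarge the relevant H\"{o}lder or $C^{1,\a}$ seminorms---then rescaling back exactly as above yields~\eqref{sixtyone}. If instead $R>5\e$, then for every $x\in B(x_0,R/3)\cap\e\omega$ we have $B(x,2R/3)\subset B(x_0,R)$ and $\e<2R/9$, so $u_\e$ solves the equation with traction‑free data in $B(x,2R/3)\cap\e\omega$ and Theorem~\ref{nineteen} applies with center $x$, inner radius $\e$, and outer radius $2R/3$; chaining the pointwise estimate above, Theorem~\ref{nineteen}, and the inclusion $B(x,2R/3)\subset B(x_0,R)$ gives
\[
|\nabla u_\e(x)|\le C\left(\dashint_{B(x,\e)\cap\e\omega}|\nabla u_\e|^2\right)^{1/2}\le C\left(\dashint_{B(x,2R/3)\cap\e\omega}|\nabla u_\e|^2\right)^{1/2}\le C\left(\dashint_{B(x_0,R)\cap\e\omega}|\nabla u_\e|^2\right)^{1/2},
\]
and taking the supremum over $x\in B(x_0,R/3)\cap\e\omega$ yields~\eqref{sixtyone}.

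The routine part is the bookkeeping: the two dilations, the compatibility of the traction‑free condition with restriction to subballs, and the comparison of $r^{-d}$‑normalized averages on nested balls. The point that requires genuine care---and the reason the extra hypotheses on $A$ and $\omega$ are imposed---is the \emph{uniform‑in‑$z_0$} local Lipschitz estimate for $\mathcal{L}_1$ used above: one must check that, because the perforation is periodic with holes that are bounded and separated by a fixed distance, a ball $B(z_0,1)$ (or of bounded radius) meets only boundedly many boundary components, each with uniformly bounded $C^{1,\a}$ character, so that the classical boundary Schauder constant for the traction‑free elasticity system can be taken independent of $z_0$. Establishing this uniformity via the blow‑up argument of Lemma~\ref{thirtyseven} is the substantive step; once it is granted, the corollary follows as sketched.
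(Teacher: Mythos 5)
Your proposal is correct and follows essentially the same route as the paper: rescale to invoke the classical local Lipschitz (Schauder) estimate for $\mathcal{L}_1$ with traction-free data at the microscopic scale $\e$ (the ``blow-up argument''), use Theorem~\ref{nineteen} to pass from averages over balls of radius comparable to $\e$ up to the ball $B(x_0,R)$, handle the regime $R\lesssim\e$ directly from the local estimate, and conclude by taking the supremum over (equivalently, covering by) $\e$-balls centered in $B(x_0,R/3)$. The only differences are bookkeeping choices (your threshold $R\le 5\e$ versus the paper's $\e\ge R/6$, and centering Theorem~\ref{nineteen} at $x$ with outer radius $2R/3$), which do not change the argument.
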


Another consequence of Theorem~\ref{nineteen} is the following Liouville type property for systems of linear elasticity in unbounded periodically perforated domains.  In particular, we have the following corollary.

\begin{corr}\label{fiftyeight}
Suppose $A$ satisfies~\eqref{two},~\eqref{three}, and~\eqref{fiftyseven}, and suppose $\omega$ is an unbounded Lipschitz domain with 1-periodic structure.  Let $u$ denote a weak solution of $\mathcal{L}_1(u)=0$ in $\omega$ and $\sigma_1(u)=0$ on $\dd\omega$.  Assume
\begin{equation}\label{sixtytwo}
\left(\dashint_{B(0,R)\cap\omega}|u|^2\right)^{1/2}\leq CR^{\nu},
\end{equation}
for some $\nu\in (0,1)$, some constant $C:=C(u)>0$, and for all $R>1$.  Then $u$ is constant.
\end{corr}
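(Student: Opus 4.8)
The plan is to apply Theorem~\ref{nineteen} with $\e = 1$, so that $u$ itself plays the role of $u_\e$, combined with a Caccioppoli-type energy estimate to convert the sublinear growth hypothesis~\eqref{sixtytwo} on $u$ into decay of the rescaled energy of $\nabla u$. Throughout we take $x_0 = 0$.

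First I would record the Caccioppoli inequality for the system $\mathcal{L}_1(u)=0$ in $B(0,2R)\cap\omega$ with $\sigma_1(u)=0$ on $B(0,2R)\cap\dd\omega$: testing the weak formulation against $w=\varphi^2 u$, where $\varphi$ is a cutoff equal to $1$ on $B(0,R)$, supported in $B(0,2R)$, with $|\nabla\varphi|\leq C/R$, and using the ellipticity~\eqref{three} together with a Korn inequality on the perforated domain (in the spirit of Lemma~\ref{fortysix}), one obtains
\[
\int_{B(0,R)\cap\omega}|\nabla u|^2 \leq \frac{C}{R^2}\int_{B(0,2R)\cap\omega}|u|^2 .
\]
Note that $\varphi^2 u$ is an admissible test function here, as no portion of the boundary is of Dirichlet type in the hypotheses. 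Rewriting~\eqref{sixtytwo} as $\int_{B(0,2R)\cap\omega}|u|^2 \leq C R^{d+2\nu}$ for $R>1$ and inserting this into the above gives
\[
\dashint_{B(0,R)\cap\omega}|\nabla u|^2 = \frac{1}{R^d}\int_{B(0,R)\cap\omega}|\nabla u|^2 \leq C R^{2(\nu-1)} .
\]

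Next, fix any $r\geq 1$ and any $R>3r$ (so in particular $R>1$). Applying Theorem~\ref{nineteen} with $\e=1$, $x_0=0$, and this pair $r,R$, together with the bound just obtained, yields
\[
\left(\dashint_{B(0,r)\cap\omega}|\nabla u|^2\right)^{1/2} \leq C\left(\dashint_{B(0,R)\cap\omega}|\nabla u|^2\right)^{1/2} \leq C R^{\nu-1} .
\]
Since $\nu<1$, the right-hand side tends to $0$ as $R\to\infty$, while the left-hand side is independent of $R$; hence $\dashint_{B(0,r)\cap\omega}|\nabla u|^2 = 0$ for every $r\geq 1$. Letting $r\to\infty$ shows $\nabla u = 0$ almost everywhere in $\omega$, and since $\omega$ is connected, $u$ is constant.

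The main obstacle is the Caccioppoli inequality in the perforated setting for the elasticity system: because~\eqref{three} controls only the symmetric part of $\nabla u$, one must invoke a Korn inequality valid on $B(0,R)\cap\omega$ with a constant uniform in the scale $R$ (obtained from the $1$-periodic structure of $\omega$ by a covering/rescaling argument), handled in combination with the Neumann condition on the perforations. Granting that estimate — which is in any case part of the machinery behind Theorem~\ref{nineteen} — the remaining argument is just bookkeeping with the dyadic scales $r$ and $R$.
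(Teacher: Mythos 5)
Your argument is correct and is essentially the paper's own proof: Caccioppoli in the perforated ball plus Theorem~\ref{nineteen} (with $\e=1$) and the growth hypothesis give $\left(\dashint_{B(0,r)\cap\omega}|\nabla u|^2\right)^{1/2}\leq CR^{\nu-1}\to 0$, hence $\nabla u\equiv 0$ and $u$ is constant by connectedness. The ``main obstacle'' you flag is already supplied by the paper as Lemma~\ref{thirtyseven} (the scale-invariant Caccioppoli inequality proved via Korn's inequality, Lemma~\ref{fortysix}, and a blow-up argument), so you could simply cite it rather than re-derive it.
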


Interior Lipschitz estimates for the case $\omega=\R^d$ were first obtained \textit{indirectly} through the method of compactness presented in~\cite{avellaneda}.  Interior Lipschitz estimates for solutions to a single elliptic equation in the case $\omega\subsetneq\R^d$ were obtained indirectly in~\cite{yeh} through the same method of compactness.  The method of compactness is essentially a ``proof by contradiction'' and relies on the qualitative convergence of solutions $u_\e$ (see Theorem~\ref{seventyone}).  The method relies on sequences of operators $\{\mathcal{L}_{\e_k}^k\}_k$ and sequences of functions $\{u_k\}_k$ satisfying $\mathcal{L}_{\e_k}^k(u_k)=0$, where $\mathcal{L}_{\e_k}^k=-\text{div}(A_k^{\e_k}\nabla )$, $\{A_k^{\e_k}\}_k$ satisfies~\eqref{two},~\eqref{three}, and~\eqref{fiftyseven} in $\omega+s_k$ for $s_k\in\R^d$.
In the case $\omega=\R^d$, then $\omega+s_k=\R^d$ for any $s_k\in\R^d$, and so it is clear that estimate~\eqref{seventyfour} is uniform in affine transformations of $\omega$.  In the case $\omega\subsetneq\R^d$, affine shifts of $\omega$ must be considered, which complicates the general scheme.

Interior Lipschitz estimates for the case $\omega=\R^d$ were obtained \textit{directly} in~\cite{shen} through a general scheme for establishing Lipschitz estimates at the macroscopic scale first presented in~\cite{smart} and then modified for second-order elliptic systems in~\cite{armstrong} and~\cite{shen}.  We emphasize that our result is unique in that Theorem~\ref{nineteen} extends estimates presented in~\cite{shen}---i.e., interior Lipschitz estimates for systems of linear elasticity---to the case $\omega\subsetneq\R^d$ while \textit{completely avoiding the use of compactness methods}.

The proof of Theorem~\ref{nineteen} (see Section~\ref{section4}) relies on the quantitative convergence rates of the solutions $u_\e$.  Let $u_0\in H^1(\Omega;\R^d)$ denote the weak solution of the boundary value problem for the homogenized system corresponding to~\eqref{five} (see~\eqref{seven}), and let $\chi=\{\chi_j^\b\}_{1\leq j,\b\leq d}\in H^1_{\text{per}}(\omega;\R^d)$ denote the matrix of correctors (see~\eqref{nine}), where $H^1_{\text{per}}(\omega;\R^d)$ denotes the closure in $H^1(Q\cap\omega;\R^d)$ of the set of 1-periodic $C^\infty(\R^d;\R^d)$ functions and $Q=[-1/2,1/2]^d$.  In the case $\omega\subsetneq\R^d$, the estimate
\[
\|u_\e-u_0-\e\chi^\e\nabla u_0\|_{H^1(\Omega_\e)}\leq C\e^{1/2}\|u_0\|_{H^3(\Omega)}
\]
was proved in~\cite{book2} under the assumption that $\chi_j^\b\in W^{1,\infty}_{\text{per}}(\omega;\R^d)$ for $1\leq j,\b\leq d$, where $W^{1,\infty}_{\text{per}}(\omega;\R^d)$ is defined similarly to $H^1_{\text{per}}(\omega;\R^d)=W^{1,2}_{\text{per}}(\omega;\R^d)$.  However, if it is only assumed that the coefficients $A$ are real, measurable, and satisfy~\eqref{two},~\eqref{three}, and~\eqref{fiftyseven}, then the first-order correctors are not necessarily Lipschitz.  Consequently, the following theorem is another main result of this paper.  Let $K_\e$ denote the smoothing operator at scale $\e$ defined by~\eqref{eighteen}, and let $\eta_\e\in C_0^\infty(\Omega)$ be the cut-off function defined by~\eqref{eleven}.  The use of the smoothing operator $K_\e$ (details are discussed in Section~\ref{section2}) is motivated by work in~\cite{suslina}.
\begin{thmm}\label{ten}
Let $\Omega$ be a bounded Lipschitz domain and $\omega$ be an unbounded Lipschitz domain with 1-periodic structure.  Suppose $A$ is real, measurable, and satisfies~\eqref{two},~\eqref{three}, and~\eqref{fiftyseven}.  Let $u_\e$ denote a weak solution to~\eqref{five}.  There exists a constant $C$ depending on $d$, $\Omega$, $\omega$, $\kappa_1$, and $\kappa_2$ such that
\[
\|u_\e-u_0-\e\chi^\e\smoothtwo{(\nabla u_0)\eta_\e}{\e}\|_{H^1(\Omega_\e)}\leq C\e^{1/2}\|f\|_{H^1(\dd\Omega)}.
\]
\end{thmm}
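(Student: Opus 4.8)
The plan is to run the first-order correction argument of~\cite{shen} and~\cite{suslina} on the perforated domain $\Omega_\e$; the two features that must be accommodated are the perforation and the absence of an $L^\infty$ bound on the correctors (which is exactly what the smoothing operator is for). Write $g:=\smoothtwo{(\nabla u_0)\eta_\e}{\e}$, a $d\times d$ matrix field with entries $g_j^\b=\smoothtwo{\eta_\e\,\partial_j u_0^\b}{\e}$, so that the quantity to be bounded is $\|w_\e\|_{H^1(\Omega_\e)}$ with $w_\e:=u_\e-u_0-\e\,\chi^\e g$. I would first assemble the tools from Section~\ref{section2}: the homogenized coefficients $\widehat A=\{\widehat a_{ij}^{\a\b}\}$ and solution $u_0$ of~\eqref{seven}; the correctors $\chi$ of~\eqref{nine}; the flux corrector $\phi=\{\phi_{kij}^{\a\b}\}$, a $1$-periodic $H^1$ matrix field antisymmetric in $(k,i)$, $\phi_{kij}^{\a\b}=-\phi_{ikj}^{\a\b}$, satisfying
\[
\widehat a_{ij}^{\a\b}+\e\,\frac{\dd}{\dd x_k}\bigl(\phi_{kij}^{\a\b}(x/\e)\bigr)=G_{ij}^{\a\b}(x/\e),\qquad G_{ij}^{\a\b}(y):=\one_+(y)\Bigl(a_{ij}^{\a\b}(y)+a_{ik}^{\a\g}(y)\,\frac{\dd\chi_j^{\g\b}}{\dd y_k}(y)\Bigr),
\]
with $\frac{\dd}{\dd x_i}G_{ij}^{\a\b}(x/\e)=0$; the point to keep in mind is that $G_{ij}^{\a\b}$ vanishes on $\R^d\setminus\omega$. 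I would also recall the properties of the smoothing operator~\eqref{eighteen} — $\|\smoothone{h}{\e}\|_{L^2(\R^d)}\le\|h\|_{L^2(\R^d)}$, $\|\smoothone{h}{\e}-h\|_{L^2(\R^d)}\le C\e\|\nabla h\|_{L^2(\R^d)}$, commutation with $\nabla$, self-adjointness, and, since $\chi,\phi\notin L^\infty$ in general, the bound $\|\psi(\cdot/\e)\,\smoothtwo{h}{\e}\|_{L^2(\Omega_\e)}\le C\|\psi\|_{L^2(Q\cap\omega)}\|h\|_{L^2(\R^d)}$ for $1$-periodic $\psi$ — the cut-off $\eta_\e$ of~\eqref{eleven}, and the regularity estimates for $u_0$ on the bounded Lipschitz domain $\Omega$: $\|\nabla u_0-g\|_{L^2(\Omega)}\le C\e^{1/2}\|f\|_{H^1(\dd\Omega)}$, $\|\nabla\bigl((\nabla u_0)\eta_\e\bigr)\|_{L^2(\Omega)}\le C\e^{-1/2}\|f\|_{H^1(\dd\Omega)}$, and $\|\nabla u_0\|_{L^2(\Omega\cap\{\text{dist}(\cdot,\dd\Omega)\le t\})}\le Ct^{1/2}\|f\|_{H^1(\dd\Omega)}$. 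These last estimates are what produce the exponent $1/2$ and the norm $\|f\|_{H^1(\dd\Omega)}$.

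Next I would set $v_\e:=u_0+\e\,\chi^\e g$. Since $\eta_\e$, and hence $g$ and $\e\chi^\e g$, vanishes in an $O(\e)$-neighbourhood of $\dd\Omega$, there $w_\e=u_\e-v_\e$ coincides with $u_\e-u_0$, which vanishes on $\Gamma_\e$; thus $w_\e\in H^1(\Omega_\e,\Gamma_\e;\R^d)$. Differentiating $v_\e$, inserting the identity for $G$ (valid on $\Omega_\e$, where $\one_+(x/\e)=1$), and using $\frac{\dd\phi_{kij}^{\a\b}}{\dd y_k}(x/\e)=\e\,\frac{\dd}{\dd x_k}\bigl(\phi_{kij}^{\a\b}(x/\e)\bigr)$, one checks that on $\Omega_\e$
\[
(A^\e\nabla v_\e)_i^\a=\widehat a_{ij}^{\a\b}\,\partial_j u_0^\b+\e\,\frac{\dd}{\dd x_k}\bigl(\phi_{kij}^{\a\b}(x/\e)\,g_j^\b\bigr)+(R_\e)_i^\a,
\]
where $(R_\e)_i^\a=(a_{ij}^{\a\b}(x/\e)-\widehat a_{ij}^{\a\b})(\partial_j u_0^\b-g_j^\b)+\e\bigl(a_{ik}^{\a\g}(x/\e)\chi_j^{\g\b}(x/\e)-\phi_{kij}^{\a\b}(x/\e)\bigr)\partial_k g_j^\b$; by Cauchy--Schwarz, the smoothing bound (applied with $\psi=a_{ik}^{\a\g}\chi_j^{\g\b}$, $\psi=\phi_{kij}^{\a\b}$, and $h=\nabla((\nabla u_0)\eta_\e)$) and the estimates above, $\|R_\e\|_{L^2(\Omega_\e)}\le C\e^{1/2}\|f\|_{H^1(\dd\Omega)}$. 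The first two terms on the right are annihilated by $\frac{\dd}{\dd x_i}$ — the second by the antisymmetry of $\phi$, the first by $\text{div}(\widehat A\nabla u_0)=0$ in $\Omega$ — which is the structural reason the scheme closes in the unperforated theory.

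The core is the energy estimate. For $\varphi\in H^1(\Omega_\e,\Gamma_\e;\R^d)$, using $\int_{\Omega_\e}A^\e\nabla u_\e\cdot\nabla\varphi=0$,
\[
\int_{\Omega_\e}A^\e\nabla w_\e\cdot\nabla\varphi=-\int_{\Omega_\e}\Bigl(\widehat a_{ij}^{\a\b}\,\partial_j u_0^\b+\e\,\frac{\dd}{\dd x_k}\bigl(\phi_{kij}^{\a\b}(x/\e)g_j^\b\bigr)\Bigr)\partial_i\varphi^\a-\int_{\Omega_\e}R_\e\cdot\nabla\varphi,
\]
the last term being $\le C\e^{1/2}\|f\|_{H^1(\dd\Omega)}\|\nabla\varphi\|_{L^2(\Omega_\e)}$. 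For the first I would use the algebraic identity $\widehat a_{ij}^{\a\b}\partial_j u_0^\b+\e\,\frac{\dd}{\dd x_k}(\phi_{kij}^{\a\b}(x/\e)g_j^\b)=\widehat a_{ij}^{\a\b}(\partial_j u_0^\b-g_j^\b)+G_{ij}^{\a\b}(x/\e)g_j^\b+\e\,\phi_{kij}^{\a\b}(x/\e)\partial_k g_j^\b$, splitting the integral into $\mathcal A_1+\mathcal A_2+\mathcal A_3$. The terms $\mathcal A_1\le C\|\nabla u_0-g\|_{L^2(\Omega_\e)}\|\nabla\varphi\|_{L^2(\Omega_\e)}$ and $\mathcal A_3=\e\int_{\Omega_\e}\phi_{kij}^{\a\b}(x/\e)\partial_k g_j^\b\,\partial_i\varphi^\a$ (where $\partial_k g_j^\b=\smoothtwo{\partial_k((\partial_j u_0^\b)\eta_\e)}{\e}$, so the smoothing bound applies) are both $\le C\e^{1/2}\|f\|_{H^1(\dd\Omega)}\|\nabla\varphi\|_{L^2(\Omega_\e)}$. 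The delicate term is $\mathcal A_2=\int_{\Omega_\e}G_{ij}^{\a\b}(x/\e)g_j^\b\,\partial_i\varphi^\a$: here, in contrast with the unperforated case, $\int_{\Omega_\e}\widehat A\nabla u_0\cdot\nabla\varphi$ is \emph{not} zero, since the homogenized equation holds on $\Omega$ and not on $\Omega_\e$. Because $G_{ij}^{\a\b}(x/\e)=0$ on $\Omega\setminus\Omega_\e$, I would let $\widetilde\varphi\in H^1_0(\Omega;\R^d)$ be the extension of $\varphi$ provided by the extension operator for $H^1(\Omega_\e,\Gamma_\e;\R^d)$ (bounded uniformly in $\e$) and write
\[
\mathcal A_2=\int_\Omega G_{ij}^{\a\b}(x/\e)g_j^\b\,\partial_i\widetilde\varphi^\a=\int_\Omega\widehat a_{ij}^{\a\b}g_j^\b\,\partial_i\widetilde\varphi^\a+\e\int_\Omega\frac{\dd}{\dd x_k}\bigl(\phi_{kij}^{\a\b}(x/\e)\bigr)g_j^\b\,\partial_i\widetilde\varphi^\a,
\]
the first equality because $G^\e$ vanishes on the holes, the second by the identity for $G$. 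In the first integral $\widehat a_{ij}^{\a\b}g_j^\b=\smoothtwo{\eta_\e(\widehat A\nabla u_0)_i^\a}{\e}$, so moving $\smoothtwo{\cdot}{\e}$ onto $\widetilde\varphi$, commuting it past $\partial_i$, bringing $\eta_\e$ under the derivative, and testing $\text{div}(\widehat A\nabla u_0)=0$ in $\Omega$ against $\eta_\e\,\smoothtwo{\widetilde\varphi}{\e}\in H^1_0(\Omega;\R^d)$ leaves only $-\int_\Omega(\widehat A\nabla u_0)_i^\a(\partial_i\eta_\e)\,\smoothtwo{\widetilde\varphi^\a}{\e}$, which is supported in an $O(\e)$-collar of $\dd\Omega$ and is $\le C\e^{1/2}\|f\|_{H^1(\dd\Omega)}\|\nabla\varphi\|_{L^2(\Omega_\e)}$ by the boundary-layer bound for $\nabla u_0$ and a Hardy inequality for $\widetilde\varphi$. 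In the second integral, integration by parts (justified by density of $C_0^\infty(\Omega)$ in $H^1_0(\Omega)$) together with the antisymmetry $\phi_{kij}^{\a\b}=-\phi_{ikj}^{\a\b}$ annihilates the principal part and leaves $-\e\int_\Omega\phi_{kij}^{\a\b}(x/\e)\partial_k g_j^\b\,\partial_i\widetilde\varphi^\a$, again $\le C\e^{1/2}\|f\|_{H^1(\dd\Omega)}\|\nabla\varphi\|_{L^2(\Omega_\e)}$ by the smoothing bound.

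To conclude, the four estimates give $\bigl|\int_{\Omega_\e}A^\e\nabla w_\e\cdot\nabla\varphi\bigr|\le C\e^{1/2}\|f\|_{H^1(\dd\Omega)}\|\nabla\varphi\|_{L^2(\Omega_\e)}$ for every $\varphi\in H^1(\Omega_\e,\Gamma_\e;\R^d)$. Taking $\varphi=w_\e$, the symmetries~\eqref{two} and the ellipticity~\eqref{three} give $\int_{\Omega_\e}A^\e\nabla w_\e\cdot\nabla w_\e\ge\kappa_1\|e(w_\e)\|_{L^2(\Omega_\e)}^2$ with $e(\cdot)$ the symmetric gradient, and Korn's first inequality for perforated domains (Lemma~\ref{fortysix}) gives $\|\nabla w_\e\|_{L^2(\Omega_\e)}\le C\|e(w_\e)\|_{L^2(\Omega_\e)}$; hence $\|\nabla w_\e\|_{L^2(\Omega_\e)}\le C\e^{1/2}\|f\|_{H^1(\dd\Omega)}$, and a Poincaré inequality for functions in $H^1(\Omega_\e,\Gamma_\e;\R^d)$ promotes this to the claimed $H^1(\Omega_\e)$ bound. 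I expect the genuine obstacle to be the handling of $\mathcal A_2$: in the unperforated theory $\widehat A\nabla u_0$ disappears at once against a divergence-free identity and the test function, whereas here the holes obstruct this; the resolution — absorbing $\widehat A\nabla u_0$ into $G^\e g$ modulo $O(\e^{1/2})$ errors and then exploiting that $G^\e$ vanishes on the holes so as to return to the full domain $\Omega$, where $\text{div}(\widehat A\nabla u_0)=0$ and an extension operator is available — is the one essentially new point. The rest is bookkeeping together with the Lipschitz-domain boundary-layer estimates for $u_0$.
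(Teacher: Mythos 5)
Your route is in substance the paper's own: the smoothed first--order approximation $u_0+\e\chi^\e\smoothtwo{(\nabla u_0)\eta_\e}{\e}$, the flux (dual) correctors of Lemma~\ref{fourteen} used antisymmetrically to gain a factor of $\e$, the nontangential--maximal--function/coarea estimates $\|\nabla u_0\|_{L^2(\mathcal{O}_{t})}\le Ct^{1/2}\|f\|_{H^1(\dd\Omega)}$ and $\|(\nabla^2u_0)\eta_\e\|_{L^2(\Omega)}\le C\e^{-1/2}\|f\|_{H^1(\dd\Omega)}$, the extension operator to pass between $\Omega_\e$ and $\Omega$, and Korn's inequality for perforated domains (Lemma~\ref{fortysix}) to close the energy estimate; your handling of the delicate term $\mathcal A_2$ (exploit that $G^\e=\one_+^\e A^\e\nabla\X^\e$ vanishes in the holes, return to $\Omega$, and test $\mathrm{div}(\widehat A\nabla u_0)=0$ against $\eta_\e K^2_\e(\widetilde\varphi)$) is equivalent to the identity of Lemma~\ref{twenty}. (A minor bookkeeping point: with the paper's normalization the flux identity reads $|Q\cap\omega|\,\widehat a_{ij}^{\a\b}+\e\,\dd_k\bigl(\phi_{kij}^{\a\b}(x/\e)\bigr)=G_{ij}^{\a\b}(x/\e)$, since $\int_Q G=|Q\cap\omega|\widehat A$; this factor is harmless.)

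The genuine gap is in the boundary--collar term that survives after testing the homogenized equation, namely $\int_\Omega \widehat A\nabla u_0\cdot\nabla\eta_\e\,K^2_\e(\widetilde\varphi)$ (the analogue of $I_1$ in Lemma~\ref{twenty}). You dispose of it by asserting that the extension $\widetilde\varphi$ lies in $H^1_0(\Omega;\R^d)$ and invoking ``a Hardy inequality'' near $\dd\Omega$. But the extension operator of Theorem~\ref{thirteen} maps $H^1(\Omega_\e,\Gamma_\e;\R^d)$ into $H^1_0(\Omega_0;\R^d)$ for a strictly larger domain $\Omega_0$, and $\varphi$ vanishes only on $\Gamma_\e=\dd\Omega\cap\dd\Omega_\e$: on the portions of $\dd\Omega$ covered by holes the extended function is in general nonzero, so $\widetilde\varphi\notin H^1_0(\Omega)$ and the Hardy/boundary Poincar\'e inequality you cite is not available as stated. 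Since $|\nabla\eta_\e|\le C\e^{-1}$ on a collar of width $\sim\e$, closing this term (and the other $\e^{-1}$--weighted collar contributions) requires precisely the estimate $\|\widetilde\varphi\|_{L^2(\mathcal{O}_{4\e})}\le C\e\|\nabla\widetilde\varphi\|_{L^2(\Omega)}$ for $\varphi\in H^1(\Omega_\e,\Gamma_\e;\R^d)$. This is a genuinely new ingredient of the perforated setting, proved in the paper in Lemmas~\ref{sixtyfour} and~\ref{twentyone}: using that the holes have uniformly bounded diameter and are separated by the distance $\mathfrak{g}^\omega$ of~\eqref{sixtysix}, one finds within every surface ball $\Delta(x,\e r_0)$ a smaller surface ball of radius $\e\rho_0$ contained in $\Gamma_\e$ on which $\widetilde\varphi$ vanishes, and then applies a scaled Poincar\'e inequality on $\e$--sized boundary patches with bounded overlap. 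Without this step your bound for $\mathcal A_2$ does not close, so the proposal as written has a real hole exactly where the perforation interacts with $\dd\Omega$; the rest of the argument is sound and matches the paper.
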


This paper is structured in the following manner.  In Section~\ref{section2}, we establish notation and recall various preliminary results from other works.  The convergence rate presented in Theorem~\ref{ten} is proved in Section~\ref{section3}.  In Section~\ref{section4}, we prove the interior Lipschitz esitmates given by Theorem~\ref{nineteen} and provide the proof of Corollary~\ref{fiftyfive}.  To finish the section, we prove the Liouville type property Corollary~\ref{fiftyeight}.

%
%
%
%
%
\section{Notation and Preliminaries}\label{section2}
%
%
%
%
%

Fix $\zeta\in C_0^\infty (B(0,1))$ so that $\zeta\geq 0$ and $\int_{\R^d}\zeta=1$.  Define
\begin{equation}\label{eighteen}
\smoothone{g}{\e}(x)=\dint_{\R^d}g(x-y)\zeta_\e(y)\,dy,\,\,\,f\in L^2(\R^d)
\end{equation}
where $\zeta_\e(y)=\e^{-d}\zeta(y/\e)$.  Note $K_\e$ is a continuous map from $L^2(\R^d)$ to $L^2(\R^d)$.  A proof for each of the following two lemmas is readily available in~\cite{shen}, and so we do not present either here.  For any function $g$, set $g^\e(\cdot)=g(\cdot/\e)$.

\begin{lemm}\label{sixteen}
Let $g\in H^1(\R^d)$.  Then
\[
\|g-\smoothone{g}{\e}\|_{L^2(\R^d)}\leq C\e\|\nabla g\|_{L^2(\R^d)},
\]
where $C$ depends only on $d$.
\end{lemm}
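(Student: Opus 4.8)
The plan is to reduce to the case $g\in C_0^\infty(\R^d)$ by density --- recall $K_\e$ is bounded on $L^2(\R^d)$, so both sides of the asserted inequality are continuous in the $H^1(\R^d)$-norm --- and then to exploit that $\zeta_\e$ is a probability density supported in $B(0,\e)$.

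First I would use $\int_{\R^d}\zeta_\e=1$ to write
\[
g(x)-\smoothone{g}{\e}(x)=\dint_{\R^d}\bigl(g(x)-g(x-y)\bigr)\zeta_\e(y)\,dy.
\]
For smooth $g$, the fundamental theorem of calculus gives $g(x)-g(x-y)=\int_0^1 y\cdot\nabla g(x-ty)\,dt$, hence
\[
g(x)-\smoothone{g}{\e}(x)=\dint_{\R^d}\dint_0^1 y\cdot\nabla g(x-ty)\,dt\,\zeta_\e(y)\,dy.
\]
Taking the $L^2$-norm in $x$, applying Minkowski's integral inequality, and then using the translation invariance of Lebesgue measure (so that $\|\nabla g(\cdot-ty)\|_{L^2(\R^d)}=\|\nabla g\|_{L^2(\R^d)}$ for each fixed $t$ and $y$) yields
\[
\|g-\smoothone{g}{\e}\|_{L^2(\R^d)}\leq\Bigl(\dint_{\R^d}|y|\,\zeta_\e(y)\,dy\Bigr)\|\nabla g\|_{L^2(\R^d)}.
\]
Since $\zeta_\e$ is supported in $B(0,\e)$ and integrates to $1$, the prefactor is at most $\e$, which gives the estimate with $C=1$ for smooth $g$; the general case follows by density.

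Alternatively, one can work on the Fourier side: $\widehat{g-\smoothone{g}{\e}}(\xi)=\hat g(\xi)\bigl(1-\hat\zeta(\e\xi)\bigr)$, and since $\hat\zeta(0)=1$ with $\nabla\hat\zeta$ bounded (because $\int_{\R^d}|y|\,|\zeta(y)|\,dy<\infty$), the mean value theorem gives $|1-\hat\zeta(\e\xi)|\leq C\e|\xi|$; Plancherel's theorem then finishes the argument, with $C$ depending only on $d$ through the fixed choice of $\zeta$. In either approach there is essentially no genuine obstacle; the only point requiring a moment of care is the passage from $C_0^\infty$ to general $H^1$ functions, which is routine given the $L^2$-boundedness of $K_\e$ recorded after~\eqref{eighteen}.
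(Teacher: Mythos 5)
Your proof is correct, and it fills in what the paper itself omits: the paper does not prove Lemma~\ref{sixteen} but defers to~\cite{shen}, where the argument is exactly of the type you give (the Plancherel computation, or equivalently the real-variable estimate via the fundamental theorem of calculus and Minkowski's integral inequality). Both of your variants are sound, including the density reduction and the observation that the constant depends only on $d$ through the fixed choice of $\zeta$.
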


\begin{lemm}\label{fifteen}
Let $h\in L_{\text{loc}}^2(\R^d)$ be a 1-periodic function.  Then for any $g\in L^2(\R^d)$,
\[
\|h^\e\smoothone{g}{\e}\|_{L^2(\R^d)}\leq C\|h\|_{L^2(Q)}\|g\|_{L^2(\R^d)}
\]
\end{lemm}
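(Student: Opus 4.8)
The plan is to localize the inequality to the $\e$-periodicity cells and to exploit the periodicity of $h$ together with a crude pointwise bound on the mollified function $\smoothone{g}{\e}$. First, decompose $\R^d$ (up to a null set) into the cubes $\e(Q+z)$, $z\in\mathbb{Z}^d$, which tile the space. This decomposition is useful because, since $|h|^2$ is $1$-periodic, the change of variables $x=\e w$ gives
\[
\int_{\e(Q+z)}|h(x/\e)|^2\,dx=\e^d\int_{Q+z}|h(w)|^2\,dw=\e^d\|h\|_{L^2(Q)}^2,
\]
with the right-hand side independent of the cell $z$.

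Next I would record the elementary pointwise bound on the smoothing operator. Since $\zeta_\e$ is supported in $B(0,\e)$ and $\|\zeta_\e\|_{L^2(\R^d)}^2=\e^{-d}\|\zeta\|_{L^2(\R^d)}^2$, the Cauchy--Schwarz inequality yields
\[
|\smoothone{g}{\e}(x)|^2=\left|\int_{B(0,\e)}g(x-y)\zeta_\e(y)\,dy\right|^2\leq\left(\int_{B(0,\e)}|\zeta_\e(y)|^2\,dy\right)\int_{B(x,\e)}|g(z)|^2\,dz=C\e^{-d}\|g\|_{L^2(B(x,\e))}^2,
\]
where $C=\|\zeta\|_{L^2(\R^d)}^2$ depends only on $d$ ($\zeta$ being fixed once and for all). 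Multiplying by $|h(x/\e)|^2$, integrating in $x$ over $\R^d$, writing $\|g\|_{L^2(B(x,\e))}^2=\int_{\R^d}|g(y)|^2\one_{B(x,\e)}(y)\,dy$, and applying Fubini, I obtain
\[
\int_{\R^d}|h^\e\,\smoothone{g}{\e}|^2\,dx\leq C\e^{-d}\int_{\R^d}|g(y)|^2\left(\int_{B(y,\e)}|h(x/\e)|^2\,dx\right)dy.
\]
Since $B(y,\e)$ meets at most $C_d$ of the cells $\e(Q+z)$ and is therefore contained in their union, the cell identity above gives $\int_{B(y,\e)}|h(x/\e)|^2\,dx\leq C_d\,\e^d\|h\|_{L^2(Q)}^2$; inserting this, the powers $\e^{-d}$ and $\e^{d}$ cancel, and we are left with $\int_{\R^d}|h^\e\,\smoothone{g}{\e}|^2\leq C\|h\|_{L^2(Q)}^2\|g\|_{L^2(\R^d)}^2$, which is the asserted bound.

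There is no real obstacle here; the statement is soft and, as the excerpt notes, a proof is given in~\cite{shen}. The only points requiring a little care are the exact bookkeeping of the scaling constants, so that the factor $\e^{-d}$ coming from $\|\zeta_\e\|_{L^2}^2$ and the factor $\e^{d}$ coming from the volume of an $\e$-cell cancel to leave a constant independent of $\e$, and the elementary covering estimate showing that an $\e$-ball overlaps only boundedly many $\e$-cells, so that the $y$-integral (equivalently, a summation over $z\in\mathbb{Z}^d$) contributes only a dimensional constant rather than accumulating. Note that the perforation geometry plays no role whatsoever in this lemma: it is a statement purely about a $1$-periodic function and a standard mollifier on all of $\R^d$.
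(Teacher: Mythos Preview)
Your argument is correct: the Cauchy--Schwarz bound on the mollifier, Fubini, and the cell-counting estimate combine exactly as you describe, and the $\e^{-d}$/$\e^{d}$ cancellation is handled cleanly. Note, however, that the paper does not supply its own proof of this lemma---it explicitly states that a proof is available in~\cite{shen} and omits the details---so there is no in-paper argument to compare against. Your proof is one of the standard routes to this estimate and would be entirely appropriate here.
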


A proof of Lemma~\ref{sixtynine} can be found in~\cite{book2}.

\begin{lemm}\label{sixtynine}
Let $\Omega\subset\R^d$ be a bounded Lipschitz domain.  For any $g\in H^1(\Omega)$,
\[
\|g\|_{L^2(\mathcal{O}_r)}\leq Cr^{1/2}\|g\|_{H^1(\Omega)},
\]
where $C$ depends on $d$ and $\Omega$, and $\mathcal{O}_{r}=\{x\in\Omega\,:\,\text{dist}(x,\dd\Omega)<r\}$.
\end{lemm}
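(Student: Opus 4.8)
\textbf{Proof proposal for Theorem~\ref{ten}.}

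The plan is to run the standard energy (Caccioppoli/corrector) argument for homogenization convergence rates, but adapted to the perforated setting and using the smoothed corrector ansatz. Set $w_\e = u_\e - u_0 - \e\chi^\e\smoothtwo{(\nabla u_0)\eta_\e}{\e}$, viewed as a function on $\Omega_\e$. The goal is to show $w_\e$ satisfies an equation of the form $\mathcal{L}_\e(w_\e) = \text{div}(F_\e)$ in $\Omega_\e$ together with the Neumann condition $\sigma_\e(w_\e) = g_\e\cdot n$ on $S_\e$, where $F_\e, g_\e$ are error terms controllable by $C\e^{1/2}\|f\|_{H^1(\dd\Omega)}$ in the appropriate norms, and where near $\Gamma_\e$ the cut-off $\eta_\e$ forces $w_\e = u_\e - f$-compatible boundary data (the term $\e\chi^\e\smoothtwo{(\nabla u_0)\eta_\e}{\e}$ vanishes on $\Gamma_\e$ since $\eta_\e$ does). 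First I would recall from Section~\ref{section2} (and~\cite{book2}) the corrector equation~\eqref{nine} and the formula for the homogenized coefficients, and write the ``flux corrector'' identity: the matrix $\widehat{A} - A(y) - A(y)\nabla_y\chi(y)$ is divergence-free, hence equals $\text{div}$ of a skew-symmetric matrix $B(y)$ (the dual/flux corrector), 1-periodic, in $L^2(Q\cap\omega)$. This is the key algebraic input that lets one absorb the leading $O(1)$ discrepancy into a divergence form.

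Next I would compute $A^\e\nabla w_\e$ termwise. The main cancellation is $A^\e\nabla u_\e - \widehat A\nabla u_0 - [A^\e(\nabla\chi)^\e]\smoothtwo{(\nabla u_0)\eta_\e}{\e} - O(\e)$ terms; using $\mathcal{L}_\e(u_\e)=0$, $\widehat{\mathcal{L}}(u_0) = 0$ in $\Omega$, and the flux-corrector identity to rewrite $(\widehat A - A^\e - A^\e(\nabla\chi)^\e)$ as $\e\,\text{div}(B^\e)$, the leading terms combine into a divergence of quantities that are either (i) $O(\e)$ times an $L^2$ function, or (ii) of the form (periodic $L^2$ function)$^\e$ times $(\nabla u_0 - K_\e^2((\nabla u_0)\eta_\e))$, or (iii) $\e\,B^\e$ times $\nabla(K_\e^2((\nabla u_0)\eta_\e))$. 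Here is where the smoothing operator earns its keep: by Lemma~\ref{sixteen} the quantity $\nabla u_0 - K_\e(\nabla u_0)$ is $O(\e)$ in $L^2$, while $K_\e^2((\nabla u_0)\eta_\e) - K_\e(\nabla u_0)$ is supported near $\dd\Omega$ and controlled via Lemma~\ref{sixtynine}; and terms like (periodic)$^\e \cdot K_\e(\text{something})$ are bounded in $L^2$ by Lemma~\ref{fifteen} \emph{without} needing $\nabla\chi\in L^\infty$. The cut-off $\eta_\e$ (of width $\sim\e$, by~\eqref{eleven}) and Lemma~\ref{sixtynine} are what convert the boundary-layer contributions into the $\e^{1/2}$ rate.

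Then I would test the equation for $w_\e$ against $w_\e$ itself (legitimate since $w_\e\in H^1(\Omega_\e,\Gamma_\e;\R^d)$ by the boundary remark above), apply Korn's first inequality for perforated domains (Lemma~\ref{fortysix}) and ellipticity~\eqref{three} to bound $\|\nabla w_\e\|_{L^2(\Omega_\e)}^2$ from below by $c\|\nabla w_\e\|_{L^2(\Omega_\e)}^2$, and on the right-hand side pair each error term against $\nabla w_\e$ or $w_\e$ using Cauchy--Schwarz, absorbing the $\|\nabla w_\e\|_{L^2}$ factors. This yields $\|\nabla w_\e\|_{L^2(\Omega_\e)}\le C(\e^{1/2}\|u_0\|_{H^2(\Omega)} + (\text{boundary-layer terms}))$; a Poincar\'e-type inequality in $H^1(\Omega_\e,\Gamma_\e;\R^d)$ promotes this to the full $H^1$ norm of $w_\e$. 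Finally I would invoke the elliptic regularity estimate $\|u_0\|_{H^2(\Omega)}\le C\|f\|_{H^1(\dd\Omega)}$ for the homogenized Neumann-type problem on the Lipschitz domain $\Omega$ (this replaces the $\|u_0\|_{H^3}$ of the cited $W^{1,\infty}$-corrector result and is the reason only $H^1$ data is needed) to reach the stated bound.

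I expect the main obstacle to be the careful bookkeeping of the boundary-layer terms: near $\dd\Omega$ the correctors are not small, the cut-off $\eta_\e$ and its gradient (size $\sim\e^{-1}$ on a shell of width $\sim\e$) generate several terms, and one must check that every one of them, after using Lemma~\ref{sixtynine} with $r\sim\e$ and the boundedness of $K_\e$ and of $\chi^\e$-type factors via Lemma~\ref{fifteen}, contributes at most $C\e^{1/2}\|f\|_{H^1(\dd\Omega)}$ — in particular that no term is merely $O(\e^{1/2})\|u_0\|_{H^2}$ with a constant that secretly blows up with the perforation geometry. The perforation also demands care that all the cited $L^2$-smoothing estimates (Lemmas~\ref{sixteen},~\ref{fifteen}) are applied after extending functions from $\Omega_\e$/$\omega$ to $\R^d$ or $Q$ appropriately, using the uniform extension operators available for periodically perforated Lipschitz domains.
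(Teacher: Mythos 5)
Your proposal does not address the statement at hand. The statement to be proved is Lemma~\ref{sixtynine}, the elementary boundary-layer estimate $\|g\|_{L^2(\mathcal{O}_r)}\leq Cr^{1/2}\|g\|_{H^1(\Omega)}$ for a bounded Lipschitz domain $\Omega$, where $\mathcal{O}_r=\{x\in\Omega\,:\,\text{dist}(x,\dd\Omega)<r\}$. What you have written is instead a proof sketch of Theorem~\ref{ten}, the $H^1(\Omega_\e)$ convergence rate; indeed your argument \emph{invokes} Lemma~\ref{sixtynine} as one of its tools, so read as a proof of that lemma it would be circular, and read literally it simply proves a different result. A proof of the lemma itself has nothing to do with correctors, smoothing operators, or perforations: after a partition of unity and a bi-Lipschitz flattening of the boundary, one writes, for a point $x=(x',x_d)$ in the layer above a Lipschitz graph, $g(x',x_d)=g(x',x_d+t)-\int_0^t \dd_d\, g(x',x_d+s)\,ds$, averages in $t$ over an interval of length comparable to $\text{diam}(\Omega)$, and applies Cauchy--Schwarz and Fubini; integrating over the layer of width $r$ produces the factor $r^{1/2}$ in front of $\|g\|_{H^1(\Omega)}$. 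Equivalently one can interpolate the trace-type inequality along the level sets $\{\text{dist}(x,\dd\Omega)=s\}$, $0<s<r$, via the coarea formula. The paper does not reproduce such an argument but cites~\cite{book2} for it.

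As a side remark, even viewed as a sketch of Theorem~\ref{ten} your outline diverges from the paper in one substantive way: you propose to close the estimate with $\|u_0\|_{H^2(\Omega)}\leq C\|f\|_{H^1(\dd\Omega)}$, but $u_0$ solves the \emph{Dirichlet} problem~\eqref{seven} in a merely Lipschitz domain, where such $H^2$ regularity is not available in general. The paper instead controls the boundary-layer and second-derivative terms through the nontangential maximal function estimate $\|(\nabla u_0)^*\|_{L^2(\dd\Omega)}\leq C\|f\|_{H^1(\dd\Omega)}$ combined with interior estimates and the coarea formula (see the proof of Lemma~\ref{twentyseven}); that substitution is what makes the rate $\e^{1/2}\|f\|_{H^1(\dd\Omega)}$ attainable under the stated hypotheses.
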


A proof of Lemma~\ref{fourteen} can be found in~\cite{yellowbook}.  

\begin{lemm}\label{fourteen}
Suppose $B=\{b_{ij}^{\a\b}\}_{1\leq i,j,\a,\b\leq d}$ is 1-periodic and satisfies $b_{ij}^{\a\b}\in L_{\text{loc}}^2(\R^d)$ with
\[
\dfrac{\dd}{\dd y_i}b_{ij}^{\a\b}=0,\,\,\,\text{ and }\,\,\,\dint_Q b_{ij}^{\a\b}=0.
\]
There exists $\pi=\{\pi_{kij}^{\a\b}\}_{1\leq i,j,k,\a,\b\leq d}$ with $\pi_{kij}^{\a\b}\in H^1_{\text{loc}}(\R^d)$ that is 1-periodic and satisfies
\[
\dfrac{\dd}{\dd y_k}\pi_{kij}^{\a\b}=b_{ij}^{\a\b}\,\,\,\text{ and }\,\,\,\pi_{kij}^{\a\b}=-\pi_{ikj}^{\a\b}.
\]
\end{lemm}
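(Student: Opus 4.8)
The plan is to use the classical ``antisymmetrized second potential'' construction. Fix the indices $j,\a,\b$ for the moment; they play no active role in what follows. Each $b_{ij}^{\a\b}$ is 1-periodic, lies in $L^2_{\text{loc}}(\R^d)$, and satisfies $\dint_Q b_{ij}^{\a\b}=0$, so I would first solve the periodic Poisson problem
\[
\Delta f_{ij}^{\a\b}=b_{ij}^{\a\b}\qquad\text{in }\R^d
\]
for a 1-periodic $f_{ij}^{\a\b}$ with $\dint_Q f_{ij}^{\a\b}=0$. Existence and uniqueness follow from the Lax--Milgram theorem on the Hilbert space of mean-zero 1-periodic $H^1$ functions (equivalently, from a Fourier-series computation, in which the hypothesis $\dint_Q b_{ij}^{\a\b}=0$ is exactly what removes the zero-frequency obstruction). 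Interior elliptic regularity then upgrades this to $f_{ij}^{\a\b}\in H^2_{\text{loc}}(\R^d)$, and since $f_{ij}^{\a\b}$ is itself periodic, so are all of its derivatives.

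Next I would set
\[
\pi_{kij}^{\a\b}=\ddd{f_{ij}^{\a\b}}{y_k}-\ddd{f_{kj}^{\a\b}}{y_i},
\]
which lies in $H^1_{\text{loc}}(\R^d)$, is 1-periodic, and visibly satisfies the antisymmetry $\pi_{kij}^{\a\b}=-\pi_{ikj}^{\a\b}$ by construction. The only nontrivial point is the divergence identity. Differentiating and summing on $k$,
\[
\ddd{}{y_k}\pi_{kij}^{\a\b}=\Delta f_{ij}^{\a\b}-\ddd{}{y_i}\left(\ddd{f_{kj}^{\a\b}}{y_k}\right)=b_{ij}^{\a\b}-\ddd{}{y_i}\left(\ddd{f_{kj}^{\a\b}}{y_k}\right),
\]
so it suffices to show that the scalar $g:=\dd f_{kj}^{\a\b}/\dd y_k$ (sum on $k$) is constant. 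This is where the second hypothesis on $B$ enters: $g$ is 1-periodic and
\[
\Delta g=\ddd{}{y_k}\left(\Delta f_{kj}^{\a\b}\right)=\ddd{}{y_k}b_{kj}^{\a\b}=0,
\]
using $\dd b_{ij}^{\a\b}/\dd y_i=0$ after relabeling the summed index $i\mapsto k$. A 1-periodic weakly harmonic function is constant (Weyl's lemma together with Liouville, or once more Fourier series), hence $\dd g/\dd y_i=0$ and the identity $\dd\pi_{kij}^{\a\b}/\dd y_k=b_{ij}^{\a\b}$ holds. Running the argument for each choice of the frozen indices $j,\a,\b$ completes the construction.

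I do not expect a genuine obstacle here, as the statement is a standard ingredient in periodic homogenization. The only places that need a little care are the distributional justification of the differentiations above, which is unproblematic once $f_{ij}^{\a\b}\in H^2_{\text{loc}}$ is in hand, and the appeal to the Liouville/Weyl property for the periodic harmonic function $g$; both are routine. If one preferred to sidestep elliptic regularity entirely, one could carry out the whole construction on the Fourier side, defining $\widehat{\pi}_{kij}^{\a\b}$ directly in terms of $\widehat{b}_{ij}^{\a\b}$ and reading off periodicity, the $H^1_{\text{loc}}$ bound, the antisymmetry, and the divergence identity from the resulting multiplier formulas.
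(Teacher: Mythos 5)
Your construction is correct and is essentially the classical one: the paper does not prove Lemma~\ref{fourteen} itself but defers to the reference it cites, where the standard argument is exactly this---solve the periodic cell problem $\Delta f_{ij}^{\a\b}=b_{ij}^{\a\b}$ and take the antisymmetrized gradient $\pi_{kij}^{\a\b}=\dd_k f_{ij}^{\a\b}-\dd_i f_{kj}^{\a\b}$, using that $\dd_k f_{kj}^{\a\b}$ is periodic and harmonic, hence constant. Your handling of the mean-zero condition, the $H^2_{\text{loc}}$ regularity, and the Weyl/Liouville step is accurate, so there is nothing to add.
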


Theorem~\ref{thirteen} is a classical result in the study of periodically perforated domains.  It can be used to prove Korn's first inequality in perforated domains (see Lemma~\ref{fortysix}), which is needed together with the Lax-Milgram theorem to prove the existence and uniqueness of solutions to~\eqref{five}.  For a proof of Theorem~\ref{thirteen}, see~\cite{book2}.

\begin{thmm}\label{thirteen}
Let $\Omega$ and $\Omega_0$ be a bounded Lipschitz domains with $\overline{\Omega}\subset\Omega_0$ and $\text{dist}(\dd\Omega_0,\Omega)>1$.  For $0<\e<1$, there exists a linear extension operator $P_\e: H^1(\Omega_\e,\Gamma_\e;\R^d)\to H_0^1(\Omega_0;\R^d)$ such that 
\begin{align}
&\|P_\e w\|_{H^1(\Omega_0)}\leq C_1\|w\|_{H^1(\Omega_\e)}, \label{thirtyfive}\\
&\|\nabla P_\e w\|_{L^2(\Omega_0)}\leq C_2\|\nabla w\|_{L^2(\Omega_\e)}, \label{thirtyeight}\\
&\|e(P_\e w)\|_{L^2(\Omega_0)}\leq C_3\|e(w)\|_{L^2(\Omega_\e)},
\end{align}
for some constants $C_1$, $C_2$, and $C_3$ depending on $\Omega$ and $\omega$, where $e(w)$ denotes the symmetric part of $\nabla w$, i.e.,
\begin{equation}\label{fortyseven}
e(w)=\dfrac{1}{2}\left[\nabla w+(\nabla w)^T\right].
\end{equation}
\end{thmm}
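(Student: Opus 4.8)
The plan is to carry out the classical cell-by-cell extension for periodically perforated domains, but arranged so that the \emph{unmixed} bounds --- the estimate~\eqref{thirtyeight} controlling $\nabla P_\e w$ by $\nabla w$ (not the full $H^1$ norm) and, similarly, $e(P_\e w)$ by $e(w)$ --- survive rescaling and assembly. As a preliminary reduction I would first move the problem away from $\dd\Omega$: since $w\in H^1(\Omega_\e,\Gamma_\e;\R^d)$ vanishes on $\Gamma_\e=\dd\Omega_\e\cap\dd\Omega$, and since near $\dd\Omega$ one has $\overline{\Omega_\e}\cap\dd\Omega=\Gamma_\e$, the zero extension $\tilde w:=w\,\one_\Omega$ across $\dd\Omega$ lies in $H^1$ of the enlarged solid set $\widehat\Omega_\e:=\e\omega\cap\widehat\Omega$, where $\widehat\Omega$ is the union of all $\e$-cubes $\e(Q+k)$, $k\in\mathbb Z^d$, meeting $\overline\Omega$ together with one extra layer of neighbouring cubes; since $\mathrm{dist}(\dd\Omega_0,\Omega)>1>\e\sqrt d$ we have $\overline{\widehat\Omega}\subset\Omega_0$. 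Zero extension changes none of the $\nabla$-, $e(\cdot)$- or $L^2$-norms, so it suffices to extend $\tilde w$ from $\widehat\Omega_\e$ to $H^1(\widehat\Omega;\R^d)$, vanishing on the outer cube layer and satisfying the three bounds relative to $\tilde w$; a final zero extension to $\Omega_0$ then produces the required map into $H_0^1(\Omega_0;\R^d)$.

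The heart of the argument is a single extension operator on a fixed reference perforated cell that realises all three bounds simultaneously. Let $Q^\ast=(-N/2,N/2)^d$ with $N\in\mathbb N$ chosen so that $D_f:=Q^\ast\cap\omega$ is a \emph{connected} bounded Lipschitz domain --- available because $\omega$ is connected, $1$-periodic, and the components of $\R^d\backslash\omega$ are bounded and mutually separated. Fix a bounded linear Sobolev extension $E\colon H^1(D_f;\R^d)\to H^1(2Q^\ast;\R^d)$, let $\mathcal R$ be the finite-dimensional space of rigid displacements $x\mapsto a+Bx$, $B=-B^T$, and let $\rho v\in\mathcal R$ be the $L^2(D_f)$-orthogonal projection of $v$ onto $\mathcal R$. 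The operator I would use is $\widetilde E v:=E(v-\rho v)+\rho v$: it is linear, it is an extension, and since $\rho v$ is affine with $e(\rho v)=0$ we get $\nabla\widetilde E v=\nabla E(v-\rho v)$ and $e(\widetilde E v)=e(E(v-\rho v))$; inserting the Korn--Poincar\'e inequality $\|v-\rho v\|_{H^1(D_f)}\le C\|e(v)\|_{L^2(D_f)}$ on the connected Lipschitz domain $D_f$ then yields $\|\widetilde E v\|_{H^1(2Q^\ast)}\le C\|v\|_{H^1(D_f)}$, $\|\nabla\widetilde E v\|_{L^2(2Q^\ast)}\le C\|e(v)\|_{L^2(D_f)}$, and $\|e(\widetilde E v)\|_{L^2(2Q^\ast)}\le C\|e(v)\|_{L^2(D_f)}$ at once, with $C=C(d,\omega)$.

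Next I would rescale $\widetilde E$ to each cube $\e(Q^\ast+k)$ via $x\mapsto\e x$ (setting $v_k(y)=v(\e(y+k))$ and $E_kv(x)=(\widetilde E v_k)(x/\e-k)$): the $\nabla$- and $e(\cdot)$-bounds are scale-invariant, so the $E_k$ inherit them with the \emph{same} constant, while the rescaled $L^2$-bound reads $\|E_k v\|_{L^2}\le C\|v\|_{L^2}+C\e\|\nabla v\|_{L^2}$ uniformly in $\e<1$. Then glue with a $1$-periodic partition of unity $\phi_k(x)=\phi_0(x/\e-k)$, $\phi_0\in C_0^\infty(2Q^\ast)$, $\sum_k\phi_k\equiv1$, $|\nabla\phi_k|\le C/\e$, putting $P_\e w:=\sum_k\phi_k E_k\tilde w$. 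Bounded overlap of the supports gives $P_\e w\in H^1(\widehat\Omega;\R^d)$; on $\widehat\Omega_\e$ each active $E_k\tilde w$ equals $\tilde w$, so $P_\e w=\tilde w$ there and $P_\e$ is a genuine extension; and $\tilde w\equiv0$ on the solid part of the outer layer forces $P_\e w\equiv0$ near $\dd\Omega_0$, giving membership in $H_0^1(\Omega_0;\R^d)$ after zero extension. For the derivative estimates, on a fixed cube write $\nabla P_\e w=\sum_k\phi_k\nabla E_k\tilde w+\sum_k(\nabla\phi_k)E_k\tilde w$; the first sum is controlled cubewise by the rescaled cell estimates and bounded overlap, and for the second, since $\sum_k\nabla\phi_k\equiv0$ on that cube one may subtract one rigid motion $r=\rho_V(\tilde w)$ adapted to the connected perforated multi-cube solid region $V$ spanned by the active indices, so that the rescaled Korn--Poincar\'e inequality gives $\|E_k\tilde w-r\|_{L^2}\le C\e\|e(\tilde w)\|_{L^2(V)}$ --- cancelling the $\e^{-1}$ in $\nabla\phi_k$. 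The identical computation applied to $e(P_\e w)$ (again using $e(r)=0$) yields the last estimate, and collecting the cubewise bounds, recalling $\|\nabla\tilde w\|_{L^2(\widehat\Omega_\e)}=\|\nabla w\|_{L^2(\Omega_\e)}$, $\|e(\tilde w)\|=\|e(w)\|$, $\|e(w)\|\le\|\nabla w\|$, and adding the $L^2$-bound on $P_\e w$ from the rescaled $L^2$-estimate, gives~\eqref{thirtyfive},~\eqref{thirtyeight}, and the $e(\cdot)$-bound with constants depending only on $d$, $\omega$, $\Omega$.

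I expect the main obstacle to be precisely the propagation of the \emph{unmixed} estimates through the partition-of-unity gluing: the naive bound on $\sum_k(\nabla\phi_k)E_k\tilde w$ loses a factor $\e^{-1}$, and recovering it needs the telescoping identity $\sum_k\nabla\phi_k\equiv0$ together with a Korn--Poincar\'e inequality on a \emph{connected} union of perforated cells --- which in turn forces one to extract from the standing hypotheses on $\omega$ (connectedness, and boundedness plus mutual separation of the components of $\R^d\backslash\omega$) the fact that the solid part of a sufficiently large cube, and of small unions of such cubes, is connected and Lipschitz. The remaining difficulty is purely bookkeeping near $\dd\Omega$, disposed of at the outset through the zero extension using the vanishing of $w$ on $\Gamma_\e$.
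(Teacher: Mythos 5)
The paper does not actually prove Theorem~\ref{thirteen}; it cites~\cite{book2}, and your cell-by-cell construction (extend on a reference perforated cell after subtracting the rigid projection, rescale, glue with a partition of unity, and recover the unmixed bounds by subtracting a common rigid motion against $\sum_k\nabla\phi_k\equiv 0$) is exactly the classical route taken there. However, one step as you state it is false. From $\widetilde E v=E(v-\rho v)+\rho v$ you claim $\nabla\widetilde E v=\nabla E(v-\rho v)$ and hence $\|\nabla\widetilde E v\|_{L^2(2Q^\ast)}\le C\|e(v)\|_{L^2(D_f)}$. But $\rho v(x)=a+Bx$ with $B$ skew and in general $B\neq 0$, so $\nabla\widetilde E v=\nabla E(v-\rho v)+B$; taking $v(x)=Bx$ a pure infinitesimal rotation gives $e(v)=0$ while $\|\nabla\widetilde E v\|_{L^2(2Q^\ast)}=|B|\,|2Q^\ast|^{1/2}>0$, so no bound of $\nabla\widetilde E v$ by $e(v)$ alone can hold at the cell level, where no boundary condition excludes rotations. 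The correct unmixed trio is $\|\widetilde Ev\|_{H^1}\le C\|v\|_{H^1(D_f)}$, $\|\nabla\widetilde Ev\|_{L^2}\le C\|\nabla v\|_{L^2(D_f)}$, and $\|e(\widetilde Ev)\|_{L^2}\le C\|e(v)\|_{L^2(D_f)}$: the last is fine since $e(\rho v)=0$, and the middle one follows because the projection is orthogonal to constants, so its rotation coefficients are computed against $v-\bar v$ and $|B|\le C\|v-\bar v\|_{L^2(D_f)}\le C\|\nabla v\|_{L^2(D_f)}$ by Poincar\'e, combined with Korn--Poincar\'e for $v-\rho v$. With this correction the gluing should be run twice: subtracting a mean value (Poincar\'e) to get~\eqref{thirtyeight}, and subtracting the rigid projection (Korn--Poincar\'e, using also that $\widetilde E r=r$ for rigid $r$, which you use implicitly) to get the $e(\cdot)$-estimate; as written, your deduction of~\eqref{thirtyeight} via $\|e(w)\|\le\|\nabla w\|$ leans on the false cell bound.

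Two further points should not be waved away. First, choosing a large reference cube $Q^\ast$ plus an extra layer of cells makes the support of $P_\e w$ spread by a length of order $N\e$, which for $\e$ close to $1$ need not fit inside $\Omega_0$ under the stated hypothesis $\text{dist}(\dd\Omega_0,\Omega)>1$ (your inequality $1>\e\sqrt d$ also fails for $\e$ near $1$ and $d\ge 2$); the regime where $\e$ is bounded below is easy, but it must be treated separately rather than absorbed silently. Second, the facts that $Q^\ast\cap\omega$ can be taken connected and Lipschitz, and that the unions $V$ of neighboring perforated cells used in the Korn--Poincar\'e step are connected with a constant uniform over the finitely many periodic configurations, are genuine (if standard) geometric lemmas that your argument relies on; they should be derived from~\eqref{sixtysix}, the boundedness of the holes, and the connectedness of $\omega$, not just asserted.
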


Korn's inequalities are classical in the study of linear elasticity.  The following lemma is essentially Korn's first inequality but formatted for periodically perforated domains.  Lemma~\ref{fortysix} follows from Theorem~\ref{thirteen} and Korn's first inequality.  For an explicit proof of Lemma~\ref{fortysix}, see~\cite{book2}.

\begin{lemm}\label{fortysix}
There exists a constant $C$ independent of $\e$ such that
\[
\|w\|_{H^1(\Omega_\e)}\leq C\|e(w)\|_{L^2(\Omega_\e)}
\]
for any $w\in H^1(\Omega_\e,\Gamma_\e;\R^d)$, where $e(w)$ is given by~\eqref{fortyseven}.
\end{lemm}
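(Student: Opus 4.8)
The plan is to deduce Lemma~\ref{fortysix} from the classical Korn's first inequality on a \emph{fixed} (hence $\e$-independent) domain by transporting the inequality off the perforated domain $\Omega_\e$ via the extension operator $P_\e$ supplied by Theorem~\ref{thirteen}. First I fix, once and for all, a bounded Lipschitz domain $\Omega_0$ with $\overline{\Omega}\subset\Omega_0$ and $\text{dist}(\dd\Omega_0,\Omega)>1$, so that Theorem~\ref{thirteen} is applicable for every $0<\e<1$. The crucial structural point is that $\Omega_0$ is chosen independently of $\e$; consequently every constant attached to $\Omega_0$ — in particular its Korn constant and the constants $C_1,C_2,C_3$ of Theorem~\ref{thirteen}, which depend only on $\Omega$ and $\omega$ — is uniform in $\e$.

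The argument is then a short chain of three inequalities. Given $w\in H^1(\Omega_\e,\Gamma_\e;\R^d)$, set $W=P_\e w\in H_0^1(\Omega_0;\R^d)$. Since $P_\e$ is an extension operator we have $W=w$ on $\Omega_\e$, whence $\|w\|_{H^1(\Omega_\e)}\leq\|W\|_{H^1(\Omega_0)}$. Because $W$ vanishes on $\dd\Omega_0$, the classical Korn's first inequality on the fixed bounded Lipschitz domain $\Omega_0$ yields a constant $C_0=C_0(d,\Omega_0)$ with $\|W\|_{H^1(\Omega_0)}\leq C_0\|e(W)\|_{L^2(\Omega_0)}$. (Here one may even use that for $H_0^1$-functions the symmetric-gradient bound $\|\nabla W\|_{L^2}\leq\sqrt{2}\,\|e(W)\|_{L^2}$ holds on any bounded open set by integration by parts, combined with the Poincar\'e inequality on $\Omega_0$.) Finally, the third estimate in Theorem~\ref{thirteen} controls the symmetric gradient of the extension, $\|e(P_\e w)\|_{L^2(\Omega_0)}\leq C_3\|e(w)\|_{L^2(\Omega_\e)}$. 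Concatenating the three displays gives
\[
\|w\|_{H^1(\Omega_\e)}\leq\|P_\e w\|_{H^1(\Omega_0)}\leq C_0\|e(P_\e w)\|_{L^2(\Omega_0)}\leq C_0C_3\|e(w)\|_{L^2(\Omega_\e)},
\]
so $C:=C_0C_3$ works and is independent of $\e$.

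The proof as sketched covers the range $0<\e<1$, which is the only regime relevant to the homogenization problem; for $\e\geq 1$ one simply restricts attention to small $\e$ (as is implicit throughout) or observes that $\Omega_\e$ then meets only boundedly many perforation cells, on which Korn's inequality applies directly. The only genuine point demanding care is the $\e$-uniformity of the constants, and this is precisely what Theorem~\ref{thirteen} delivers: the extension operator $P_\e$ is constructed cell-by-cell on the $\e$-periodic structure so that $C_1,C_2,C_3$ depend only on $\Omega$ and $\omega$. One should also confirm that $P_\e$ is truly an extension, i.e. $(P_\e w)|_{\Omega_\e}=w$, which justifies the first inequality above; this is part of the construction in Theorem~\ref{thirteen}. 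Beyond these bookkeeping checks the argument is a two-line reduction and presents no substantive obstacle.
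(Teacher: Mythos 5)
Your proposal is correct and follows exactly the route the paper indicates: the paper does not spell out a proof but states that Lemma~\ref{fortysix} follows from the extension operator of Theorem~\ref{thirteen} (in particular the symmetric-gradient bound) combined with the classical Korn first inequality on a fixed domain, citing~\cite{book2} for details. Your chain $\|w\|_{H^1(\Omega_\e)}\leq\|P_\e w\|_{H^1(\Omega_0)}\leq C_0\|e(P_\e w)\|_{L^2(\Omega_0)}\leq C_0C_3\|e(w)\|_{L^2(\Omega_\e)}$, with the $\e$-uniformity coming from the fixed choice of $\Omega_0$ and the $\e$-independent constants of Theorem~\ref{thirteen}, is precisely that argument.
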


If $\omega=\R^d$, it can be shown that the weak solution to~\eqref{five} converges weakly in $H^1(\Omega;\R^d)$ and consequently strongly in $L^2(\Omega;\R^d)$ as $\e\to 0$ to some $u_0$, which is a solution of a boundary value problem in the domain $\Omega$ (see~\cite{book1} or~\cite{yellowbook}).  Indeed, we have the following known qualitative convergence.

\begin{thmm}\label{seventyone}
Suppose $\omega=\R^d$ and that $\Omega$ is a bounded Lipschitz domain.  Suppose $A$ satisfies ~\eqref{two},~\eqref{three}, and~\eqref{fiftyseven}.  Let $u_\e$ satisfy $\mathcal{L}_\e(u_\e)=0$ in $\Omega$, and $u_\e=f$ on $\dd\Omega$.  Then there exists a $u_0\in H^1(\Omega;\R^d)$ such that
\[
u_\e\rightharpoonup u_0\,\,\,\text{ weakly in }H^1(\Omega;\R^d).
\]
Consequently, $u_\e\to u_0$ strongly in $L^2(\Omega;\R^d)$.
\end{thmm}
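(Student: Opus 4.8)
This is a classical qualitative homogenization result (see~\cite{book1},~\cite{yellowbook}); the plan is to carry out Tartar's method of oscillating test functions. Since $\omega=\R^d$ we have $\Omega_\e=\Omega$, $S_\e=\emptyset$, and $\Gamma_\e=\dd\Omega$, so $u_\e$ is the weak solution of the Dirichlet problem $\mathcal{L}_\e(u_\e)=0$ in $\Omega$ with $u_\e=f$ on $\dd\Omega$, and the admissible test functions in~\eqref{six} are exactly those in $H^1_0(\Omega;\R^d)$.

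\emph{A priori bound and weak compactness.} First I would test~\eqref{six} against $w=u_\e-f$; using the symmetries~\eqref{two} to rewrite the quadratic form in terms of $e(u_\e)$, the ellipticity~\eqref{three}, Cauchy--Schwarz, and Korn's first inequality (Lemma~\ref{fortysix}, which for $\omega=\R^d$ is the classical Korn inequality on $H^1_0$), one obtains $\|u_\e\|_{H^1(\Omega)}\leq C\|f\|_{H^1(\Omega)}$ with $C$ independent of $\e$. In particular the flux $\sigma_\e:=A^\e\nabla u_\e$ is bounded in $L^2(\Omega;\R^{d\times d})$ and $\text{div}\,\sigma_\e=0$ in $\Omega$. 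By reflexivity, along a subsequence (not relabeled) $u_\e\rightharpoonup u_0$ weakly in $H^1(\Omega;\R^d)$ and $\sigma_\e\rightharpoonup\sigma_0$ weakly in $L^2$; since $H^1_0(\Omega;\R^d)$ is weakly closed, $u_0-f\in H^1_0(\Omega;\R^d)$, and passing to the limit in the distributional identity gives $\text{div}\,\sigma_0=0$.

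\emph{Identification of the limit.} For each pair $(j,\b)$ I would introduce the oscillating test function $w_\e^{j\b}(x)=x_je_\b+\e\chi_j^\b(x/\e)$, where $e_\b$ is the $\b$-th standard basis vector of $\R^d$ and $\chi_j^\b\in H^1_{\text{per}}(\R^d;\R^d)$ is the corrector of~\eqref{nine}. This function satisfies $\mathcal{L}_\e(w_\e^{j\b})=0$ in $\R^d$ weakly; $\nabla w_\e^{j\b}$ is bounded in $L^2_{\text{loc}}$; $w_\e^{j\b}\to x_je_\b$ strongly in $L^2_{\text{loc}}$ (because $\|\e\chi^\e\|_{L^2(K)}=O(\e)$ on each bounded $K$ by periodicity of $\chi$); and $A^\e\nabla w_\e^{j\b}$ converges weakly in $L^2_{\text{loc}}$ to its mean value $\widehat A\nabla(x_je_\b)$, where $\widehat A$ denotes the (constant, elliptic, symmetric) matrix of homogenized coefficients. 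Fix $\varphi\in C_0^\infty(\Omega)$. Using $\varphi w_\e^{j\b}$ as a test function in the equation for $u_\e$ and $\varphi u_\e$ as a test function in the equation for $w_\e^{j\b}$, the symmetry $a_{ij}^{\a\b}=a_{ji}^{\b\a}$ from~\eqref{two} makes the two $\varphi$-weighted volume integrals coincide, so subtracting the two identities leaves
\[
\int_\Omega\sigma_\e\cdot\left(w_\e^{j\b}\otimes\nabla\varphi\right)=\int_\Omega\left(A^\e\nabla w_\e^{j\b}\right)\cdot\left(u_\e\otimes\nabla\varphi\right).
\]
On each side a weakly convergent sequence is paired against a strongly convergent one (using Rellich--Kondrachov for $u_\e\to u_0$ in $L^2$), so I may pass to the limit and then integrate by parts --- using $\text{div}\,\sigma_0=0$ on the left and the constancy and symmetry of $\widehat A$ on the right --- to conclude that $\sigma_0=\widehat A\nabla u_0$ a.e.\ in $\Omega$. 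Hence $u_0$ is a weak solution of the homogenized Dirichlet problem~\eqref{seven}.

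\emph{Conclusion and main obstacle.} Since $\widehat A$ is elliptic and symmetric, the homogenized problem~\eqref{seven} has a unique weak solution by Lax--Milgram, so $u_0$ does not depend on the chosen subsequence and the whole family $u_\e$ converges weakly to $u_0$ in $H^1(\Omega;\R^d)$. As $\Omega$ is a bounded Lipschitz domain, the embedding $H^1(\Omega;\R^d)\hookrightarrow L^2(\Omega;\R^d)$ is compact, whence $u_\e\to u_0$ strongly in $L^2(\Omega;\R^d)$. The delicate step is the identification of $\sigma_0$: one must check that $\varphi w_\e^{j\b}$ and $\varphi u_\e$ are admissible in the respective weak formulations, exploit~\eqref{two} to obtain the div--curl-type cancellation of the volume terms without having to introduce adjoint correctors, and pass to the limit carefully in each weak--strong product; the remaining steps are routine applications of energy estimates, weak compactness, and Rellich's theorem.
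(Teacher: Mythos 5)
The paper does not prove Theorem~\ref{seventyone} at all---it simply cites \cite{book1}, Section 10.3, for this classical qualitative homogenization result. Your proposal is a correct sketch of exactly that standard argument (uniform energy estimate via Korn plus ellipticity, weak compactness, Tartar's oscillating test functions $x_je_\b+\e\chi_j^\b(x/\e)$ with the symmetry \eqref{two} replacing adjoint correctors, identification $\sigma_0=\widehat A\nabla u_0$, uniqueness of the homogenized problem, and Rellich for the strong $L^2$ convergence), so it matches the approach the paper implicitly relies on.
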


For a proof of the previous theorem, see~\cite{book1}, Section 10.3.  The function $u_0$ is called the homogenized solution and the boundary value problem it solves is the homogenized system corresponding to~\eqref{five}.  

If $\omega\subsetneq\R^d$, then it is difficult to qualitatively discuss the convergence of $u_\e$, as $H^1(\Omega_\e;\R^d)$ and $L^2(\Omega_\e;\R^d)$ depend explicitly on $\e$.  Qualitative convergence in this case is discussed in~\cite{siam},~\cite{cioranescu}, and others.  The homogenized system of elasticity corresponding to~\eqref{five} and of which $u_0$ is a solution is given by
\begin{equation}\label{seven}
\begin{cases}
\mathcal{L}_0\left({u_0}\right)={0}\,\,\,\text{ in }\Omega \\
u_0=f\,\,\,\text{ on }\dd\Omega,
\end{cases}
\end{equation}
where $\mathcal{L}_0=-\text{div}(\widehat{A}\nabla)$, $\widehat{A}=\{\widehat{a}_{ij}^{\a\b}\}_{1\leq i,j,\a,\b\leq d}$ denotes a constant matrix given by
\begin{equation}\label{eight}
\widehat{a}_{ij}^{\a\b}=\dashint_{Q\cap\omega}a_{ik}^{\a\g}\dfrac{\dd\X_{j}^{\g\b}}{\dd y_k},
\end{equation}
and $\X_j^\b=\{\X_{j}^{\g\b}\}_{1\leq\g\leq d}$ denotes the weak solution to the boundary value problem
\begin{equation}\label{nine}
\begin{cases}
\mathcal{L}_{1}(\X_j^\b)=0\,\,\,\text{ in }Q\cap\omega \\
\sigma_{1}(\X_j^\b)=0\,\,\,\text{ on }\dd\omega\cap Q \\
\chi_j^\b:=\X_j^\b-y_je^\b\text{ is 1-periodic},\,\,\,\dint_{Q\cap\omega}\chi_j^\b=0,
\end{cases}
\end{equation}
where $e^\b\in\R^d$ has a 1 in the $\b$th position and 0 in the remaining positions.  For details on the existence of solutions to~\eqref{nine}, see~\cite{book2}.  The functions $\chi^{\b}_j$ are referred to as the first-order correctors for the system~\eqref{five}.

It is assumed that any two connected components of $\R^d\backslash\omega$ are separated by some positive distance.  Specifically, if $\R^d\backslash\omega=\cup_{k=1}^\infty H_k$ where $H_k$ is connected and bounded for each $k$, then there exists a constant $\mathfrak{g}^\omega$ so that
\begin{equation}\label{sixtysix}
0< \mathfrak{g}^\omega\leq \underset{i\neq j}{\inf}\left\{\underset{\substack{x_i\in H_i \\ \,x_j\in H_j}}{\inf}|x_i-x_j|\right\}.
\end{equation}


%
%
%
%
%
\section{Convergence Rates in $H^1(\Omega_\e)$}\label{section3}
%
%
%
%
%

In this section, we establish $H^1(\Omega_\e)$-convergence rates for solutions to~\eqref{five} by proving Theorem~\ref{ten}.  It should be noted that if $A$ satisfies~\eqref{two} and~\eqref{three}, then $\widehat{A}$ defined by~\eqref{eight} satisfies conditions~\eqref{two} and~\eqref{three} but with possibly different constants $\widehat{\kappa}_1$ and $\widehat{\kappa}_2$ depending on $\kappa_1$ and $\kappa_2$.  In particular, we have the following lemma.  For a proof of Lemma~\ref{sixtyeight}, see either~\cite{book1},~\cite{yellowbook}, or~\cite{book2}.

\begin{lemm}\label{sixtyeight}
Suppose $A$ satisfies~\eqref{two},~\eqref{three}, and~\eqref{fiftyseven}.  If $\X_j^\b=\{\X_{j}^{\g\b}\}_\g$ denote the weak solutions to~\eqref{nine}, then $\widehat{A}=\{\widehat{a}_{ij}^{\a\b}\}$ defined by
\[
\widehat{a}_{ij}^{\a\b}=\dint_{Q\cap\omega}a_{ik}^{\a\g}\dfrac{\dd\X_{j}^{\g\b}}{\dd y_k}
\]
satisfies $\widehat{a}_{ij}^{\a\b}=\widehat{a}_{ji}^{\b\a}=\widehat{a}_{\a j}^{i\b}$ and
\begin{align*}
\widehat{\kappa}_1|\x|^2\leq \widehat{a}_{ij}^{\a\b}\x_i^\a\x_j^\b\leq \widehat{\kappa}_2|\x|^2
\end{align*}
for some $\widehat{\kappa}_1,\widehat{\kappa}_2>0$ depending $\kappa_1$ and $\kappa_2$ and any symmetric matrix $\x=\{\x_i^\a\}_{i,\a}$.
\end{lemm}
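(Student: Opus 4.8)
\emph{Proof plan.} The whole statement unwinds from the ``energy'' form of the homogenized coefficients, which I would establish first. Writing $\X_i^{\g'\a}=y_i\delta^{\g'\a}+\chi_i^{\g'\a}$, multiplying $\partial\X_i^{\g'\a}/\partial y_l$ by $a_{lk}^{\g'\g}\,\partial\X_j^{\g\b}/\partial y_k$ and integrating over $Q\cap\omega$, the correction term involving $\partial\chi_i^{\g'\a}/\partial y_l$ is killed by using the $1$-periodic function $\chi_i^\a$ as a test function in the weak formulation of~\eqref{nine} for $\X_j^\b$ (recall the Neumann condition on $\dd\omega\cap Q$ is built into that weak formulation, so periodic test functions need not vanish there), leaving
\[
\widehat{a}_{ij}^{\a\b}=\int_{Q\cap\omega}a_{lk}^{\g'\g}\frac{\dd\X_i^{\g'\a}}{\dd y_l}\frac{\dd\X_j^{\g\b}}{\dd y_k}.
\]
The symmetry $\widehat{a}_{ij}^{\a\b}=\widehat{a}_{ji}^{\b\a}$ is then read off by relabeling $l\leftrightarrow k$, $\g'\leftrightarrow\g$ and using $a_{lk}^{\g'\g}=a_{kl}^{\g\g'}$ from~\eqref{two}. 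For $\widehat{a}_{ij}^{\a\b}=\widehat{a}_{\a j}^{i\b}$ I would subtract the energy expressions for the two sides: the ``linear'' part of the difference is $\int_{Q\cap\omega}(a_{ik}^{\a\g}-a_{\a k}^{i\g})\,\partial\X_j^{\g\b}/\partial y_k$, which is zero because $a_{ik}^{\a\g}=a_{\a k}^{i\g}$ by~\eqref{two}, and the ``corrector'' part is zero because $\chi_i^\a-\chi_\a^i$ is again an admissible ($1$-periodic) test function for the equation satisfied by $\X_j^\b$.

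For the bounds, fix a symmetric $\x=\{\x_i^\a\}$ and set $w^{\g'}=\X_i^{\g'\a}\x_i^\a$ and $\psi^{\g'}=\chi_i^{\g'\a}\x_i^\a\in H^1_{\text{per}}(\omega;\R^d)$, so $\nabla w=\x+\nabla\psi$. Contracting the energy identity with $\x_i^\a\x_j^\b$ gives $\widehat{a}_{ij}^{\a\b}\x_i^\a\x_j^\b=\int_{Q\cap\omega}a_{lk}^{\g'\g}(\x_l^{\g'}+\partial_l\psi^{\g'})(\x_k^\g+\partial_k\psi^\g)$; since~\eqref{two} makes $a_{lk}^{\g'\g}\eta_l^{\g'}\zeta_k^\g$ depend only on the symmetric parts $e(\eta),e(\zeta)$ and $\x$ is already symmetric, this equals $\int_{Q\cap\omega}a_{lk}^{\g'\g}(\x+e(\psi))_l^{\g'}(\x+e(\psi))_k^\g\geq\kappa_1\int_{Q\cap\omega}|\x+e(\psi)|^2$ by~\eqref{three}. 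For the upper bound, testing the equation for $w$ against the periodic $\psi$ yields $\int_{Q\cap\omega}a_{lk}^{\g'\g}(\x_k^\g+\partial_k\psi^\g)\partial_l\psi^{\g'}=0$; substituting this into the expanded form of the identity (and using the symmetry of $A$ once more) collapses it to $\widehat{a}_{ij}^{\a\b}\x_i^\a\x_j^\b=\int_{Q\cap\omega}a_{lk}^{\g'\g}\x_l^{\g'}\x_k^\g-\int_{Q\cap\omega}a_{lk}^{\g'\g}e_l^{\g'}(\psi)e_k^\g(\psi)\leq\kappa_2|\x|^2$, so one may take $\widehat{\kappa}_2=\kappa_2$.

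The only genuine difficulty is the matching lower bound, which I would reduce to the uniform inequality $\inf\{\int_{Q\cap\omega}|\x+e(\phi)|^2:\phi\in H^1_{\text{per}}(\omega;\R^d)\}\geq c(d,\omega)|\x|^2$ over symmetric $\x$. By $2$-homogeneity it is enough to show the infimum is positive on $|\x|=1$; as a function of $\x$ it is continuous (it is the square of an $L^2(Q\cap\omega)$-distance) and the unit symmetric matrices form a compact set, so I only need to exclude the value $0$. Were it $0$ at some unit $\x_0$, there would be $\phi_n\in H^1_{\text{per}}(\omega;\R^d)$ --- which I may normalize to have zero mean on $Q\cap\omega$ --- with $e(\phi_n)\to-\x_0$ in $L^2(Q\cap\omega)$; a Korn inequality on the perforated periodic cell --- the periodic analogue of Lemma~\ref{fortysix}, proved from the extension operator of Theorem~\ref{thirteen} in the same way --- controls $\|\phi_n\|_{H^1(Q\cap\omega)}$ by $\|e(\phi_n)\|_{L^2(Q\cap\omega)}$, so a weak $H^1$ limit $\phi\in H^1_{\text{per}}(\omega;\R^d)$ satisfies $e(\phi)=-\x_0$. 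Extending $\phi$ periodically and using that $\omega$ is connected, $\phi+\x_0 y$ must be a rigid motion $a+By$ with $B$ antisymmetric, forcing $\x_0=B$ to be both symmetric and antisymmetric, i.e.\ $\x_0=0$, a contradiction. I expect this last step to be the main obstacle: it is the only place where the geometry of $\omega$ (connectedness, and the Lipschitz periodic structure underlying the Korn inequality) genuinely intervenes, whereas for $\omega=\R^d$ it collapses, since then $\int_Q\x:e(\phi)=0$ for all periodic $\phi$ and one obtains $\widehat{\kappa}_1=\kappa_1$ outright. Of course, one may also simply quote the statement from~\cite{book1},~\cite{yellowbook}, or~\cite{book2}.
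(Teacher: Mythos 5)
Your argument is correct, but note that the paper does not actually prove Lemma~\ref{sixtyeight}: it simply refers the reader to~\cite{book1},~\cite{yellowbook}, and~\cite{book2}, and what you have written is essentially the standard proof reproduced in those references --- the energy representation $\widehat{a}_{ij}^{\a\b}=\int_{Q\cap\omega}a_{lk}^{\g'\g}\,\dd_l\X_i^{\g'\a}\,\dd_k\X_j^{\g\b}$ obtained by testing the cell problem~\eqref{nine} with the periodic corrector, the symmetries read off from~\eqref{two}, the upper bound $\widehat{\kappa}_2=\kappa_2$ from testing with $\psi$, and the lower bound reduced to coercivity of $\x\mapsto\inf_\phi\|\x+e(\phi)\|_{L^2(Q\cap\omega)}^2$ via a Korn inequality on the perforated cell and the rigidity of $e(v)=0$ on the connected set $\omega$. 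The self-contained route has the merit of making explicit where the geometry enters, and in particular it shows that $\widehat{\kappa}_1$ necessarily depends on $\omega$ and $d$ as well as $\kappa_1$ (for a perforated structure no bound in terms of $\kappa_1,\kappa_2$ alone can hold), so the lemma's phrasing ``depending on $\kappa_1$ and $\kappa_2$'' is loose and your bookkeeping is the accurate one. Two minor points. First, the symmetry $\widehat{a}_{ij}^{\a\b}=\widehat{a}_{\a j}^{i\b}$ follows pointwise from $a_{ik}^{\a\g}=a_{\a k}^{i\g}$ in the defining formula~\eqref{eight}, so the corrector cancellation you invoke there is unnecessary (though harmless). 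Second, the Korn inequality you need, $\|\phi\|_{H^1(Q\cap\omega)}\leq C\|e(\phi)\|_{L^2(Q\cap\omega)}$ for zero-mean $\phi\in H^1_{\text{per}}(\omega;\R^d)$, is not quite ``Lemma~\ref{fortysix} proved in the same way'': Theorem~\ref{thirteen} and Lemma~\ref{fortysix} concern functions vanishing on $\Gamma_\e$, where Korn's first inequality applies directly, whereas in the periodic setting constants lie in the kernel of $e$ and one needs Korn's second inequality on the cell plus a compactness/rigidity step (or the periodic Korn inequality for perforated domains proved in~\cite{book2}). Since you normalize to zero mean and the needed inequality is exactly what~\cite{book2} provides, this is an imprecision of attribution rather than a gap in the argument.
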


We assume $A$ satisfies~\eqref{two},~\eqref{three} and~\eqref{fiftyseven}.  We assume $\Omega\subset\R^d$ is a bounded Lipschitz domain and $\omega\subseteq\R^d$ is an unbounded Lipschitz domain with 1-periodic structure such that $\R^d\backslash\omega$ is not connected but each connected component is separated by a positive distance $\mathfrak{g}^\omega$.  We also assume that each connected component of $\R^d\backslash\omega$ is bounded.

Let $K_\e$ be defined as in Section~\ref{section2}.  Let $\eta_\e\in C_0^\infty(\Omega)$ satisfy
\begin{equation}\label{eleven}
\begin{cases}
0\leq \eta_\e(x)\leq 1\,\,\,\text{ for }x\in\Omega, \\
\text{supp}(\eta_\e)\subset \{x\in\Omega\,:\,\text{dist}(x,\dd\Omega)\geq 3\e\}, \\
\eta_\e=1\,\,\,\text{ on }\{x\in\Omega\,:\,\text{dist}(x,\dd\Omega)\geq 4\e\}, \\
|\nabla\eta_\e|\leq C\e^{-1}.
\end{cases}
\end{equation}
If $P_\e$ is the linear extension operator provided by Theorem~\ref{thirteen}, then we write $\widetilde{w}=P_\e w$ for $w\in H^1(\Omega_\e,\Gamma_\e;\R^d)$.  Throughout, $C$ denotes a harmless constant that may change from line to line.

\begin{lemm}\label{twenty}
Let
\[
r_{\e}=u_\e-u_0-\e\chi^\e\smoothtwo{(\nabla u_0)\eta_\e}{\e}.
\]
Then
\begin{align*}
&\dint_{\Omega_\e} A^\e\nabla r_{\e}\cdot\nabla w \\
&\hspace{10mm}= |Q\cap\omega|\dint_{\Omega}\widehat{A}\nabla u_0\cdot\nabla\eta_\e\widetilde{w}-|Q\cap\omega|\dint_{\Omega}(1-\eta_\e)\widehat{A}\nabla u_0\cdot\nabla\widetilde{w} \\
&\hspace{20mm}+\dint_\Omega\left[|Q\cap\omega|\widehat{A}-\one_+^\e A^\e\right]\left[\nabla u_0-\smoothtwo{(\nabla u_0)\eta_\e}{\e}\right]\cdot\nabla \widetilde{w} \\
&\hspace{20mm}+\dint_\Omega\left[|Q\cap\omega|\widehat{A}-\one_+^\e A^\e\nabla\X^\e\right]\smoothtwo{(\nabla u_0)\eta_\e}{\e}\cdot\nabla \widetilde{w} \\
&\hspace{20mm}-\e\dint_{\Omega_\e}A^\e\chi^\e\nabla\smoothtwo{(\nabla u_0)\eta_\e}{\e}\cdot\nabla w
\end{align*}
for any $w\in H^1(\Omega_\e,\Gamma_\e;\R^d)$.
\end{lemm}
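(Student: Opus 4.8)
The plan is to establish the identity by a direct computation, starting from the weak formulation satisfied by $u_\e$ and the homogenized equation satisfied by $u_0$; no estimates are involved. Write $g_\e = \smoothtwo{(\nabla u_0)\eta_\e}{\e}$ for brevity, so $r_\e = u_\e - u_0 - \e\chi^\e g_\e$. Note $g_\e \in C^\infty(\R^d)$ with bounded derivatives (it is a convolution of an $L^2$ function against the fixed mollifier $\zeta_\e$, applied twice), while $\chi^\e \in H^1(\Omega_\e;\R^d)$, so $r_\e \in H^1(\Omega_\e;\R^d)$ and the left-hand side is meaningful. Since $u_\e$ solves~\eqref{five}, equation~\eqref{six} gives $\int_{\Omega_\e} A^\e\nabla u_\e\cdot\nabla w = 0$, so it remains to expand $-\int_{\Omega_\e}A^\e\nabla(u_0 + \e\chi^\e g_\e)\cdot\nabla w$. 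By the product rule $\nabla(\e\chi^\e g_\e) = (\nabla\chi)^\e g_\e + \e\chi^\e\nabla g_\e$, and the second summand produces exactly the last term of the claimed identity.

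Next I replace $\chi$ by $\X$. Since $\chi_j^\b = \X_j^\b - y_je^\b$ (see~\eqref{nine}), at the level of the elasticity tensor $a_{ik}^{\a\g}\dd_k\chi_j^{\g\b} = a_{ik}^{\a\g}\dd_k\X_j^{\g\b} - a_{ij}^{\a\b}$, i.e. $A\nabla\chi = A\nabla\X - A$. Substituting, the $u_0$ contribution and the $-A$ contribution merge into $\int_{\Omega_\e}A^\e(g_\e - \nabla u_0)\cdot\nabla w$, while the $A\nabla\X$ contribution becomes $-\int_{\Omega_\e}(A\nabla\X)^\e g_\e\cdot\nabla w$. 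Now extend $w$ to $\widetilde{w} = P_\e w \in H_0^1(\Omega_0;\R^d)$ via Theorem~\ref{thirteen}; since $\widetilde{w} = w$ as elements of $H^1(\Omega_\e;\R^d)$ and $\Omega_\e = \Omega\cap\e\omega$, each of these two integrals over $\Omega_\e$ equals the integral over $\Omega$ of $\one_+^\e$ times the same integrand paired with $\nabla\widetilde{w}$.

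Finally I insert $|Q\cap\omega|\widehat{A}$ by adding and subtracting. The term $\int_\Omega\one_+^\e A^\e(g_\e - \nabla u_0)\cdot\nabla\widetilde{w}$ splits as $\int_\Omega(|Q\cap\omega|\widehat{A} - \one_+^\e A^\e)(\nabla u_0 - g_\e)\cdot\nabla\widetilde{w} + |Q\cap\omega|\int_\Omega\widehat{A}(g_\e - \nabla u_0)\cdot\nabla\widetilde{w}$, and $-\int_\Omega\one_+^\e(A\nabla\X)^\e g_\e\cdot\nabla\widetilde{w}$ splits as $\int_\Omega(|Q\cap\omega|\widehat{A} - \one_+^\e A^\e\nabla\X^\e)g_\e\cdot\nabla\widetilde{w} - |Q\cap\omega|\int_\Omega\widehat{A} g_\e\cdot\nabla\widetilde{w}$; the first pieces are precisely the third and fourth terms in the statement, and the two remainders add to $-|Q\cap\omega|\int_\Omega\widehat{A}\nabla u_0\cdot\nabla\widetilde{w}$. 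Because $\eta_\e\in C_0^\infty(\Omega)$ has support compactly contained in $\Omega$, the function $\eta_\e\widetilde{w}$ belongs to $H_0^1(\Omega;\R^d)$ and is admissible in the weak form of~\eqref{seven}, so $\int_\Omega\widehat{A}\nabla u_0\cdot\nabla(\eta_\e\widetilde{w}) = 0$; expanding $\nabla(\eta_\e\widetilde{w})$ and then splitting $\nabla\widetilde{w} = \eta_\e\nabla\widetilde{w} + (1-\eta_\e)\nabla\widetilde{w}$ rewrites $-|Q\cap\omega|\int_\Omega\widehat{A}\nabla u_0\cdot\nabla\widetilde{w}$ as the first two terms of the identity. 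Collecting all contributions yields the claim.

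The computation is essentially bookkeeping, so the delicate points are functional-analytic: that $\nabla w = \nabla\widetilde{w}$ a.e. on $\Omega_\e$ and that $\one_+^\e$ correctly restricts $\Omega$ to $\Omega_\e$, so that rewriting $\int_{\Omega_\e}$ as $\int_\Omega$ is legitimate; that $\chi^\e g_\e$, and hence $r_\e$, genuinely lie in $H^1(\Omega_\e;\R^d)$, so the left-hand side and the product rule are meaningful; and---the one place the specific structure of the cut-off is used---that $\text{supp}(\eta_\e) \subset\subset \Omega$ guarantees $\eta_\e\widetilde{w} \in H_0^1(\Omega;\R^d)$, which is what licenses testing the homogenized system against it. The index conventions for the fourth-order elasticity tensor, especially in the identity $A\nabla\chi = A\nabla\X - A$, also require care.
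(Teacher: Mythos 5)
Your proposal is correct and follows essentially the same route as the paper: both proofs combine the weak form of~\eqref{five}, the homogenized equation~\eqref{seven} tested against $\eta_\e\widetilde{w}$ (admissible since $\eta_\e\widetilde{w}\in H_0^1(\Omega;\R^d)$), the product rule on $\e\chi^\e\smoothtwo{(\nabla u_0)\eta_\e}{\e}$, the extension $\widetilde{w}=P_\e w$ with $\one_+^\e$ to pass from $\Omega_\e$ to $\Omega$, and adding and subtracting $|Q\cap\omega|\widehat{A}$. The only difference is bookkeeping order (you substitute $A\nabla\chi=A\nabla\X-A$ and invoke the homogenized equation at the end, the paper inserts it at the start), which does not change the argument.
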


\begin{proof}
Since $u_\e$ and $u_0$ solve~\eqref{five} and~\eqref{seven}, respectively,
\[
\dint_{\Omega_\e}A^\e\nabla u_\e\cdot\nabla w=0
\]
and
\[
|Q\cap\omega|\dint_{\Omega}\widehat{A}\nabla u_0\cdot\nabla (\widetilde{w}\eta_\e)=0
\]
for any $w\in H^1(\Omega_\e,\Gamma_\e;\R^d)$.  Hence,
\begin{align*}
&\dint_{\Omega_\e}A^\e\nabla r_\e\cdot\nabla w \\
&\hspace{10mm}= \dint_{\Omega_\e}A^\e\nabla u_\e\cdot\nabla w-\dint_{\Omega_\e}A^\e\nabla u_0\cdot\nabla w \\
&\hspace{20mm}-\dint_{\Omega_\e}A^\e\nabla\left[\e\chi^\e\smoothtwo{(\nabla u_0)\eta_\e}{\e}\right]\cdot\nabla w \\
&\hspace{10mm}= |Q\cap\omega|\dint_{\Omega}\widehat{A}\nabla u_0\cdot\nabla (\widetilde{w}\eta_\e)-\dint_{\Omega_\e}A^\e\nabla u_0\cdot\nabla w \\
&\hspace{20mm}-\dint_{\Omega_\e}A^\e\nabla \chi^\e\smoothtwo{(\nabla u_0)\eta_\e}{\e}\cdot\nabla w \\
&\hspace{20mm}-\e\dint_{\Omega_\e}A^\e\chi^\e\nabla\smoothtwo{(\nabla u_0)\eta_\e}{\e}\cdot\nabla w \\
&\hspace{10mm}= |Q\cap\omega|\dint_\Omega\widehat{A}\nabla u_0\cdot\nabla\eta_\e\widetilde{w}-|Q\cap\omega|\dint_\Omega (1-\eta_\e)\widehat{A}\nabla u_0\cdot\nabla\widetilde{w} \\
&\hspace{20mm}+\dint_{\Omega}\left[|Q\cap\omega|\widehat{A}-\one_+^\e A^\e\right]\left[\nabla u_0-\smoothtwo{(\nabla u_0)\eta_\e}{\e}\right]\cdot\nabla\widetilde{w} \\
&\hspace{20mm}+\dint_{\Omega}\left[|Q\cap\omega|\widehat{A}-\one_+^\e A^\e-\one_+^\e A^\e\nabla\chi^\e\right]\smoothtwo{(\nabla u_0)\eta_\e}{\e}\cdot\nabla\widetilde{w} \\
&\hspace{20mm}-\e\dint_{\Omega_\e}A^\e\chi^\e\nabla\smoothtwo{(\nabla u_0) \eta_\e}{\e}\cdot\nabla w,
\end{align*}
which is the desired equality.
\end{proof}

Lemmas~\ref{sixtyfour} presented below is used in the proof of Lemma~\ref{twentyone}, which establishes a Poincar\'{e} type inequality for the perforated domain.  We use the notation $\Delta(x,r)=B(x,r)\cap\dd \Omega$ to denote a surface ball of $\dd\Omega$.

\begin{lemm}\label{sixtyfour}
For sufficiently small $\e$, there exist $r_0,\rho_0>0$ depending only on $\omega$ such that for any $x\in\dd\Omega$,
\[
\Delta\left(y,\e\rho_0\right)\subset\Delta(x,\e r_0)\text{ and }\overline{\Delta\left(y,\e\rho_0\right)}\subset\Gamma_\e
\]
for some $y\in\Gamma_\e$.
\end{lemm}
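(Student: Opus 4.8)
The plan is to reduce the statement to a clean geometric fact about $\omega$ --- that $\omega$ contains, in every direction, a ``channel'' of uniform width --- and then combine it with the Lipschitz character of $\dd\Omega$ via an elementary intermediate-value (topological crossing) argument. \textbf{Reduction.} It suffices to produce $y\in\dd\Omega$ with $|y-x|\le\e(r_0-\rho_0)$ such that the closed ball $\overline{B(y,\e\rho_0)}$ lies in the open set $\e\omega$: then $\Delta(y,\e\rho_0)\subset\Delta(x,\e r_0)$ is immediate, and $\overline{\Delta(y,\e\rho_0)}\subset\Gamma_\e$ follows once one checks that every $p\in\dd\Omega$ lying in the \emph{open} set $\e\omega$ already lies in $\Gamma_\e=\dd\Omega_\e\cap\dd\Omega$ --- indeed a neighborhood of $p$ lies in $\e\omega$, so $\Omega_\e=\Omega\cap\e\omega$ coincides with $\Omega$ near $p$, whence $p\in\overline{\Omega_\e}\setminus\Omega_\e=\dd\Omega_\e$. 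Dividing by $\e$, one must find a point of $\e^{-1}\dd\Omega$ within bounded distance of $\e^{-1}x$ about which a fixed-radius Euclidean ball fits inside $\omega$.

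\textbf{Channels in $\omega$.} Since $\omega$ is open and nonempty it contains a ball $B(z_0,\rho_1)$, and since it is open and connected it contains, for each $j\in\{1,\dots,d\}$, an open tube of some radius $\rho_2>0$ about a polygonal arc joining $z_0$ to $z_0+e_j$. By $1$-periodicity, $\omega$ then contains a connected, $1$-periodic open set $\mathcal{S}$: an open tube of radius $\rho:=\min(\rho_1,\rho_2)$ about a connected periodic network $\Theta$ of arcs passing through the lattice $z_0+\mathbb{Z}^d$. For a unit vector $\nu$, a Bresenham-type digital line in $\Theta$ (unit lattice steps tracking $\R\nu$ and staying within $\tfrac{\sqrt d}{2}$ of it) produces a path $\gamma\subset\Theta$ whose component along $\nu$ runs to $+\infty$ and to $-\infty$ while $\gamma$ stays within orthogonal distance $C_0=C_0(\omega,d)$ of a line parallel to $\nu$; moreover, since the orthogonal projection of $\mathbb{Z}^d$ onto any hyperplane is $\tfrac{\sqrt d}{2}$-dense, this line can be arranged to pass within $\tfrac{\sqrt d}{2}$ of any prescribed point. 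Crucially, $\rho$, $C_0$, and this last bound are independent of $\nu$.

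\textbf{Crossing argument and conclusion.} Fix $x\in\dd\Omega$ and use a Lipschitz chart at $x$ without rotating coordinates: for some unit vector $\nu$ (from a finite list determined by $\Omega$) and constants $R_*>0$, $M\ge 1$ depending only on $\Omega$, $\dd\Omega$ near $x$ is the graph, in the direction $\nu$, of an $M$-Lipschitz function over $\{p\in x+\nu^\perp:|p-x|<R_*\}$ whose graph point over $x$ is $x$. Pick the column $\gamma\subset\Theta\subset\omega$ of the previous step for this $\nu$, with axis passing within $\tfrac{\sqrt d}{2}$ of $x$. For $\e$ small enough that the rescaled chart contains the ball $B(\e^{-1}x,\,C_0+\sqrt d)$, the rescaled boundary there is the graph in direction $\nu$ of an $M$-Lipschitz function $\Psi$ with $\Psi(\e^{-1}x)=0$, so $|\Psi|\le M(C_0+\sqrt d)$ on that ball. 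Since the $\nu$-component of $\gamma$ sweeps over all of $\R$ while $\gamma$ stays within orthogonal distance $C_0+\tfrac{\sqrt d}{2}$ of $\e^{-1}x$, the continuous scalar function $t\mapsto$ ($\nu$-height of $\gamma(t)$) $-$ ($\Psi$ at the foot of $\gamma(t)$) changes sign, and at a zero of it the point $q^*\in\gamma$ lies both on the rescaled boundary and in $\Theta$. Hence $\overline{B(q^*,\rho/2)}\subset\mathcal{S}\subset\omega$ and $|q^*-\e^{-1}x|\le(C_0+\sqrt d)\sqrt{1+M^2}$. Rescaling, the point $y\in\dd\Omega$ corresponding to $\e q^*$ does the job with $\rho_0:=\rho/2$ and $r_0:=(C_0+\sqrt d)\sqrt{1+M^2}+\rho_0$, where ``sufficiently small $\e$'' means $\e\lesssim R_*/(C_0+\sqrt d)$; thus $\rho_0$ depends only on $\omega$ and $d$, while $r_0$ depends in addition on the Lipschitz character of $\Omega$ through $M$.

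\textbf{Main obstacle.} The step requiring the most care is ``Channels in $\omega$'': extracting from the hypotheses on $\omega$ a channel in \emph{every} direction with the width $\rho$, the cylinder radius $C_0$, and the near-miss bound all uniform in the direction $\nu$ --- it is the digital-line routing together with the projection-density remark that secures this uniformity. Granting that, the crossing argument is a one-line sign change, and the set-theoretic bookkeeping in the Reduction step (relating $\overline{B(y,\e\rho_0)}$, the surface balls $\Delta$, and membership in $\Gamma_\e$) is routine.
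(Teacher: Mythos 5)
Your argument is correct, but it takes a genuinely different route from the paper's. The paper's proof leans entirely on the two structural hypotheses on the perforations: each component $H_j$ of $\R^d\setminus\omega$ is bounded, with $\sup_j\mathrm{diam}\,H_j\leq M$ by periodicity, and distinct components are separated by $\mathfrak{g}^\omega>0$ as in \eqref{sixtysix}. It then argues in two cases: if $\mathrm{dist}(x,\e H_j)\geq\frac{1}{4}\e\mathfrak{g}^\omega$ for every $j$, take $y=x$; otherwise $x$ lies near a single hole $\e H_k$, and one moves along $\dd\Omega$ to a point $y$ with $\mathrm{dist}(y,\e H_k)=\frac{1}{8}\e\mathfrak{g}^\omega$, the separation guaranteeing no other hole interferes. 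This yields $r_0=2\max\{\mathfrak{g}^\omega,M\}$ and $\rho_0=\frac{1}{16}\mathfrak{g}^\omega$ in a few lines, with constants depending only on $\omega$, as the statement requires. Your proof never uses boundedness or separation of the holes: you build a $1$-periodic connected network of uniform-width channels inside $\omega$, route a digital line in the graph direction of a Lipschitz chart at $x$, and locate the required boundary point by an intermediate-value crossing; the key uniformity claims (tube width and the cylinder radius $C_0$ independent of the direction $\nu$, $\tfrac{\sqrt d}{2}$-density of the projected lattice, the sign change of height minus graph) all hold, and your reduction identifying points of $\dd\Omega$ interior to $\e\omega$ with points of $\Gamma_\e$ is sound. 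What your route buys is robustness—it would survive without \eqref{sixtysix} and without bounded components—at the cost of heavier machinery and of slightly weaker constants: your $r_0$ and the smallness threshold for $\e$ depend on the Lipschitz character of $\Omega$ (through the chart constants), not only on $\omega$, a mismatch with the lemma as stated, though harmless where the lemma is used (Lemma~\ref{twentyone}, whose constant depends on $\Omega$ anyway); note the paper's second case also tacitly needs $\e$ small relative to $\Omega$ so that $\dd\Omega$ actually reaches distance $\frac{1}{8}\e\mathfrak{g}^\omega$ from $\e H_k$.
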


\begin{proof}
Write $\R^d\backslash\omega=\cup_{j=1}^\infty H_j$, where each $H_j$ is connected and bounded by assumption (see Section~\ref{section2}).  Since $\omega$ is 1-periodic, there exists a constant $M<\infty$ such that
\[
\underset{j\geq 1}{\sup}\,\{\text{diam}\,H_j\}\leq M.
\]
Take
\begin{equation}\label{seventytwo}
r_0=2\max\left\{\mathfrak{g}^\omega,M\right\},
\end{equation}
where $\mathfrak{g}^\omega$ is defined in Section~\ref{section2}.  Set $\rho_0=\frac{1}{16}\mathfrak{g}^\omega$.  Let
\[
\widetilde{H}_j=\left\{z\in\R^d\,:\,\text{dist}(z,H_j)<\frac{1}{4}\mathfrak{g}^\omega\right\}\text{ for each }j,
\]
and fix $x\in\dd\Omega$.  If $x\in\dd\Omega\backslash(\cup_{j=1}^\infty \e \widetilde{H}_j)$, then take $y=x$.  Indeed, for any $z\in\Delta(y,\e \rho_0)\subset\Delta(x,\e r_0)$ and any positive integer $k$,
\begin{align*}
\text{dist}(z,\e H_k) &\geq \text{dist}(y,\e H_k)-|y-z| \\
&\geq \e\frac{1}{4}\mathfrak{g}^\omega-\e\rho_0 \\
&\geq \e\left\{\frac{1}{4}\mathfrak{g}^\omega-\frac{1}{16}\mathfrak{g}^\omega\right\} \\
&\geq \e\frac{3}{16}\mathfrak{g}^\omega,
\end{align*}
and so $\overline{\Delta(y,\e\rho_0)}\subset\Gamma_\e$.

Suppose $x\in\dd\Omega\cap(\cup_{j=1}^\infty \e\widetilde{H}_j)$.  There exists a positive integer $k$ such that $x\in\e \widetilde{H}_k$.  Moreover, $\e\widetilde{H}_k\subset B(x,\e r_0)$ since for any $z\in\e\widetilde{H}_k$ we have
\begin{align*}
|x-z| &\leq \text{dist}(x,\e H_k)+\text{diam}\,(\e H_k)+\text{dist}(z,\e H_k) \\
&\leq \e\frac{1}{4}\mathfrak{g}^\omega+\e M+\e\frac{1}{4}\mathfrak{g}^\omega \\
&< \e\mathfrak{g}^\omega+\e M \\
&< \e r_0.
\end{align*}
In this case, choose $y\in \e(\widetilde{H}_k\backslash H_k)$ so that $\text{dist}(y,\e H_k)= \e(1/8)\mathfrak{g}^\omega$ and $y\in\dd\Omega$.  Then for any $z\in\Delta(y,\e\rho_0)\subset[\dd\Omega\cap\e(\widetilde{H}_k\backslash H_k)]\subset\Delta(x,\e r_0)$,
\begin{align*}
\text{dist}(z,\e H_k) &\geq \text{dist}(y,\e H_k)-|y-z| \\
&\geq \e\frac{1}{8}\mathfrak{g}^\omega-\e\dfrac{1}{16}\mathfrak{g}^\omega \\
&\geq \e\dfrac{1}{16}\mathfrak{g}^\omega,
\end{align*}
and so $\overline{\Delta(y,\e\rho_0)}\subset\Gamma_\e$.
\end{proof}


\begin{lemm}\label{twentyone}
For $w\in H^1(\Omega_\e,\Gamma_\e;\R^d)$,
\[
\|\widetilde{w}\|_{L^2(\mathcal{O}_{4\e})}\leq C\e\|\nabla \widetilde{w}\|_{L^2(\Omega)},
\]
where $\mathcal{O}_{4\e}=\{x\in\Omega\,:\,\text{dist}(x,\dd\Omega)<4\e\}$ and $C$ depends on $d$, $\Omega$, and $\omega$.
\end{lemm}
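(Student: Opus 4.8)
The plan is to localize the estimate to the microscopic scale. The layer $\mathcal O_{4\e}$ will be covered by $O(\e^{-(d-1)})$ balls of radius comparable to $\e$ centered on $\dd\Omega$, and on each of these a Poincar\'e inequality will apply because $\widetilde w=P_\e w$ inherits a vanishing trace on a boundary piece of comparable size. Concretely, the first point is to observe that $\widetilde w$, restricted to $\Omega$, lies in $H^1(\Omega;\R^d)$ and, for every surface ball furnished by Lemma~\ref{sixtyfour}, has vanishing trace on it. This requires unpacking that lemma slightly: the center $y$ there is selected so that $\mathrm{dist}(y,\e H_j)\geq\tfrac1{16}\e\mathfrak g^\omega$ for \emph{every} component $H_j$ of $\R^d\setminus\omega$ (for the components not nearest to $y$ one uses the separation~\eqref{sixtysix}), so that $B(y,\e\rho_0)\subset\e\omega$; hence $B(y,\e\rho_0)\cap\Omega\subset\Omega_\e$, on which $\widetilde w=w$, and since $w$ vanishes on $\Gamma_\e\supset\overline{\Delta(y,\e\rho_0)}$, the trace of $\widetilde w$ on $\Delta(y,\e\rho_0)$ vanishes.

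For the covering, I would fix a maximal $(\e/5)$-separated set $\{z_k\}_{k}\subset\dd\Omega$ (finite, since $\dd\Omega$ is compact). Then the balls $B(z_k,\tfrac{\e}{10})$ are pairwise disjoint, $\dd\Omega\subset\bigcup_k B(z_k,\tfrac\e5)$, and, since every point of $\mathcal O_{4\e}$ lies within $4\e$ of $\dd\Omega$, also $\mathcal O_{4\e}\subset\bigcup_k(B(z_k,5\e)\cap\Omega)$. Moreover, for any fixed $\Lambda\geq1$ the enlarged balls $\{B(z_k,\Lambda\e)\}_k$ have overlap bounded by a constant $N=N(d,\Lambda)$, being comparable in size to the disjoint balls $B(z_k,\tfrac\e{10})$.

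For each $k$, apply Lemma~\ref{sixtyfour} at $z_k$ to get $y_k\in\Gamma_\e$ with $\Delta(y_k,\e\rho_0)\subset\Delta(z_k,\e r_0)$ and $\overline{\Delta(y_k,\e\rho_0)}\subset\Gamma_\e$. Choosing $\Lambda=\Lambda(r_0,\Omega)$ large enough and setting $D_k:=B(z_k,\Lambda\e)\cap\Omega$, one has $B(z_k,5\e)\cap\Omega\subset D_k$ and $\Delta(y_k,\e\rho_0)\subset\dd D_k\cap\dd\Omega$; for small $\e$ one may, if convenient, replace $D_k$ by a bi-Lipschitz image of a cube of side $\sim\e$ contained in $\Omega$ and having both $B(z_k,5\e)\cap\Omega$ and $\Delta(y_k,\e\rho_0)$ in its closure, to make all shapes explicit. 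The key local input is the scaled Poincar\'e inequality
\[
\|\widetilde w\|_{L^2(D_k)}\leq C\e\,\|\nabla\widetilde w\|_{L^2(D_k)},
\]
with $C$ independent of $k$ and $\e$. This follows from the standard statement ``$\|v\|_{L^2(D)}\leq C(D,\Sigma)\|\nabla v\|_{L^2(D)}$ whenever $v\in H^1(D;\R^d)$ vanishes on $\Sigma\subset\dd D$ with $\mathcal H^{d-1}(\Sigma)>0$'', applied to $\widehat D_k:=\e^{-1}(D_k-z_k)$ and $\widehat\Sigma_k:=\e^{-1}(\Delta(y_k,\e\rho_0)-z_k)$ and then rescaled. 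The point is that, $\Omega$ being a fixed bounded Lipschitz domain, the domains $\widehat D_k$ (over all $k$ and all small $\e$) have a uniformly bounded Lipschitz character while $\mathcal H^{d-1}(\widehat\Sigma_k)\geq c(\rho_0,\Omega)>0$, so the Poincar\'e constant can be chosen uniform over this family.

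Finally, summing over $k$ and using the bounded overlap of $\{D_k\}\subset\{B(z_k,\Lambda\e)\}$ gives
\[
\|\widetilde w\|_{L^2(\mathcal O_{4\e})}^2\leq\sum_k\|\widetilde w\|_{L^2(D_k)}^2\leq C\e^2\sum_k\|\nabla\widetilde w\|_{L^2(D_k)}^2\leq C N\e^2\|\nabla\widetilde w\|_{L^2(\Omega)}^2,
\]
and taking square roots yields the lemma with $C=C(d,\Omega,\omega)$. Everything here is bookkeeping except the scale- and index-uniformity of the local Poincar\'e constant, which I expect to be the one point requiring care; it is exactly what the scale-invariant conclusion of Lemma~\ref{sixtyfour} (a surface ball of radius $\sim\e$ lying in $\Gamma_\e$) is designed to supply, and it can be verified either by a routine compactness argument at unit scale or, in graph coordinates, by writing $\widetilde w$ along vertical segments issuing from $\widehat\Sigma_k$ and averaging over the base point.
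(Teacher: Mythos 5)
Your proposal is correct and follows essentially the same route as the paper: cover the boundary layer by $\e$-scale pieces, use Lemma~\ref{sixtyfour} to produce in each piece a surface ball of radius $\sim\e$ lying in $\Gamma_\e$ on which $\widetilde{w}$ has zero trace, apply a rescaled Poincar\'e inequality with a constant uniform in $\e$ and in the piece, and sum using bounded overlap. The only differences are cosmetic (a maximal $\e$-net versus a compactness subcover, and your more explicit unpacking of why the trace vanishes), and your handling of the uniform Poincar\'e constant matches the paper's level of rigor.
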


\begin{proof}
We cover $\dd\Omega$ with the surface balls $\Delta(x,\e r_0)$ provided in Lemma~\ref{sixtyfour} and partition the region $\mathcal{O}_{4\e}$.  In particular, let $r_0$ denote the constant given by Lemma~\ref{sixtyfour}, and note $\cup_{x\in\dd\Omega}\Delta(x,\e r_0)$ covers $\dd\Omega$, which is compact.  Then there exists $\{x_i\}_{i=1}^{N}$ with $\dd\Omega\subset\cup_{i=1}^{N}\Delta(x_i,\e r_0)$, where $N=N(\e)$.  Write
\[
\mathcal{O}_{4\e}^{(i)}=\{x\in\Omega\,:\,\text{dist}(x,\Delta_i)<4\e\},\,\,\,\text{where }\Delta_i=\Delta(x_i,\e r_0).
\]
Given that $\Omega$ is a Lipschitz domain, there exists a positive integer $M<\infty$ independent of $\e$ such that $\mathcal{O}_{4\e}^{(i)}\cap\mathcal{O}_{4\e}^{(j)}\neq\emptyset$ for at most $M$ positive integers $j$ different from $i$.

Set $W(x)=\widetilde{w}(\e x)$.  Note for each $1\leq i\leq N$, by Lemma~\ref{sixtyfour} there exists a $y_i\in \mathcal{O}_{4\e}^{(i)}$ such that $\widetilde{w}\equiv 0$ on $\Delta(y_i,\e\rho_0)\subset\Delta_i$.  Hence, by Poincar\'{e}'s inequality (see Theorem 1 in~\cite{meyers}),
\begin{equation}\label{seventythree}
\left(\dint_{\mathcal{O}_{4\e}^{(i)}/\e}|W|^2\right)^{1/2}\leq C\left(\dint_{\mathcal{O}_{4\e}^{(i)}/\e}|\nabla W|^2\right)^{1/2},
\end{equation}
where $C$ depends on $\Omega$, $r_0$, and $\rho_0$ but is independent of $\e$ and $i$.  Specifically,
\[
\dint_{\mathcal{O}_{4\e}}|\widetilde{w}(x)|^2\,dx\leq C\e^2\sum_{i=1}^N\dint_{\mathcal{O}_{4\e}^{(i)}}|\nabla \widetilde{w}(x)|^2\,dx\leq C_1\e^2\dint_{\mathcal{O}_{4\e}}|\nabla \widetilde{w}(x)|^2\,dx
\]
where we've made the change of variables $\e x\mapsto x$ in~\eqref{seventythree} and $C_1$ is a constant depending on $\Omega$, $\omega$, and $M$ but independent of $\e$.

\end{proof}

\begin{lemm}\label{twentysix}
For $w\in H^1(\Omega_\e,\Gamma_\e;\R^d)$,
\begin{align*}
\left|\dint_{\Omega_\e}A^\e\nabla r_\e\cdot\nabla w\right| &\leq C\left\{\|u_0\|_{L^2(\mathcal{O}_{4\e})}+\|(\nabla u_0)\eta_\e-\smoothone{(\nabla u_0)\eta_\e}{\e}\|_{L^2(\Omega)}\right. \\
&\hspace{30mm}\left.+\e\|\smoothone{(\nabla^2 u_0)\eta_\e}{\e}\|_{L^2(\Omega)}\right\}\|w\|_{H^1(\Omega_\e)}
\end{align*}
\end{lemm}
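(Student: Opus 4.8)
The plan is to begin from the identity of Lemma~\ref{twenty}, which decomposes $\int_{\Omega_\e}A^\e\nabla r_\e\cdot\nabla w$ into a sum of five integrals; write them as $\mathcal I_1,\dots,\mathcal I_5$, in the order displayed there. The first two, carrying the factors $\nabla\eta_\e$ and $1-\eta_\e$, are boundary-layer terms supported in $\mathcal O_{4\e}$ by~\eqref{eleven}; $\mathcal I_3$ and $\mathcal I_5$ are ``easy'' oscillation terms with bounded coefficients; and $\mathcal I_4$, which carries the unbounded factor $\nabla\chi^\e$, is the genuine homogenization term, to be handled by a flux corrector. Throughout, $\widetilde w=P_\e w$ and, by Theorem~\ref{thirteen}, $\|\nabla\widetilde w\|_{L^2(\Omega)}\le\|\nabla\widetilde w\|_{L^2(\Omega_0)}\le C\|\nabla w\|_{L^2(\Omega_\e)}\le C\|w\|_{H^1(\Omega_\e)}$, so each term reduces to producing the advertised $L^2$ factor multiplied by $\|\nabla\widetilde w\|_{L^2(\Omega)}$.

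First I would dispose of $\mathcal I_1,\mathcal I_2,\mathcal I_3,\mathcal I_5$. For $\mathcal I_1$ the coefficient is bounded by~\eqref{three} and $|\nabla\eta_\e|\le C\e^{-1}$ with $\operatorname{supp}\nabla\eta_\e\subset\mathcal O_{4\e}$, so Cauchy--Schwarz and Lemma~\ref{twentyone} give $|\mathcal I_1|\le C\e^{-1}\|\nabla u_0\|_{L^2(\mathcal O_{4\e})}\|\widetilde w\|_{L^2(\mathcal O_{4\e})}\le C\|u_0\|_{L^2(\mathcal O_{4\e})}\|w\|_{H^1(\Omega_\e)}$, the first term on the right-hand side; $\mathcal I_2$ is the same with $|1-\eta_\e|\le1$ in place of the gradient bound. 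For $\mathcal I_3$ the coefficient $|Q\cap\omega|\widehat A-\one_+^\e A^\e$ is again bounded, and $\nabla u_0-\smoothtwo{(\nabla u_0)\eta_\e}{\e}=(1-\eta_\e)\nabla u_0+\bigl((\nabla u_0)\eta_\e-\smoothtwo{(\nabla u_0)\eta_\e}{\e}\bigr)$; since $\smoothtwo{g}{\e}=\smoothone{\smoothone{g}{\e}}{\e}$ and $K_\e$ is an $L^2$-contraction ($\zeta\ge0$, $\int\zeta=1$), the second piece has $L^2(\Omega)$-norm $\le 2\|(\nabla u_0)\eta_\e-\smoothone{(\nabla u_0)\eta_\e}{\e}\|_{L^2(\Omega)}$, while the first is supported in $\mathcal O_{4\e}$, so $\mathcal I_3$ is controlled by the first two right-hand side terms times $\|w\|_{H^1(\Omega_\e)}$. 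For $\mathcal I_5$ pull out the $\e$, write $A^\e\chi^\e=(A\chi)^\e$ with $A\chi$ $1$-periodic and $\|A\chi\|_{L^2(Q)}<\infty$ (by~\eqref{three} and $\chi\in H^1_{\text{per}}(\omega;\R^d)$), use $\nabla\smoothtwo{(\nabla u_0)\eta_\e}{\e}=\smoothone{\smoothone{\nabla[(\nabla u_0)\eta_\e]}{\e}}{\e}$, and apply Lemma~\ref{fifteen} (with the outer $K_\e$) to get $|\mathcal I_5|\le C\e\|\smoothone{\nabla[(\nabla u_0)\eta_\e]}{\e}\|_{L^2(\Omega)}\|w\|_{H^1(\Omega_\e)}$; since $\nabla[(\nabla u_0)\eta_\e]=(\nabla^2u_0)\eta_\e+(\nabla u_0)\nabla\eta_\e$ and $K_\e$ is linear, this produces exactly $\e\|\smoothone{(\nabla^2u_0)\eta_\e}{\e}\|_{L^2(\Omega)}$ plus a $\nabla\eta_\e$ boundary-layer piece handled as in $\mathcal I_1$.

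The substantive step is $\mathcal I_4=\int_\Omega\bigl[|Q\cap\omega|\widehat A-\one_+^\e A^\e\nabla\X^\e\bigr]\smoothtwo{(\nabla u_0)\eta_\e}{\e}\cdot\nabla\widetilde w$. The coefficient equals $(|Q\cap\omega|-1)\widehat A$ plus the $\e$-rescaling of the $1$-periodic matrix $B_0=\{(B_0)_{ij}^{\a\b}\}$ with $(B_0)_{ij}^{\a\b}=\widehat a_{ij}^{\a\b}-\one_+a_{ik}^{\a\g}\partial_k\X_j^{\g\b}$, and $B_0$ satisfies the hypotheses of Lemma~\ref{fourteen}: $\partial_i(B_0)_{ij}^{\a\b}=0$ in $\R^d$ is precisely the weak form of~\eqref{nine} with its Neumann condition $\sigma_1(\X_j^\b)=0$ on $\dd\omega$ (so that the flux extended by zero over the perforations is genuinely divergence-free), and $\int_Q(B_0)_{ij}^{\a\b}=0$ is the definition~\eqref{eight} of $\widehat A$. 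Hence Lemma~\ref{fourteen} supplies a $1$-periodic $\pi\in H^1_{\mathrm{loc}}(\R^d)$ with $\partial_k\pi_{kij}^{\a\b}=(B_0)_{ij}^{\a\b}$ and $\pi_{kij}^{\a\b}=-\pi_{ikj}^{\a\b}$. For the $B_0^\e$ part one writes $(B_0)^\e_{ij}=\e\,\partial_k(\pi^\e_{kij})$ and integrates by parts in $x_k$: because $\operatorname{supp}\eta_\e\subset\{\operatorname{dist}(\cdot,\dd\Omega)\ge3\e\}$ and $K_\e,K_\e^2$ enlarge supports by at most $2\e$, the factor $\smoothtwo{(\nabla u_0)\eta_\e}{\e}$ vanishes near $\dd\Omega$ so the boundary term drops, and the skew-symmetry of $\pi$ annihilates the contraction against $\partial_k\partial_i\widetilde w^\a$ (justified by approximating the $H^1$ map $\widetilde w$ by smooth ones); what remains is $-\e\int_\Omega\pi^\e_{kij}\,\partial_k\bigl(\smoothtwo{(\nabla u_0)\eta_\e}{\e}\bigr)_j^\b\,\partial_i\widetilde w^\a$, and since $\partial_k\smoothtwo{(\nabla u_0)\eta_\e}{\e}=\smoothone{\smoothone{\partial_k[(\nabla u_0)\eta_\e]}{\e}}{\e}$, Lemma~\ref{fifteen} bounds it by $C\e\|\pi\|_{L^2(Q)}\|\smoothone{\nabla[(\nabla u_0)\eta_\e]}{\e}\|_{L^2(\Omega)}\|w\|_{H^1(\Omega_\e)}$, i.e. again $\e\|\smoothone{(\nabla^2u_0)\eta_\e}{\e}\|_{L^2(\Omega)}$ plus a $\nabla\eta_\e$ piece. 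For the constant part $(|Q\cap\omega|-1)\int_\Omega\widehat A\,\smoothtwo{(\nabla u_0)\eta_\e}{\e}\cdot\nabla\widetilde w$ one integrates by parts (boundary term again zero by the support of $\smoothtwo{(\nabla u_0)\eta_\e}{\e}$), uses $\widehat a_{ij}^{\a\b}\partial_i\partial_j u_0^\b=0$ ($\mathcal L_0(u_0)=0$, $\widehat A$ constant) together with the fact that $K_\e$ commutes with constant coefficients to kill the $\nabla^2u_0$ contribution, and is left with $-(|Q\cap\omega|-1)\int_\Omega\widehat a_{ij}^{\a\b}\,\smoothtwo{(\partial_j u_0^\b)\,\partial_i\eta_\e}{\e}\,\widetilde w^\a$, which carries $\nabla\eta_\e$ and $\widetilde w$ and is therefore a boundary-layer term (its integrand is supported in $\mathcal O_{6\e}$), controlled as in $\mathcal I_1$ via Lemma~\ref{twentyone}. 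Collecting the five estimates gives the claimed bound.

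I expect the main obstacle to be the analysis of $\mathcal I_4$: verifying that $B_0$ meets the divergence-free hypothesis of Lemma~\ref{fourteen} \emph{in all of $\R^d$} — this is where the Neumann condition in~\eqref{nine} across the perforations is essential, and essentially the only place the perforated geometry enters the oscillation estimate — and then carrying out the two integrations by parts rigorously when $\widetilde w$ is merely $H^1$, so the $\pi$-against-$\nabla^2\widetilde w$ term must be removed through the skew-symmetry $\pi_{kij}^{\a\b}=-\pi_{ikj}^{\a\b}$ by an approximation argument, and the boundary terms must be seen to vanish from the support of $\smoothtwo{(\nabla u_0)\eta_\e}{\e}$. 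A secondary, bookkeeping-heavy point is that the constant-matrix remainder of $\mathcal I_4$ must not be estimated crudely (it is $O(1)$ otherwise) but only after the integration by parts that uses $\mathcal L_0(u_0)=0$ to excise the second-derivative contribution, leaving the $\nabla\eta_\e$-localized piece; this is exactly what makes the placement $\operatorname{supp}\eta_\e\subset\{\operatorname{dist}(\cdot,\dd\Omega)\ge3\e\}$ in~\eqref{eleven} and the Poincar\'e-type Lemma~\ref{twentyone} indispensable.
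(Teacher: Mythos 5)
Your handling of $I_1$, $I_2$, $I_3$ and $I_5$ is essentially the paper's argument (boundary-layer terms via the support of $\nabla\eta_\e$ and $1-\eta_\e$ together with Lemma~\ref{twentyone}, the triangle-inequality splitting of $\nabla u_0-\smoothtwo{(\nabla u_0)\eta_\e}{\e}$ for $I_3$, and Lemma~\ref{fifteen} for $I_5$), and your observation that the Neumann condition in~\eqref{nine} is what makes $\one_+A\nabla\X$ divergence-free across the perforations is exactly the right point. The genuine problem is in $I_4$. You peel off a constant and apply Lemma~\ref{fourteen} to $B_0=\widehat A-\one_+A\nabla\X$, asserting that $\int_Q B_0=0$ ``is the definition~\eqref{eight} of $\widehat A$.'' But~\eqref{eight} defines $\widehat a_{ij}^{\a\b}$ as the \emph{average} $\dashint_{Q\cap\omega}a_{ik}^{\a\g}\partial_k\X_j^{\g\b}$, i.e. $\int_{Q\cap\omega}a_{ik}^{\a\g}\partial_k\X_j^{\g\b}\,dy=|Q\cap\omega|\,\widehat a_{ij}^{\a\b}$; this normalization is precisely the reason the factor $|Q\cap\omega|$ is carried through Lemma~\ref{twenty}, and it is what the paper's proof uses when it asserts that the \emph{full} bracket $B=|Q\cap\omega|\widehat A-\one_+A\nabla\X$ meets the hypotheses of Lemma~\ref{fourteen}. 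With that normalization, $\int_Q(B_0)_{ij}^{\a\b}=(1-|Q\cap\omega|)\widehat a_{ij}^{\a\b}\neq0$ whenever $\omega\subsetneq\R^d$, so $B_0$ fails the mean-zero hypothesis and no $1$-periodic $\pi$ with $\partial_k\pi_{kij}^{\a\b}=(B_0)_{ij}^{\a\b}$ can exist (integrate over $Q$ and use periodicity). Your flux corrector for $B_0$ therefore does not exist, and the central step of your $I_4$ estimate breaks.

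The repair is to apply Lemma~\ref{fourteen} directly to $B=|Q\cap\omega|\widehat A-\one_+A\nabla\X$, which is divergence-free and mean-zero exactly because~\eqref{eight} averages over $Q\cap\omega$; then your integration by parts and the antisymmetry of $\pi$ go through verbatim, and your entire separate treatment of the constant piece $(|Q\cap\omega|-1)\widehat A$ (the extra integration by parts using $\mathcal L_0(u_0)=0$) becomes unnecessary. Indeed the mean-zero part of your $B_0$ is precisely the paper's $B$, and its nonzero mean cancels the constant you peeled off, so your decomposition, once corrected, collapses back to the paper's. (If you instead adopt the normalization $\widehat A=\int_{Q\cap\omega}A\nabla\X$, suggested by the plain integral written in Lemma~\ref{sixtyeight}, then $B_0$ is mean-zero and your route, including the constant-term maneuver, can be made to work; but you cannot have both $\int_QB=0$ and $\int_QB_0=0$ unless $|Q\cap\omega|=1$, so you must state which convention is in force rather than citing the definition as if it gave your claim.) Two minor points: your boundary-layer remainders live in $\mathcal O_{6\e}$, so Lemma~\ref{twentyone} needs the obvious restatement for that slightly larger collar; and the chain of inequalities you write for $I_1$ in fact produces $\|\nabla u_0\|_{L^2(\mathcal O_{4\e})}$, which is what the paper's proof and Lemma~\ref{twentyseven} use (the $\|u_0\|_{L^2(\mathcal O_{4\e})}$ in the statement is a typo in the paper).
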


\begin{proof}
By Lemma~\ref{twenty},
\begin{equation}\label{seventy}
\dint_{\Omega_\e}A^\e\nabla r_\e\cdot\nabla w=I_1+I_2+I_3+I_4+I_5,
\end{equation}
where
\begin{align*}
I_1 &= |Q\cap\omega|\dint_{\Omega}\widehat{A}\nabla u_0\cdot\nabla\eta_\e\widetilde{w},\\
I_2 &= -|Q\cap\omega|\dint_{\Omega}(1-\eta_\e)\widehat{A}\nabla u_0\cdot\nabla\widetilde{w},\\
I_3 &= \dint_\Omega\left[|Q\cap\omega|\widehat{A}-\one_+^\e A^\e\right]\left[\nabla u_0-\smoothtwo{(\nabla u_0)\eta_\e}{\e}\right]\cdot\nabla \widetilde{w},\\
I_4 &= \dint_\Omega\left[|Q\cap\omega|\widehat{A}-\one_+^\e A^\e\nabla\X^\e\right]\smoothtwo{(\nabla u_0)\eta_\e}{\e}\cdot\nabla \widetilde{w},\\
I_5 &= -\e\dint_{\Omega_\e}A^\e\chi^\e\nabla\smoothtwo{(\nabla u_0)\eta_\e}{\e}\cdot\nabla w,
\end{align*}
and $w\in H^1(\Omega_\e,\Gamma_\e;\R^d)$.  According to~\eqref{eleven}, $\text{supp}(\nabla\eta_\e)\subset\mathcal{O}_{4\e}$, where $\mathcal{O}_{4\e}=\{x\in\Omega\,:\,\text{dist}(x,\dd\Omega)<4\e\}$.  Moreover, $|\nabla \eta_\e|\leq C\e^{-1}$.  Hence, Lemma~\ref{twentyone}, Lemma~\ref{sixtyeight}, and~\eqref{eleven} imply
\[
|I_1|\leq C\e^{-1}\dint_{\mathcal{O}_{4\e}}|\nabla u_0\cdot\widetilde{w}|\leq C\|\nabla u_0\|_{L^2(\mathcal{O}_{4\e})}\|\nabla \widetilde{w}\|_{L^2(\Omega)}.
\]
Since $\text{supp}(1-\eta_\e)\subset\mathcal{O}_{4\e}$ and $\eta_\e\leq 1$, Lemma~\ref{sixtynine} and~Lemma~\ref{sixtyeight} imply
\begin{align}
|I_2|&\leq C\dint_{\mathcal{O}_{4\e}}\left|\widehat{A}\nabla u_0\cdot\nabla\widetilde{w}\right|\leq C\|\nabla u_0\|_{L^2(\mathcal{O}_{4\e})}\|\nabla \widetilde{w}\|_{L^2(\Omega)}.\nonumber
\end{align}
By Theorem~\ref{thirteen},
\begin{equation}\label{twentytwo}
|I_1+I_2|\leq C\|\nabla u_0\|_{L^2(\mathcal{O}_{4\e})}\|w\|_{H^1(\Omega_\e)}.
\end{equation}

Again, since $\text{supp}(1-\eta_\e)\subset\mathcal{O}_{4\e}$ (see~\eqref{eleven}),
\begin{align*}
&\|\nabla u_0-\smoothtwo{(\nabla u_0)\eta_\e}{\e}\|_{L^2(\Omega)} \\ 
&\hspace{10mm}\leq \|(1-\eta_\e)\nabla u_0\|_{L^2(\Omega)}+\|(\nabla u_0)\eta_\e-\smoothone{(\nabla u_0)\eta_\e}{\e}\|_{L^2(\Omega)} \\
&\hspace{20mm}+\|\smoothone{(\nabla u_0)\eta_\e-\smoothone{(\nabla u_0)\eta_\e}{\e}}{\e}\|_{L^2(\Omega)} \\
&\hspace{10mm}\leq\|\nabla u_0\|_{L^2(\mathcal{O}_{4\e})}+C\|(\nabla u_0)\eta_\e-\smoothone{(\nabla u_0)\eta_\e}{\e}\|_{L^2(\Omega)}.
\end{align*}
Therefore,
\begin{align}
|I_3|&\leq C\|\nabla u_0-\smoothtwo{(\nabla u_0)\eta_\e}{\e}\|_{L^2(\Omega)}\|w\|_{H^1(\Omega_\e)} \nonumber\\
&\leq C\left\{\|\nabla u_0\|_{L^2(\mathcal{O}_{4\e})} \right. \nonumber\\
&\hspace{10mm}\left.+\|(\nabla u_0)\eta_\e-\smoothone{(\nabla u_0)\eta_\e}{\e}\|_{L^2(\Omega)} \right\}\|w\|_{H^1(\Omega_\e)}.\label{twentythree}
\end{align}

Set $B=|Q\cap\omega|\widehat{A}-\one_+ A\nabla\X$.  By~\eqref{eight} and~\eqref{nine}, $B$ satisfies the assumptions of Lemma~\ref{fourteen}.  Therefore, there exists $\pi=\{\pi_{kij}^{\a\b}\}$ that is 1-periodic with
\[
\dfrac{\dd}{\dd y_k}\pi_{kij}^{\a\b}=b_{ij}^{\a\b}\,\,\,\text{ and }\,\,\,\pi_{kij}^{\a\b}=-\pi_{ikj}^{\a\b},
\]
where
\[
b_{ij}^{\a\b}=|Q\cap\omega|\widehat{a}_{ij}^{\a\b}-\one_+a_{ik}^{\a\g}\dfrac{\dd}{\dd y_k}\X_{j}^{\g\b}.
\]
Moreover, $\|\pi_{ij}^{\a\b}\|_{H^1(Q)}\leq C$ for some constant $C$ depending on $\kappa_1$, $\kappa_2$, and $\omega$.  Hence, integrating by parts gives
\begin{align*}
\dint_{\Omega}b_{ij}^{\a\b\e}\smoothtwo{\dfrac{\dd u_0^\b}{\dd x_j}\eta_\e}{\e}\dfrac{\dd\widetilde{w}^\a}{\dd x_i} &=-\e\dint_{\Omega}\pi_{kij}^{\a\b\e}\dfrac{\dd}{\dd x_k}\left[\smoothtwo{\dfrac{\dd u_0^\b}{\dd x_j}\eta_\e}{\e}\dfrac{\dd\widetilde{w}^\a}{\dd x_i}\right] \\
&=-\e\dint_{\Omega}\pi_{kij}^{\a\b\e}\dfrac{\dd}{\dd x_k}\left[\smoothtwo{\dfrac{\dd u_0^\b}{\dd x_j}\eta_\e}{\e}\right]\dfrac{\dd\widetilde{w}^\a}{\dd x_i} ,
\end{align*}
since
\[
\dint_{\Omega}\pi_{kij}^{\a\b\e}\smoothtwo{\dfrac{\dd u_0^\b}{\dd x_j}\eta_\e}{\e}\dfrac{\dd^2\widetilde{w}^\a}{\dd x_k\dd x_i}=0
\]
due to the anit-symmetry of $\pi$.  Thus, by Lemma~\ref{fifteen}, and~\eqref{eleven},
\begin{align}
|I_4|&\leq C\e\|\pi^\e\nabla\smoothtwo{(\nabla u_0)\eta_\e}{\e}\|_{L^2(\Omega)}\|w\|_{H^1(\Omega_\e)} \nonumber\\
&\leq C\left\{\|\nabla u_0\|_{L^2(\mathcal{O}_{4\e})}+\e\|\smoothone{(\nabla^2 u_0)\eta_\e}{\e}\|_{L^2(\Omega)}\right\}\|w\|_{H^1(\Omega_\e)}.\label{twentyfour}
\end{align}

Finally, by Lemma~\ref{fifteen}, and~\eqref{eleven},
\begin{align}
|I_5|\leq C\left\{\|\nabla u_0\|_{L^2(\mathcal{O}_{4\e})}+\e\|\smoothone{(\nabla^2 u_0)\eta_\e}{\e}\|_{L^2(\Omega)}\right\}\|w\|_{H^1(\Omega_\e)}\label{twentyfive}
\end{align}
The desired estimate follows from~\eqref{seventy},~\eqref{twentytwo},~\eqref{twentythree},~\eqref{twentyfour}, and~\eqref{twentyfive}.
\end{proof}

\begin{lemm}\label{twentyseven}
For $w\in H^1(\Omega_\e,\Gamma_\e;\R^d)$,
\[
\left|\dint_{\Omega_\e}A^\e\nabla r_\e\cdot\nabla w\right|\leq C\e^{1/2}\|f\|_{H^1(\dd\Omega)}\|w\|_{H^1(\Omega_\e)} 
\]
\end{lemm}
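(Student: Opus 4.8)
The plan is to combine the estimate from Lemma~\ref{twentysix} with standard elliptic estimates for the homogenized solution $u_0$. Lemma~\ref{twentysix} reduces everything to controlling three quantities by $C\e^{1/2}\|f\|_{H^1(\dd\Omega)}$: the boundary-layer term $\|\nabla u_0\|_{L^2(\mathcal{O}_{4\e})}$, the smoothing defect $\|(\nabla u_0)\eta_\e-\smoothone{(\nabla u_0)\eta_\e}{\e}\|_{L^2(\Omega)}$, and the second-derivative term $\e\|\smoothone{(\nabla^2 u_0)\eta_\e}{\e}\|_{L^2(\Omega)}$. First I would record the a priori bound for the homogenized problem~\eqref{seven}: since $\widehat{A}$ is constant and satisfies~\eqref{two},~\eqref{three} with constants $\widehat\kappa_1,\widehat\kappa_2$ by Lemma~\ref{sixtyeight}, and $\Omega$ is Lipschitz, elliptic regularity with boundary data $f\in H^1(\dd\Omega)$ gives $\|u_0\|_{H^{3/2}(\Omega)}\leq C\|f\|_{H^1(\dd\Omega)}$ — this is the natural regularity for the Dirichlet problem on a Lipschitz domain with $H^1$ boundary data (trace theorem places $f$ in the image of $H^{3/2}(\Omega)$, and $\mathcal{L}_0$ is an isomorphism at that level). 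I would cite this from one of~\cite{book1},~\cite{yellowbook}, or~\cite{book2}.

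Next I would handle each of the three terms. For $\|\nabla u_0\|_{L^2(\mathcal{O}_{4\e})}$: apply Lemma~\ref{sixtynine} to each component of $\nabla u_0$, which lies in $H^{1/2}(\Omega)\subset H^1$-trace-scale — more carefully, Lemma~\ref{sixtynine} as stated needs $\nabla u_0\in H^1(\Omega)$, i.e. $u_0\in H^2(\Omega)$, so I would either invoke Lemma~\ref{sixtynine} with the $H^{3/2}$-version of the near-boundary estimate (the $r^{1/2}$ gain persists by interpolation, since for $g\in H^s$ with $s>1/2$ one has $\|g\|_{L^2(\mathcal{O}_r)}\le Cr^{s-1/2}\|g\|_{H^s}$ when $s\le 1$, and here $s=1/2$ is borderline — so in fact one wants $\nabla u_0\in H^{1/2}$ giving $\|\nabla u_0\|_{L^2(\mathcal O_r)}\lesssim r^{0}$, which is not enough). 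The honest route is: $u_0\in H^{3/2}$ forces $\nabla u_0\in H^{1/2}$, and $\|\nabla u_0\|_{L^2(\mathcal O_{4\e})}$ does \emph{not} obviously beat a power of $\e$ at that regularity. I expect the paper gets around this by using that $\mathcal L_0$ has \emph{constant} coefficients, so interior estimates give $u_0\in H^2_{\mathrm{loc}}$ with $\|\nabla^2 u_0\|_{L^2(\Omega\setminus\mathcal O_\delta)}\le C\delta^{-1}\|f\|_{H^1(\dd\Omega)}$, and then one applies Lemma~\ref{sixtynine}-type reasoning on the good part while estimating the thin layer $\mathcal O_{4\e}$ directly — combined this yields $\|\nabla u_0\|_{L^2(\mathcal O_{4\e})}\le C\e^{1/2}\|f\|_{H^1(\dd\Omega)}$. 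This interplay between the boundary-layer width $\e$ and the blow-up of $\|\nabla^2 u_0\|$ near $\dd\Omega$ is the main obstacle, and I would present it as the one nonroutine estimate.

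For the smoothing defect, I would localize: where $\eta_\e=1$ and we are at distance $\ge 5\e$ from $\dd\Omega$ (so the $\e$-ball in the convolution stays inside $\Omega$), Lemma~\ref{sixteen} gives $\|(\nabla u_0)\eta_\e-\smoothone{(\nabla u_0)\eta_\e}{\e}\|_{L^2}\le C\e\|\nabla((\nabla u_0)\eta_\e)\|_{L^2}\le C\e(\|\nabla^2 u_0\|_{L^2(\Omega\setminus\mathcal O_{3\e})}+\e^{-1}\|\nabla u_0\|_{L^2(\mathcal O_{4\e})})$, and on $\mathcal O_{4\e}$ the defect is bounded trivially by $C\|\nabla u_0\|_{L^2(\mathcal O_{4\e})}$; using $\|\nabla^2 u_0\|_{L^2(\Omega\setminus\mathcal O_{3\e})}\le C\e^{-1/2}\|f\|_{H^1(\dd\Omega)}$ from the constant-coefficient interior estimate and the boundary-layer bound above, this is $\le C\e^{1/2}\|f\|_{H^1(\dd\Omega)}$. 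For the last term, $\e\|\smoothone{(\nabla^2 u_0)\eta_\e}{\e}\|_{L^2(\Omega)}\le C\e\|\nabla^2 u_0\|_{L^2(\Omega\setminus\mathcal O_{3\e})}\le C\e\cdot\e^{-1/2}\|f\|_{H^1(\dd\Omega)}=C\e^{1/2}\|f\|_{H^1(\dd\Omega)}$, using that $K_\e$ is bounded on $L^2$ and $\mathrm{supp}\,\eta_\e\subset\{\mathrm{dist}(\cdot,\dd\Omega)\ge 3\e\}$. Feeding all three bounds into Lemma~\ref{twentysix} and factoring out $\|w\|_{H^1(\Omega_\e)}$ gives exactly the claimed inequality; I would close by remarking that all constants depend only on $d,\Omega,\omega,\kappa_1,\kappa_2$.
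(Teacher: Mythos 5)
Your overall structure---feeding bounds on the three quantities $\|\nabla u_0\|_{L^2(\mathcal{O}_{4\e})}$, $\|(\nabla u_0)\eta_\e-\smoothone{(\nabla u_0)\eta_\e}{\e}\|_{L^2(\Omega)}$ and $\e\|\smoothone{(\nabla^2 u_0)\eta_\e}{\e}\|_{L^2(\Omega)}$ into Lemma~\ref{twentysix}---is the same as the paper's, but there is a genuine gap at exactly the point you yourself flag as ``the one nonroutine estimate,'' and the substitute you propose does not close it. You rightly note that $u_0\in H^{3/2}(\Omega)$ (so $\nabla u_0\in H^{1/2}$) is borderline and does not give $\|\nabla u_0\|_{L^2(\mathcal{O}_{4\e})}\leq C\e^{1/2}\|f\|_{H^1(\dd\Omega)}$; but ``interior estimates plus estimating the thin layer directly'' cannot supply it either, since the collar $\mathcal{O}_{4\e}$ is precisely where interior estimates degenerate, and weighted $H^{3/2}$-type regularity alone loses a logarithmic factor in such a collar estimate. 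Your second-derivative bounds are moreover inconsistent: the interior estimate~\eqref{twentyeight} combined with only the energy bound $\|\nabla u_0\|_{L^2(\Omega)}\leq C\|f\|_{H^{1/2}(\dd\Omega)}$ yields $\|\nabla^2 u_0\|_{L^2(\Omega\setminus\mathcal{O}_{\delta})}\leq C\delta^{-1}\|f\|$, which is what you state first, whereas the bound you actually use in the final step, $\|\nabla^2 u_0\|_{L^2(\Omega\setminus\mathcal{O}_{3\e})}\leq C\e^{-1/2}\|f\|_{H^1(\dd\Omega)}$, does not follow ``from the constant-coefficient interior estimate'' alone. With only the $\delta^{-1}$ rate, the last term gives $\e\cdot\e^{-1}=O(1)$, not $O(\e^{1/2})$, so the lemma is not proved.

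The missing ingredient is the $L^2$ (Rellich-type) regularity estimate for the constant-coefficient Dirichlet problem in a Lipschitz domain, namely $\|(\nabla u_0)^*\|_{L^2(\dd\Omega)}\leq C\|f\|_{H^1(\dd\Omega)}$ for the nontangential maximal function of $\nabla u_0$ (the paper's~\eqref{twentynine}, from~\cite{dahlberg}). This single estimate delivers both of the quantitative bounds you need: by the coarea formula, $\|\nabla u_0\|_{L^2(\mathcal{O}_{4\e})}\leq C\e^{1/2}\|(\nabla u_0)^*\|_{L^2(\dd\Omega)}$ (the paper's~\eqref{thirty}), and combining~\eqref{twentyeight} with the uniform control of $\nabla u_0$ on the parallel surfaces $\dd\mathcal{O}_t\cap\Omega$ coming from $(\nabla u_0)^*$, one integrates $t^{-2}$ over $t\in(3\e,C_0)$ to get $\|(\nabla^2 u_0)\eta_\e\|_{L^2(\Omega)}\leq C\e^{-1/2}\|f\|_{H^1(\dd\Omega)}$ (the paper's~\eqref{thirtyone}). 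Once these two estimates are in hand, the remainder of your argument---the smoothing defect via Lemma~\ref{sixteen} and the product rule, the last term via $L^2$-boundedness of $K_\e$ and the support of $\eta_\e$, then Lemma~\ref{twentysix}---coincides with the paper's proof.
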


\begin{proof}
Recall that $u_0$ satisfies $\mathcal{L}_0(u_0)=0$ in $\Omega$, and so it follows from estimates for solutions in Lipschitz domains for constant-coefficient equations that
\begin{equation}\label{twentynine}
\|(\nabla u_0)^*\|_{L^2(\dd\Omega)}\leq C\|f\|_{H^1(\dd\Omega)},
\end{equation}
where $(\nabla u_0)^*$ denotes the nontangential maximal function of $\nabla u_0$ (see~\cite{dahlberg}).  By the coarea formula,
\begin{equation}\label{thirty}
\|\nabla u_0\|_{L^2(\mathcal{O}_{4\e})}\leq C\e^{1/2}\|(\nabla u_0)^*\|_{L^2(\dd\Omega)}\leq C\e^{1/2}\|f\|_{H^1(\dd\Omega)}.
\end{equation}

Notice that if $u_0$ solves~\eqref{seven}, then $\mathcal{L}_0(\nabla u_0)=0$ in $\Omega$, and so we may use the interior estimate for $\mathcal{L}_0$.  That is,
\begin{equation}\label{twentyeight}
|\nabla^2 u_0(x)|\leq\dfrac{C}{\d(x)}\left(\dashint_{B(x,\d(x)/8)}|\nabla u_0|^2\right)^{1/2},
\end{equation}
where $\d(x)=\text{dist}(x,\dd\Omega)$.  In particular,
\begin{align}
\|(\nabla^2 u_0)\eta_\e\|_{L^2(\Omega)} &\leq \left(\dint_{\Omega\backslash\mathcal{O}_{3\e}}|\nabla^2 u_0|^2\right)^{1/2} \nonumber\\
&\leq C\left(\dint_{\Omega\backslash\mathcal{O}_{3\e}}\dashint_{B(x,\d(x)/8)}\left|\dfrac{\nabla u_0(y)}{\d(x)}\right|^2\,dy\>dx\right)^{1/2} \nonumber\\
&\leq C\left(\dint_{3\e}^{C_0}t^{-2}\dint_{\dd\mathcal{O}_t\cap\Omega}\dashint_{B(x,t/8)}|\nabla u_0(y)|^2\,dy \>dS(x)\>dt\right)^{1/2} \nonumber\\
&\hspace{30mm}+C_1\left(\dint_{\Omega\backslash\mathcal{O}_{C_0}}|\nabla u_0|^2\right)^{1/2}\nonumber\\
&\leq C\|(\nabla u_0)^*\|_{L^2(\dd\Omega)}\left(\dint_{3\e}^{C_0}t^{-2}\,dt\right)^{1/2}+C_1\|\nabla u_0\|_{L^2(\Omega)} \nonumber\\
&\leq C\left\{\e^{-1/2}\|f\|_{H^1(\dd\Omega)}+\|f\|_{H^{1/2}(\dd\Omega)}\right\} \nonumber\\
&\leq C\e^{-1/2}\|f\|_{H^1(\dd\Omega)}\label{thirtyone}.
\end{align}
where $C_0$ is a constant depending on $\Omega$, and we've used~\eqref{eleven},~\eqref{twentyeight}, the coarea formula, energy estimates, and~\eqref{twentynine}.  Hence,
\begin{equation}\label{thirtytwo}
\e\|\smoothone{(\nabla^2 u_0)\eta_\e}{\e}\|_{L^2(\Omega)}\leq C\e^{1/2}\|f\|_{H^1(\dd\Omega)}.
\end{equation}

Finally, by Lemma~\ref{sixteen},
\begin{equation}\label{thirtythree}
\|(\nabla u_0)\eta_\e-\smoothone{(\nabla u_0)\eta_\e}{\e}\|_{L^2(\Omega)}\leq C\e^{1/2}\|f\|_{H^1(\dd\Omega)}.
\end{equation}
where the last inequality follows from~\eqref{eleven}, Lemma~\ref{sixteen}, and~\eqref{thirtyone}.  Equations \eqref{thirty}, \eqref{thirtytwo}, and~\eqref{thirtythree} together with Lemma~\ref{twentysix} give the desired estimate.
\end{proof}

\begin{proof}[Proof of Theorem~\ref{ten}]
Note $r_\e\in H^1(\Omega_\e,\Gamma_\e;\R^d)$, and so by Lemma~\ref{twentyseven} and~\eqref{three},
\begin{align*}
\|e(r_\e)\|^2_{L^2(\Omega_\e)} &\leq C\dint_{\Omega_\e}A^\e\nabla r_\e\cdot\nabla r_\e \\
&\leq C\e^{1/2}\|f\|_{H^1(\dd\Omega)}\|r_\e\|_{H^1(\Omega_\e)}.
\end{align*}
Lemma~\ref{fortysix} gives the desired estimate.
\end{proof}

%
%
%
%
%
\section{Interior Lipschitz Estimate}\label{section4}
%
%
%
%
%

In this section, we use Theorem~\ref{ten} to investigate interior Lipschitz estimates down to the scale $\e$.  In particular, we prove Theorem~\ref{nineteen}.  The proof of Theorem~\ref{nineteen} is based on the scheme used in~\cite{shen} to prove boundary Lipschitz estimates for solutions to~\eqref{five} in the case $\omega=\R^d$, which in turn is based on a more general scheme for establishing Lipschitz estimates presented in~\cite{smart} and adapted in~\cite{shen} and~\cite{armstrong}.

The following Lemma is essentially Cacciopoli's inequality in a perforated ball.  The proof is similar to a proof of the classical Cacciopoli's ineqaulity, but nevertheless we present a proof for completeness.

Throughout this section, let $B_\e(r)$ denote the perforated ball of radius $r$ centered at some $x_0\in\R^d$, i.e., $B_\e(r)=B(x_0,r)\cap\e\omega$.  Let $S_\e(r)=\dd(\e\omega)\cap B(x_0,r)$ and $\Gamma_\e(r)=\e\omega\cap\dd B(x_0,r)$.

\begin{lemm}\label{thirtysix}
Suppose $\mathcal{L}_\e(u_\e)=0$ in $B_\e(2)$ and $\sigma_\e(u_\e)=0$ on $S_\e(2)$.  There exists a constant $C$ depending on $\kappa_1$ and $\kappa_2$ such that
\[
\left(\dashint_{B_\e(1)}|\nabla u_\e|^2\right)^{1/2}\leq C\underset{q\in\R^d}{\inf}\left(\dashint_{B_\e(2)}|u_\e-q|^2\right)^{1/2}
\]
\end{lemm}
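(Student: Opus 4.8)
The plan is to run the standard Caccioppoli argument, adapted to the perforated ball. Fix $q \in \R^d$ and set $v_\e = u_\e - q$. Since $q$ is a constant vector, $\mathcal{L}_\e(v_\e) = 0$ in $B_\e(2)$ and $\sigma_\e(v_\e) = 0$ on $S_\e(2)$, and $\nabla v_\e = \nabla u_\e$. Choose a cut-off $\varphi \in C_0^\infty(B(x_0,2))$ with $0 \le \varphi \le 1$, $\varphi \equiv 1$ on $B(x_0,1)$, and $|\nabla \varphi| \le C$. The key point is that $w := \varphi^2 v_\e$ is an admissible test function: $\varphi$ vanishes near $\dd B(x_0,2)$, so $w \in H^1(B_\e(2), \Gamma_\e(2); \R^d)$, and there is no boundary term on $S_\e(2)$ precisely because $u_\e$ satisfies the Neumann-type condition $\sigma_\e(u_\e)=0$ there — this is the only place the perforation geometry enters, and it enters harmlessly.

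Plugging $w = \varphi^2 v_\e$ into the weak formulation gives
\[
\dint_{B_\e(2)} A^\e \nabla v_\e \cdot \nabla(\varphi^2 v_\e) = 0,
\]
and expanding $\nabla(\varphi^2 v_\e) = \varphi^2 \nabla v_\e + 2\varphi v_\e \otimes \nabla \varphi$ yields
\[
\dint_{B_\e(2)} \varphi^2\, A^\e \nabla v_\e \cdot \nabla v_\e = -2 \dint_{B_\e(2)} \varphi\, A^\e \nabla v_\e \cdot (v_\e \otimes \nabla\varphi).
\]
By the upper ellipticity bound~\eqref{three} and Cauchy--Schwarz, the right side is bounded by
\[
C \left( \dint_{B_\e(2)} \varphi^2 |\nabla v_\e|^2 \right)^{1/2} \left( \dint_{B_\e(2)} |\nabla\varphi|^2 |v_\e|^2 \right)^{1/2},
\]
while the lower bound~\eqref{three} controls the left side from below by $\kappa_1 \int \varphi^2 |\nabla v_\e|^2$. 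Absorbing the gradient factor via Young's inequality and using $|\nabla\varphi| \le C$, $\varphi \equiv 1$ on $B(x_0,1)$, we obtain
\[
\dint_{B_\e(1)} |\nabla u_\e|^2 \le C \dint_{B_\e(2)} |u_\e - q|^2.
\]
Dividing by the normalizing factor (the balls have comparable volume factors $1$ vs $2^d$, absorbed into $C$), taking square roots, and then taking the infimum over $q \in \R^d$ gives the claim.

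I do not expect any serious obstacle here; the argument is the textbook Caccioppoli estimate. The one subtlety worth stating explicitly is the admissibility of the test function on the perforation boundary: the condition $\sigma_\e(u_\e) = 0$ on $S_\e(2)$ is exactly what makes $\int_{B_\e(2)} A^\e \nabla u_\e \cdot \nabla w = 0$ hold for all $w \in H^1(B_\e(2), \Gamma_\e(2); \R^d)$ (not merely those vanishing near $S_\e$), so no correction terms arise from integrating by parts across the holes. Everything else — the constant depending only on $\kappa_1, \kappa_2$ — follows because the cut-off $\varphi$ can be chosen with $\e$-independent and $\omega$-independent bounds once the radii $1$ and $2$ are fixed.
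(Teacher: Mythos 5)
There is a genuine gap, and it is precisely at the step where you pass from the energy form to the full gradient. The coercivity hypothesis~\eqref{three} is an \emph{elasticity} condition: the lower bound $\kappa_1|\xi|^2\leq a_{ij}^{\a\b}\xi_i^\a\xi_j^\b$ is assumed only for \emph{symmetric} matrices $\xi$. Together with the symmetries~\eqref{two}, the form $A^\e\nabla v\cdot\nabla v$ only controls $|e(v)|^2$, the symmetric part of the gradient, pointwise from below; it does not dominate $\kappa_1|\nabla v|^2$. So your assertion that ``the lower bound~\eqref{three} controls the left side from below by $\kappa_1\int\varphi^2|\nabla v_\e|^2$'' is false for this system, and the Caccioppoli computation you run only yields
\[
\dint_{B_\e(2)}|e\bigl((u_\e-q)\varphi\bigr)|^2\leq C\dint_{B_\e(2)}|\nabla\varphi|^2|u_\e-q|^2 ,
\]
(after absorbing the commutator terms), not the corresponding bound for $|\nabla u_\e|^2\varphi^2$.

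The missing ingredient, which is how the paper closes the argument, is Korn's first inequality for perforated domains (Lemma~\ref{fortysix}): since $(u_\e-q)\varphi\in H^1(B_\e(2),\Gamma_\e(2);\R^d)$ (it vanishes on the outer spherical boundary $\Gamma_\e(2)$), one has $\|\nabla((u_\e-q)\varphi)\|_{L^2(B_\e(2))}\leq C\|e((u_\e-q)\varphi)\|_{L^2(B_\e(2))}$ with $C$ independent of $\e$, and then $\varphi\equiv 1$ on $B(1)$ gives the claim. This also corrects your remark that the perforation geometry ``enters harmlessly'' only through admissibility of the test function: the $\e$-uniformity of the Korn constant is a nontrivial geometric fact, resting on the extension operator of Theorem~\ref{thirteen}, and it is exactly where the hypotheses on $\omega$ (periodic structure, separated bounded components) are used. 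Your observation about the Neumann condition $\sigma_\e(u_\e)=0$ on $S_\e(2)$ making $w=\varphi^2(u_\e-q)$ an admissible test function is correct, but without the Korn step the proof does not reach the stated estimate on $\nabla u_\e$.
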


\begin{proof}
Let $\varphi\in C_0^\infty(B(2))$ satisfy $0\leq\varphi\leq 1$, $\varphi\equiv 1$ on $B(1)$, $|\nabla\varphi|\leq C_1$ for some constant $C_1$.  Let $q\in\R^d$, and set $w=(u_\e-q)\varphi^2$.  By~\eqref{nineteen} and H\"{o}lder's inequality,
\begin{align}
0&=\dint_{B_\e(2)}A^\e\nabla u_\e\nabla w \nonumber\\
&\geq C_2\dint_{B_\e(2)}|e(u_\e)|^2\varphi^2-C_3\dint_{B_\e(2)}|\nabla\varphi|^2|u_\e-q|^2\label{twelve}
\end{align}
for some constants $C_2$ and $C_3$ depending on $\kappa_1$ and $\kappa_2$.  In particular,
\[
\dint_{B_\e(2)}|e(u_\e\varphi)|^2\leq C\dint_{B_\e(2)}|\nabla\varphi|^2|u_\e-q|^2,
\]
where $C$ only depends on $\kappa_1$ and $\kappa_2$.  Since $\varphi\equiv 1$ in $B(1)$ and $u_\e\varphi\in H^1(B_\e(2),\Gamma_\e(2);\R^d)$, equation~\eqref{twelve} together with Lemma~\ref{fortysix} gives the desired estimate.
\end{proof}

We extend Lemma~\ref{thirtysix} to hold for a ball $B_\e(r)$ with $r>0$ by a convenient scaling technique---the so called ``blow-up argument''---often used in the study of homogenization.

\begin{lemm}\label{thirtyseven}
Suppose $\mathcal{L}_\e(u_\e)=0$ in $B_\e(2r)$ and $\sigma_\e(u_\e)=0$ on $S_\e(2r)$.  There exists a constant $C$ depending on $\kappa_1$ and $\kappa_2$ such that
\[
\left(\dashint_{B_\e(r)}|\nabla u_\e|^2\right)^{1/2}\leq \dfrac{C}{r}\underset{q\in\R^d}{\inf}\left(\dashint_{B_\e(2r)}|u_\e-q|^2\right)^{1/2}
\]
\end{lemm}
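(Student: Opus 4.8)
The plan is to reduce Lemma~\ref{thirtyseven} to Lemma~\ref{thirtysix} by rescaling. Fix $x_0\in\R^d$ and $r>0$, and define the rescaled function $v(x)=u_\e(rx)$, which is a function on $B(x_0/r,2)\cap\frac{1}{r}(\e\omega)$. The key observation is that $v$ satisfies an equation of the same type but with the small parameter $\e$ replaced by $\e/r$: indeed, since $\mathcal{L}_\e(u_\e)=0$, a direct chain-rule computation shows $\mathcal{L}_{\e/r}(v)=0$ in $B(x_0/r,2)\cap(\e/r)\omega$, and similarly the Neumann-type condition $\sigma_\e(u_\e)=0$ on $S_\e(2r)$ transforms into $\sigma_{\e/r}(v)=0$ on $\dd((\e/r)\omega)\cap B(x_0/r,2)$. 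The point is that the structure of $\mathcal{L}_\e$ — dividing space by $\e$ in the coefficient argument — interacts cleanly with the homothety $x\mapsto rx$, so the rescaled operator is again of the admissible form with the same coefficient matrix $A$ (hence the same $\kappa_1,\kappa_2$).

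First I would carry out this change of variables carefully, recording how $\e\omega$, $S_\e$, and the balls scale. Then I would apply Lemma~\ref{thirtysix} to $v$ (with its parameter $\e/r$ in the role of ``$\e$'', centered at $x_0/r$), obtaining
\[
\left(\dashint_{B(x_0/r,1)\cap(\e/r)\omega}|\nabla v|^2\right)^{1/2}\leq C\underset{q\in\R^d}{\inf}\left(\dashint_{B(x_0/r,2)\cap(\e/r)\omega}|v-q|^2\right)^{1/2},
\]
with $C$ depending only on $\kappa_1,\kappa_2$ (and implicitly on $\omega$, $d$). Finally I would undo the change of variables. Under $x\mapsto rx$ one has $\nabla v(x)=r\,\nabla u_\e(rx)$, and the scale-invariant averages $\dashint$ — normalized by $r^d$ rather than by volume — transform so that the extra powers of $r$ are bookkept explicitly. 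Unwinding gives a factor of $r$ on the left from $\nabla v$ versus $\nabla u_\e$ and no compensating factor on the right (since $|v-q|$ versus $|u_\e-q|$ involves no derivative), which produces exactly the claimed $C/r$ on the right-hand side; the infimum over $q$ is preserved since $q$ ranges over all of $\R^d$ in both formulations.

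The only mild subtlety — and the step I would be most careful about — is tracking the normalization constants in the $\dashint$ notation, since $\dashint_{B(x_0,\rho)\cap E}$ divides by $\rho^d$ rather than by $|B(x_0,\rho)\cap E|$; one must check that the homothety maps $B_\e(r)$ to $B(x_0/r,1)\cap(\e/r)\omega$ and $B_\e(2r)$ to $B(x_0/r,2)\cap(\e/r)\omega$ and that the $r^{-d}$ factors cancel appropriately against the Jacobian $r^d$ of the substitution, leaving precisely one surviving power of $r$ from the gradient. There is no real analytic obstacle here — Lemma~\ref{thirtysix} already did the substantive work (Caccioppoli plus Korn in the perforated ball) — so this lemma is purely a scaling bookkeeping argument, and the proof is short.

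\begin{proof}
Let $v(x)=u_\e(rx)$ for $x\in B(x_0/r,2)\cap(\e/r)\omega$. A direct computation using $\mathcal{L}_\e(u_\e)=0$ in $B_\e(2r)$ shows that $\mathcal{L}_{\e/r}(v)=0$ in $B(x_0/r,2)\cap(\e/r)\omega$, and the condition $\sigma_\e(u_\e)=0$ on $S_\e(2r)$ becomes $\sigma_{\e/r}(v)=0$ on $\dd((\e/r)\omega)\cap B(x_0/r,2)$; the coefficient matrix is still $A$, so the constants $\kappa_1$, $\kappa_2$ are unchanged. Applying Lemma~\ref{thirtysix} to $v$ with parameter $\e/r$ and center $x_0/r$ gives
\[
\left(\dashint_{B(x_0/r,1)\cap(\e/r)\omega}|\nabla v|^2\right)^{1/2}\leq C\underset{q\in\R^d}{\inf}\left(\dashint_{B(x_0/r,2)\cap(\e/r)\omega}|v-q|^2\right)^{1/2},
\]
with $C$ depending only on $\kappa_1$, $\kappa_2$ (and $d$, $\omega$). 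Changing variables $x\mapsto rx$ and using $\nabla v(x)=r\,(\nabla u_\e)(rx)$ together with the definition of $\dashint$, the left-hand side equals $r\left(\dashint_{B_\e(r)}|\nabla u_\e|^2\right)^{1/2}$ and the right-hand side equals $C\inf_{q\in\R^d}\left(\dashint_{B_\e(2r)}|u_\e-q|^2\right)^{1/2}$, where the $r^{-d}$ normalizations cancel the Jacobian. Dividing by $r$ yields the claim.
\end{proof}
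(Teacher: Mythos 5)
Your proposal is correct and is essentially identical to the paper's own proof: the paper also sets $U_\e(x)=u_\e(rx)$, observes $\mathcal{L}_{\e/r}(U_\e)=0$ with $\sigma_{\e/r}(U_\e)=0$ on the rescaled perforation, applies Lemma~\ref{thirtysix}, and unwinds the change of variables, with the single surviving factor of $r$ from $\nabla U_\e=r\nabla u_\e$ producing the $C/r$. Your extra care with the $r^{-d}$ normalization in the $\dashint$ notation is exactly the bookkeeping the paper does implicitly.
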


\begin{proof}
Let $U_\e(x)=u_\e(rx)$, and note $U_\e$ satisfies $\mathcal{L}_{\e/r}(U_\e)=0$ in $B_\e(2)$ and $\sigma_{\e/r}(U_\e)=0$ on $S_\e(2)$.  By Lemma~\ref{thirtysix},
\[
\left(\dashint_{B_{\e/r}(1)}|\nabla U_\e|^2\right)^{1/2}\leq C\underset{q\in\R^d}{\inf}\left(\dashint_{B_{\e/r}(2)}|U_\e-q|^2\right)^{1/2}
\]
for some $C$ independent of $\e$ and $r$.  Note $\nabla U_\e=r\nabla u_\e$, and so
\[
r^{1-d/2}\left(\dashint_{B_{\e}(r)}|\nabla u_\e|^2\right)^{1/2}\leq Cr^{-d/2}\underset{q\in\R^d}{\inf}\left(\dashint_{B_{\e}(2r)}|u_\e-q|^2\right)^{1/2},
\]
where we've made the substitution $rx\mapsto x$.  The desired inequality follows.
\end{proof}

The following lemma is a key estimate in the proof of Theorem~\ref{nineteen}.  Intrinsically, the following Lemma uses the convergence rate in Theorem~\ref{ten} to approximate the solution $u_\e$ with a ``nice'' function.  

\begin{lemm}\label{forty}
Suppose $\mathcal{L}_\e(u_\e)=0$ in $B_\e(3r)$ and $\sigma_\e(u_\e)=0$ on $S_\e(3r)$.  There exists a $v\in H^1(B(r);\R^d)$ with $\mathcal{L}_0(v)=0$ in $B(r)$ and
\[
\left(\dashint_{B_\e(r)}|u_\e-v|\right)^{1/2}\leq C\left(\dfrac{\e}{r}\right)^{1/2}\left(\dashint_{B_\e(3r)}|u_\e|^2\right)^{1/2}
\]
for some constant $C$ depending on $d$, $\omega$, $\kappa_1$, and $\kappa_2$
\end{lemm}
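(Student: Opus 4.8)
The plan is to derive Lemma~\ref{forty} from the convergence rate of Theorem~\ref{ten}, applied on a ball slightly larger than $B(x_0,r)$, after first rescaling to unit size by the blow-up argument already used in Lemma~\ref{thirtyseven}. Translating so that $x_0=0$ and setting $U_\e(x)=u_\e(rx)$, the function $U_\e$ solves $\mathcal{L}_{\e/r}(U_\e)=0$ in $B_{\e/r}(3)$ with vanishing conormal derivative on the corresponding perforation boundary; since $\mathcal{L}_0$ has constant coefficients and the averages $\dashint_{B_\e(\rho)}$ carry the normalization $\rho^{-d}$, a comparison function $v_0$ for $U_\e$ on $B(1)$ pulls back to $v(x)=v_0(x/r)$, which solves $\mathcal{L}_0(v)=0$ in $B(r)$ and satisfies the desired estimate with the factor $(\e/r)^{1/2}$. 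So I may assume $r=1$, $x_0=0$, and $\e$ small.

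At unit scale the first step is to select a good radius on which to impose Dirichlet data. Caccioppoli's inequality for the perforated ball (Lemma~\ref{thirtyseven}, valid since $\mathcal{L}_\e(u_\e)=0$ in $B_\e(3)$ and $\sigma_\e(u_\e)=0$ on $S_\e(3)$) bounds $\|\nabla u_\e\|_{L^2(B_\e(3/2))}$ by $C\,(\dashint_{B_\e(3)}|u_\e|^2)^{1/2}$, so the extension $\widetilde u_\e$ of $u_\e$ to $B(3/2)$ furnished by the extension operator for perforated domains obeys $\|\widetilde u_\e\|_{H^1(B(3/2))}\le C\,(\dashint_{B_\e(3)}|u_\e|^2)^{1/2}$. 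A Fubini/coarea argument over $t\in(1,3/2)$, using that the tangential gradient on $\dd B(0,t)$ is dominated by the full gradient, then produces $t_*\in(1,3/2)$ with $\|\widetilde u_\e\|_{H^1(\dd B(0,t_*))}\le C\,(\dashint_{B_\e(3)}|u_\e|^2)^{1/2}$.

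Now I would apply Theorem~\ref{ten} with $\Omega=B(0,t_*)$ and boundary datum $f=\widetilde u_\e$: $u_\e$ is the weak solution of~\eqref{five} on $(B_{t_*})_\e$ for this datum, since the conormal condition holds on $S_\e(t_*)\subset S_\e(3)$ and $u_\e-\widetilde u_\e$ vanishes on $(B_{t_*})_\e$, hence lies in $H^1((B_{t_*})_\e,\Gamma_\e(t_*);\R^d)$. Let $u_0$ be the corresponding homogenized solution, solving~\eqref{seven} on $B(0,t_*)$ with data $f$, and set $v:=u_0|_{B(1)}$, so that $\mathcal{L}_0(v)=0$ in $B(1)$. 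Theorem~\ref{ten} gives $\|u_\e-u_0-\e\chi^\e\smoothtwo{(\nabla u_0)\eta_\e}{\e}\|_{H^1((B_{t_*})_\e)}\le C\e^{1/2}\|f\|_{H^1(\dd B(0,t_*))}\le C\e^{1/2}(\dashint_{B_\e(3)}|u_\e|^2)^{1/2}$. To discard the corrector term, Lemma~\ref{fifteen} together with $\|K_\e\|_{L^2\to L^2}\le 1$ yields $\e\,\|\chi^\e\smoothtwo{(\nabla u_0)\eta_\e}{\e}\|_{L^2((B_{t_*})_\e)}\le C\e\,\|\nabla u_0\|_{L^2(B(0,t_*))}$, while a standard energy estimate for $\mathcal{L}_0$ (using the ellipticity of $\widehat A$ from Lemma~\ref{sixtyeight} and Korn's inequality) bounds $\|\nabla u_0\|_{L^2(B(0,t_*))}$ by $C\,\|\nabla\widetilde u_\e\|_{L^2(B(3/2))}\le C\,(\dashint_{B_\e(3)}|u_\e|^2)^{1/2}$. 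Combining these and using $\e\le 1$ at this scale gives $\|u_\e-v\|_{L^2(B_\e(1))}\le C\e^{1/2}(\dashint_{B_\e(3)}|u_\e|^2)^{1/2}$; since $\dashint_{B_\e(1)}|u_\e-v|^2=\|u_\e-v\|_{L^2(B_\e(1))}^2$, this is exactly the claim at $r=1$, and rescaling as above completes the proof.

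I expect the main obstacle to be the passage from the interior data $u_\e$—defined only on the perforated ball, with no a priori trace regularity on spheres—to a legitimate $H^1(\dd\Omega)$ Dirichlet datum to which Theorem~\ref{ten} can be applied. This is precisely where the perforated-domain extension operator enters, in a form valid on all of $H^1(B_\e(\rho);\R^d)$ rather than only on functions vanishing on the Dirichlet boundary (a mild strengthening of Theorem~\ref{thirteen}), together with Caccioppoli's inequality, which is needed to upgrade the $L^2$-control of $u_\e$ to $H^1$-control of its extension and hence of the selected boundary trace. The coarea/averaging step producing $t_*$ is the device that lets these two ingredients be used in tandem, and carrying the correct powers of $\e$ and $r$ through the rescaling requires some care, though it is otherwise routine.
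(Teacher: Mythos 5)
Your proposal follows essentially the same route as the paper's proof: rescale to $r=1$, combine Caccioppoli's inequality (Lemma~\ref{thirtyseven}) with the perforated-domain extension operator and a coarea selection of a radius $t\in[1,3/2]$ on which $\|\widetilde u_\e\|_{H^1(\dd B(t))}$ is controlled by $\|u_\e\|_{L^2(B_\e(3))}$, then apply Theorem~\ref{ten} on $\Omega=B(t)$ with datum $f=\widetilde u_\e$ and discard the corrector term via Lemma~\ref{fifteen} together with an energy estimate for $\mathcal{L}_0$. The only difference is that you explicitly flag the need for an extension operator defined on all of $H^1(B_\e(3);\R^d)$ rather than only on functions vanishing on the Dirichlet portion, a point the paper uses silently; otherwise the two arguments coincide.
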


\begin{proof}
With rescaling (see the proof of Lemma~\ref{thirtyseven}), we may assume $r=1$.  By Lemma~\ref{thirtyseven} and estimate~\eqref{thirtyeight} of Lemma~\ref{thirteen},
\[
\left(\dashint_{B(3/2)}|\widetilde{u}_\e|^2\right)^{1/2}+\left(\dashint_{B(3/2)}|\nabla \widetilde{u}_\e|^2\right)^{1/2}\leq C\left(\dashint_{B_\e(3)}|u_\e|^2\right)^{1/2},
\]
where $\widetilde{u}_\e=P_\e u_\e\in H^1(B(3);\R^d)$ and $P_\e$ is the linear extension operator provided in Lemma~\ref{thirteen}.  The coarea formula then implies there exists a $t\in [1,3/2]$ such that
\begin{equation}\label{thirtynine}
\|\nabla \widetilde{u}_\e\|_{L^2(\dd B(t))}+\|\widetilde{u}_\e\|_{L^2(\dd B(t))}\leq C\|u_\e\|_{L^2(B_\e(3))}.
\end{equation}
Let $v$ denote the solution to the Dirichlet problem $\mathcal{L}_0(v)=0$ in $B(t)$ and $v=\widetilde{u}_\e$ on $\dd B(t)$.  Note that $v=u_\e=\widetilde{u}_\e$ on $\Gamma_\e(t)$.  By Theorem~\ref{ten},
\[
\|u_\e-v\|_{L^2(B_\e(t))}\leq C\e^{1/2}\|\widetilde{u}_\e\|_{H^1(\dd B(t))}
\]
since
\[
\|\chi^\e\smoothtwo{(\nabla v)\eta_\e}{\e}\|_{L^2(B_\e(t))}\leq C\|\nabla v\|_{L^2(B(t))},
\]
where we've used notation consistent with Theorem~\ref{ten}.  Hence,~\eqref{thirtynine} gives
\[
\|u_\e-v\|_{L^2(B_\e(1))}\leq \|u_\e-v\|_{L^2(B_\e(t))}\leq C\e^{1/2}\|u_\e\|_{L^2(B_\e(3))}.
\]
\end{proof}

\begin{lemm}\label{fortytwo}
Suppose $\mathcal{L}_0(v)=0$ in $B(2r)$.  For $r\geq \e$, there exists a constant $C$ depending on $\omega,\kappa_1,\kappa_2$ and $d$ such that
\begin{equation}\label{fortythree}
\left(\dashint_{B(r)}|v|^2\right)^{1/2}\leq C\left(\dashint_{B_\e(2r)}|v|^2\right)^{1/2}
\end{equation}
\end{lemm}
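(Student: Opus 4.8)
The plan is to split $B(x_0,r)$ into the part contained in $\e\omega$ and the part contained in the perforations, bound the first trivially by $\int_{B_\e(2r)}|v|^2$, and estimate the perforations one hole at a time using quantitative unique continuation for the constant‑coefficient operator $\mathcal{L}_0$. Since $\widehat{A}$ is constant, $\mathcal{L}_0$ is scale invariant and $x\mapsto v(rx)$ again solves the homogenized system (the ratio $\e/r$ being unchanged), so we may take $r=1$; I treat the main range $\e\le c_0$ below, with $c_0=c_0(\omega)>0$ small, and comment on the rest afterwards. Write $\R^d\setminus\omega=\bigcup_j H_j$ as in Section~\ref{section2}, put $M=\sup_j\text{diam}\,H_j<\infty$, and let $B_j$ be the open $\mathfrak{g}^\omega$‑neighbourhood of $H_j$. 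Assumption~\eqref{sixtysix} gives $B_j\setminus H_j\subseteq\omega$, and for $\e\le c_0:=(2(M+\mathfrak{g}^\omega))^{-1}$, whenever $\e H_j\cap B(x_0,1)\ne\emptyset$ one has $\e H_j\subseteq B(x_0,1+\e M)$ and hence $\e B_j\subseteq B(x_0,2)$, so $\e B_j\setminus\e H_j\subseteq B_\e(2)$. Because $B(x_0,1)\cap\e\omega\subseteq B_\e(2)$ and $B(x_0,1)\setminus\e\omega\subseteq\bigcup_{j:\,\e H_j\cap B(x_0,1)\ne\emptyset}\e H_j$, the estimate reduces to the per‑hole bound
\[
\int_{\e H_j}|v|^2\ \le\ C\int_{\e B_j\setminus\e H_j}|v|^2,\qquad C=C(d,\omega,\kappa_1,\kappa_2),
\]
valid whenever $\mathcal{L}_0v=0$ in $\e B_j$. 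Granting it, a packing estimate (from $\text{diam}\,\e H_j\le\e M$ and the $\e\mathfrak{g}^\omega$‑separation of the $H_j$) bounds by $N_0=N_0(d,\omega)$ the number of sets $\e B_j$ meeting $B(x_0,2)$ that overlap a given one, so $\sum_j\int_{\e B_j\setminus\e H_j}|v|^2\le N_0\int_{B_\e(2)}|v|^2$; adding the per‑hole bounds gives $\int_{B(x_0,1)}|v|^2\le(1+CN_0)\int_{B_\e(2)}|v|^2$, which is the lemma once the $r^{-d}$ normalizations are restored.

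For the per‑hole bound I would invoke the three‑balls inequality for $\mathcal{L}_0$, classical for constant‑coefficient elliptic systems: $\mathcal{L}_0$ obeys the symmetry and Legendre conditions~\eqref{two},~\eqref{three} by Lemma~\ref{sixtyeight}, its solutions are real‑analytic, and the inequality is scale invariant, so its constant and exponent are independent of $\e$. After rescaling by $\e$, $\tilde v(x)=v(\e x)$ solves $\mathcal{L}_0\tilde v=0$ in $B_j$. Since $H_j$ is connected with diameter at most $M$ and is uniformly Lipschitz (hence quasiconvex inside its $\mathfrak{g}^\omega/2$‑neighbourhood, with constants depending only on the data of $\omega$), one can cover $H_j$ by a number, depending only on $\omega$, of balls of radius comparable to $\min(\mathfrak{g}^\omega,1)$, each joined to a fixed ball $D_1\subseteq B_j\setminus H_j$ by a chain of at most $N(\omega)$ overlapping such balls whose doubles lie in $B_j$. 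Iterating the three‑balls inequality along each chain gives, for every ball $D$ of the cover,
\[
\|\tilde v\|_{L^2(D)}\ \le\ C\,\|\tilde v\|_{L^2(D_1)}^{\theta}\,\|\tilde v\|_{L^2(B_j)}^{1-\theta}
\]
with $\theta=\theta(d,\omega,\kappa_1,\kappa_2)\in(0,1)$; summing over the finitely many $D$ and using $D_1\subseteq B_j\setminus H_j$ yields $\int_{H_j}|\tilde v|^2\le C\bigl(\int_{B_j\setminus H_j}|\tilde v|^2\bigr)^{\theta}\bigl(\int_{B_j}|\tilde v|^2\bigr)^{1-\theta}$. Writing $p=\int_{H_j}|\tilde v|^2$ and $q=\int_{B_j\setminus H_j}|\tilde v|^2$, so $\int_{B_j}|\tilde v|^2=p+q$, this reads $p\le Cq^{\theta}(p+q)^{1-\theta}$; splitting into the cases $p\le q$ and $p>q$ converts it into $p\le(2C)^{1/\theta}q$, and undoing the $\e$‑scaling gives the per‑hole bound.

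The remaining range $c_0<\e\le r$ involves only a compact set of ratios $r/\e$; after rescaling by $\e$ and moving the centre into $Q$ by $1$‑periodicity, the relevant data vary over a compact family on which $B_\e(2r)$ is non‑degenerate (it is not contained in a single perforation), and the same scheme --- now with a bounded number of perforations and with $D_1$ replaced by a ball of radius comparable to $\e$ inside $B_\e(2r)$ --- or, if one prefers, a soft compactness argument, supplies a uniform constant; alternatively one may simply restrict the lemma to $\e\le c(\omega)r$, which is all that Theorem~\ref{nineteen} uses. The essential difficulty is the per‑hole bound: an inequality of this shape is false for general $L^2$ functions and genuinely relies on the equation, and it cannot be obtained from a linear unique‑continuation estimate (bounding the supremum of $|v|$ over $\e H_j$ by an $L^2$ average of $v$ over $D_1$), which fails because constant‑coefficient elliptic systems have polynomial solutions of every degree that are small on a prescribed ball and large elsewhere. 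The three‑balls inequality, carrying the exponent $\theta$, is exactly what survives this, together with the elementary absorption step that restores a linear bound; the second point requiring care is the uniformity of all constants over the infinitely many --- but uniformly bounded, uniformly Lipschitz, uniformly separated --- perforations $H_j$.
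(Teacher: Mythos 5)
Your argument is essentially correct in its main range, but it takes a genuinely different and much heavier route than the paper. The paper does not use unique continuation at all: it decomposes $B(r)$ into the cells $\e(Q+z)$, builds for each cell a cutoff $\varphi$ that equals $1$ on the holes and whose gradient is supported in the material collar of width $\sim\mathfrak{g}^\omega$ guaranteed by~\eqref{sixtysix}, and then controls $\int_{(Q+z)\setminus\omega}|V|^2$ by $\int_{Q^*\cap\omega}|V|^2$ using only Poincar\'{e}'s inequality (since $V\varphi$ has compact support in the enlarged cell $Q^*$) and Caccioppoli's inequality for the constant-coefficient operator $\mathcal{L}_0$; summing over cells with bounded overlap gives~\eqref{fortythree}. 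In other words, a \emph{linear} hole-versus-collar bound is available by a pure energy argument, precisely because each hole is surrounded by material at a definite distance--- so your closing claim that an estimate of this shape ``cannot be obtained from a linear estimate'' and that the three-balls inequality is what is essentially needed is overstated; what fails is only the sup-over-hole versus small off-center ball bound, which the paper never needs. Your reduction to the per-hole bound, the packing/bounded-overlap count, the chaining of the three-spheres inequality (legitimate for the constant-coefficient system, with uniform constants over the finitely many hole shapes modulo $\mathbb{Z}^d$-translation), and the absorption step converting $p\leq Cq^{\theta}(p+q)^{1-\theta}$ into $p\leq Cq$ are all sound, so your proof works for $\e\leq c_0(\omega)r$; what it buys is robustness (it does not need the collar to surround the hole, only a nearby material ball), at the price of invoking quantitative unique continuation where Caccioppoli suffices. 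The weak point is the regime $c_0 r<\e\leq r$: the ``soft compactness'' sketch is not complete as stated (with the normalization $\|v_k\|_{L^2(B(x_0,1))}=1$ and $\|v_k\|_{L^2(B_\e(2))}\to0$ you have no a priori bound on $v_k$ near $\dd B(x_0,2)$ with which to extract a limit), and the fallback ``Theorem~\ref{nineteen} only uses $\e\leq c(\omega)r$'' is not literally accurate, since the iteration in Lemma~\ref{fortynine} runs down to $r\sim\e$; it can be repaired by a trivial enlargement of scale at the last step (losing a fixed factor), or by running your chaining argument from a material ball of radius $\gtrsim c_0$, but as written this range is only gestured at. (To be fair, the paper's own proof is also loose there: its enlarged cells $\e Q^{*}(z)$ only stay inside $B(2r)$ when $r\gtrsim\sqrt{d}\,\e$, not for all $r\geq\e$.)
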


\begin{proof}
Let 
\[
T_\e=\{z\in\mathbb{Z}^d\,:\,\e(Q+z)\cap B(r)\neq\emptyset\},
\]
and fix $z\in T_\e$.  Let $\{H_{k}\}_{k=1}^N$ denote the bounded, connected components of $\R^d\backslash\omega $ with $H_k\cap (Q+z)\neq \emptyset$.  Define $\varphi_k\in C_0^\infty(Q^*(z))$ by
\[
\begin{cases}
\varphi_k(x)=1,\,\,\,\text{ if }x\in H_k, \\
\varphi_k(x)=0,\,\,\,\text{ if }\text{dist}(x,H_k)>\frac{1}{4}\mathfrak{g}^\omega, \\
|\nabla\varphi_k|\leq C,
\end{cases}
\]
where $C$ depends on $\omega$, $\mathfrak{g}^\omega>0$ is defined in Section 2 by~\eqref{sixtysix}, and
\[
Q^*(z)=\bigcup_{j=1}^{3^d} (Q+z_j),\,\,\,z_j\in\mathbb{Z}^d\text{ and }|z-z_j|\leq \sqrt{d}.
\]
Set $\varphi=\sum_{k=1}^N\varphi_k\in C_0^\infty(Q^*)$, where $Q^*=Q^*(z)$.  Note by construction $\varphi\equiv 1$ in $Q^*\backslash\omega$.  

Set $V(x)=v(\e x)$.  Note $\mathcal{L}_0(V)=0$ in $Q+z$.  By Poincar\'{e}'s and Cacciopoli's inequalities,
\[
\dint_{(Q+z)\backslash\omega}|V|^2\leq\sum_{k=1}^{N}\dint_{H_k}|V|^2\leq C\dint_{Q^*}|\nabla (V\varphi)|^2\leq C\dint_{Q^*}|V|^2|\nabla\varphi|^2,
\]
where $C$ depends on $\omega$, $\kappa_1$, $\kappa_2$, and $d$ but is independent of $z$.  Specifically, since $\nabla\varphi=0$ in $Q^*\backslash\omega$ and $(Q+z)\subset Q^*$,
\[
\dint_{(Q+z)\cap\omega}|V|^2+\dint_{(Q+z)\backslash\omega}|V|^2\leq C\dint_{Q^*\cap\omega}|V|^2,
\]
where $C$ only depends on $\omega$, $\kappa_1$, $\kappa_2$, and $d$.  Making the change of variables $\e x\mapsto x$ gives
\[
\dint_{\e(Q+z)}|v|^2\leq C\dint_{\e (Q^*\cap\omega)}|v|^2.
\]
Summing over all $z\in T_\e$ gives the desired inequality, since there is a constant $M<\infty$ depending only on $d$ such that $Q^*(z_1)\cap Q^*(z_2)\neq\emptyset$ for at most $M$ coordinates $z_2\in\mathbb{Z}^d$ different from $z_1$.
\end{proof}

For $w\in L^2(B_\e(r);\R^d)$ and $\e,r>0$, set
\begin{equation}\label{fortyfive}
H_\e(r;w)=\dfrac{1}{r}\underset{\substack{M\in\R^{d\times d} \\ q\in\R^d}}{\inf}\left(\dashint_{B_\e(r)}|w-Mx-q|^2\right)^{1/2},
\end{equation}
and set
\[
H_0(r;w)=\dfrac{1}{r}\underset{\substack{M\in\R^{d\times d} \\ q\in\R^d}}{\inf}\left(\dashint_{B(r)}|w-Mx-q|^2\right)^{1/2}.
\]

\begin{lemm}\label{fortyfour}
Let $v$ be a solution of $\mathcal{L}_0(v)=0$ in $B(r)$.  For $r\geq \e$, there exists a $\theta\in (0,1/4)$ such that
\[
H_\e(\theta r;v)\leq \dfrac{1}{2} H_\e(r;v).
\]
\end{lemm}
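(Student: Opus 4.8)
The plan is to compare $H_\e(\theta r;v)$ with $H_0(\theta r;v)$, exploit the smoothness of solutions of the constant-coefficient system $\mathcal{L}_0$ to obtain decay of $H_0$, and then use Lemma~\ref{fortytwo} to pass back from the full ball to the perforated ball. First I would observe that $H_\e$ and $H_0$ are comparable on balls of radius $\gtrsim \e$: indeed $H_\e(\rho;v) \le \frac{1}{\rho}(\dashint_{B_\e(\rho)}|v-Mx-q|^2)^{1/2}$ trivially and, applying Lemma~\ref{fortytwo} to the function $x \mapsto v(x)-Mx-q$ (which also solves $\mathcal{L}_0=0$), one gets $H_0(\rho;v) \le C\, H_\e(2\rho;v)$ for $\rho \ge \e$. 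So it suffices to show that the scale-invariant quantity $H_0$ decays geometrically for solutions of $\mathcal{L}_0$ and then absorb the constants by choosing $\theta$ small.

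For the decay of $H_0$, I would use interior estimates for the constant-coefficient elliptic system $\mathcal{L}_0(v)=0$ (which satisfies ellipticity with constants $\widehat\kappa_1,\widehat\kappa_2$ by Lemma~\ref{sixtyeight}). Writing $v_1(x) = v(0) + \nabla v(0)\cdot x$ for the first-order Taylor polynomial of $v$ at the center of $B(r)$, the standard $C^{1,1}$ (or $C^2$) interior estimate for $\mathcal{L}_0$ gives, for $\rho \le r/2$,
\[
\Big(\dashint_{B(\rho)}|v - v_1|^2\Big)^{1/2} \le C\,\rho^2\,\|\nabla^2 v\|_{L^\infty(B(r/2))} \le C\,\frac{\rho^2}{r}\Big(\dashint_{B(r)}|v - \ell|^2\Big)^{1/2}
\]
for any affine $\ell$, after subtracting $\ell$ and using that $\mathcal{L}_0(v-\ell)=0$. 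Dividing by $\rho$ and taking the infimum over affine functions yields $H_0(\rho;v) \le C(\rho/r)\, H_0(r;v)$. Combining with the two comparisons above,
\[
H_\e(\theta r;v) \le C\,H_0(\theta r;v) \le C'\,\theta\, H_0(r;v) \le C''\,\theta\, H_\e(2r;v),
\]
but I need the right-hand side in terms of $H_\e(r;v)$, not $H_\e(2r;v)$; to fix this I would instead run the argument on a ball of radius $r$ throughout (comparing $H_\e(\theta r)$ to $H_0(2\theta r)$ via Lemma~\ref{fortytwo} with $2\theta r \le r/4$, then decaying $H_0(2\theta r;v) \le C\theta H_0(r/2;v)$ via interior estimates on $B(r/2)$, then bounding $H_0(r/2;v) \le C H_\e(r;v)$ by the trivial direction), so all quantities live on $B_\e(r)$ or sub-balls. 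Then choosing $\theta \in (0,1/4)$ small enough that $C''\theta \le 1/2$ gives the claim.

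The main obstacle is bookkeeping the radii so that the chain of inequalities terminates with $H_\e(r;v)$ rather than $H_\e(cr;v)$ for some $c>1$: this forces the decay step for $H_0$ to be applied on a ball strictly interior to $B(r)$, and one must check that $2\theta r \le r/8$ (say) so that Lemma~\ref{fortytwo} and the interior regularity estimate both apply with the required room. A secondary point is verifying that the constant $C$ from the comparison between $H_\e$ and $H_0$ (coming from Lemma~\ref{fortytwo} and the extension-free trivial bound) is independent of $\e$ and $r$ for $r \ge \e$, which it is since Lemma~\ref{fortytwo} is stated with such uniformity. Once $\theta$ is fixed by this process, it depends only on $d$, $\omega$, $\kappa_1$, $\kappa_2$ as required, and the proof is complete.
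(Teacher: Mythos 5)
Your proposal is correct and takes essentially the same route as the paper: the trivial comparison $H_\e(\rho;v)\le C\,H_0(\rho;v)$, geometric decay of $H_0$ via interior $C^2$-estimates for the constant-coefficient system $\mathcal{L}_0$, and Lemma~\ref{fortytwo} applied to $v-Mx-q$ to pass from the full ball back to the perforated ball, after which $\theta$ is chosen small. One small correction: in your final chain the justifications are swapped --- $H_\e(\theta r;v)\le C\,H_0(2\theta r;v)$ is the trivial inclusion, while $H_0(r/2;v)\le C\,H_\e(r;v)$ is exactly the step that needs Lemma~\ref{fortytwo} (hence $r/2\ge\e$, a restriction the paper's own proof implicitly uses as well), and your displayed Taylor estimate should carry $\rho^2/r^2$ rather than $\rho^2/r$, though the conclusion $H_0(\rho;v)\le C(\rho/r)\,H_0(r;v)$ you draw from it is the correct one.
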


\begin{proof}
There exists a constant $C_1$ depending on $d$ such that 
\[
H_\e(r;v)\leq C_1H_0(r;v)
\]
for any $r>0$.  It follows from interior $C^2$-estimates for elasticity systems with constant coefficients that there exists $\theta\in (0,1/4)$ with
\[
H_0(\theta r;v)\leq \dfrac{1}{2C_2} H_0(r/2;v),
\]
where $C_2=C_3C_1$ and $C_3$ is the constant in~\eqref{fortythree} given in Lemma~\ref{fortytwo}.  By Lemma~\ref{fortytwo}, we have the desired inequality.
\end{proof}

\begin{lemm}\label{fifty}
Suppose $\mathcal{L}_\e(u_\e)=0$ in $B_\e(2r)$ and $\sigma_\e(u_\e)=0$ on $S_\e(2r)$.  For $r\geq \e$,
\[
H_\e(\theta r; u_\e)\leq \dfrac{1}{2} H_\e(r;u_\e)+\dfrac{C}{r}\left(\dfrac{\e}{r}\right)^{1/2}\underset{q\in\R^d}{\inf}\left(\dashint_{B_\e(3r)}|u_\e-q|^2\right)^{1/2}
\]
\end{lemm}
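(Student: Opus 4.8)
The plan is to glue the approximation estimate of Lemma~\ref{forty} to the excess-decay estimate for the homogenized operator in Lemma~\ref{fortyfour}, using the $L^2$ triangle inequality inside the definition~\eqref{fortyfive} of $H_\e$. The starting observation is that $H_\e(\rho;w)=H_\e(\rho;w-q)$ for every constant vector $q\in\R^d$, and that $u_\e-q$ still satisfies $\mathcal{L}_\e(u_\e-q)=0$ in $B_\e(2r)$ and $\sigma_\e(u_\e-q)=0$ on $S_\e(2r)$. I therefore fix a minimizing constant $q_0$ realizing $\inf_{q\in\R^d}\big(\dashint_{B_\e(3r)}|u_\e-q|^2\big)^{1/2}$ and carry out the argument with $u_\e-q_0$ in place of $u_\e$; this is precisely what turns the crude right-hand side of Lemma~\ref{forty} into the $\inf_q$ appearing in the statement.

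First I apply Lemma~\ref{forty} to $u_\e-q_0$ to obtain $v\in H^1(B(r);\R^d)$ with $\mathcal{L}_0(v)=0$ in $B(r)$ and
\[
\Big(\dashint_{B_\e(r)}|u_\e-v|^2\Big)^{1/2}\le C\Big(\frac{\e}{r}\Big)^{1/2}\mathcal{E},\qquad \mathcal{E}:=\inf_{q\in\R^d}\Big(\dashint_{B_\e(3r)}|u_\e-q|^2\Big)^{1/2}.
\]
Because $v$ is defined on all of $B(r)$, the quantity $H_\e(\rho;v)$ makes sense for every $\rho\le r$. Applying the $L^2\big(B_\e(\rho)\big)$ triangle inequality to the competitor $v-Mx-q$ and then taking the infimum over $M$ and $q$ gives, for any $\rho\le r$,
\[
H_\e(\rho;u_\e)\le H_\e(\rho;v)+\frac{1}{\rho}\Big(\dashint_{B_\e(\rho)}|u_\e-v|^2\Big)^{1/2},
\]
and the analogous inequality with the roles of $u_\e$ and $v$ interchanged. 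Using $B_\e(\theta r)\subset B_\e(r)\subset B(r)$, at scale $\rho=\theta r$ the error integral is controlled by $\theta^{-d/2}\big(\dashint_{B_\e(r)}|u_\e-v|^2\big)^{1/2}$ after renormalizing the average, so with $C_\theta:=\theta^{-1-d/2}$ one gets $H_\e(\theta r;u_\e)\le H_\e(\theta r;v)+C_\theta r^{-1}\big(\dashint_{B_\e(r)}|u_\e-v|^2\big)^{1/2}$ and, at scale $\rho=r$, $H_\e(r;v)\le H_\e(r;u_\e)+r^{-1}\big(\dashint_{B_\e(r)}|u_\e-v|^2\big)^{1/2}$.

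Finally, since $\mathcal{L}_0(v)=0$ in $B(r)$ and $r\ge\e$, Lemma~\ref{fortyfour} supplies $H_\e(\theta r;v)\le\tfrac12 H_\e(r;v)$ with $\theta\in(0,1/4)$ the constant furnished there (depending only on $d$, $\omega$, $\kappa_1$, $\kappa_2$). Chaining the three estimates and inserting the bound on $\big(\dashint_{B_\e(r)}|u_\e-v|^2\big)^{1/2}$,
\[
H_\e(\theta r;u_\e)\le\tfrac12 H_\e(r;v)+\frac{C}{r}\Big(\frac{\e}{r}\Big)^{1/2}\mathcal{E}\le\tfrac12 H_\e(r;u_\e)+\frac{C}{r}\Big(\frac{\e}{r}\Big)^{1/2}\mathcal{E},
\]
which is the claim after renaming constants. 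I do not expect a genuine obstacle here: this is a bookkeeping step. The two points that need care are (i) transporting the minimizing constant $q_0$ through Lemma~\ref{forty}, so that the error is measured against $\mathcal{E}$ rather than $\|u_\e\|_{L^2(B_\e(3r))}$, and (ii) confirming that the geometric factor $C_\theta=\theta^{-1-d/2}$ is harmless — it is, since $\theta$ is fixed by Lemma~\ref{fortyfour} before this lemma is invoked (so there is no circularity), and $\theta\in(0,1/4)$ guarantees the inclusions $B_\e(\theta r)\subset B_\e(r)\subset B(r)$ used throughout.
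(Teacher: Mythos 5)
Your proposal is correct and follows essentially the same route as the paper: triangle inequality in the excess functional $H_\e$, the excess decay of Lemma~\ref{fortyfour} applied to the homogenized approximant $v$ from Lemma~\ref{forty}, and then chaining. The only (cosmetic) difference is that you subtract the minimizing constant $q_0$ before invoking Lemma~\ref{forty}, whereas the paper runs the argument with $u_\e$ itself and passes to the $\inf_q$ form at the end via the invariance of $H_\e$ under subtraction of constants.
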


\begin{proof}
With $r$ fixed, let $v_r\equiv v$ denote the function guaranteed in Lemma~\ref{forty}.  Observe then
\begin{align*}
H_\e(\theta r; u_\e) &\leq \dfrac{1}{\theta r}\left(\dashint_{B_\e(\theta r)}|u_\e-v|^2\right)^{1/2}+H_\e(\theta r;v) \\
&\leq \dfrac{C}{r}\left(\dashint_{B_\e( r)}|u_\e-v|^2\right)^{1/2}+\dfrac{1}{2}H_\e(r;v) \\
&\leq \dfrac{C}{r}\left(\dashint_{B_\e( r)}|u_\e-v|^2\right)^{1/2}+\dfrac{1}{2}H_\e(r;u_\e), \\
\end{align*}
where we've used Lemma~\ref{fortyfour}.  By Lemma~\ref{forty}, we have
\[
H_\e(\theta r; u_\e)\leq \dfrac{C}{r}\left(\dfrac{\e}{r}\right)^{1/2}\left(\dashint_{B_\e(3r)}|u_\e|^2\right)^{1/2}+\dfrac{1}{2}H_\e(r;u_\e).
\]
Since $H$ remains invariant if we subtract a constant from $u_\e$, the desired inequality follows.
\end{proof}

\begin{lemm}\label{fortynine}
Let $H(r)$ and $h(r)$ be two nonnegative continous functions on the interval $(0,1]$.  Let $0<\e<1/6$.  Suppose that there exists a constant $C_0$ with
\[
\begin{cases}
\underset{r\leq t\leq 3r}{\max} H(t)\leq C_0 H(3r), \\
\underset{r\leq t,s\leq 3r}{\max} |h(t)-h(s)|\leq C_0H(3r), \\
\end{cases}
\]
for any $r\in [\e,1/3]$.  We further assume
\[
H(\theta r)\leq \dfrac{1}{2}H(r)+C_0\left(\dfrac{\e}{r}\right)^{1/2}\left\{H(3r)+h(3r)\right\}
\]
for any $r\in [\e,1/3]$, where $\theta\in (0,1/4)$.  Then
\[
\underset{\e\leq r\leq 1}{\max}\left\{H(r)+h(r)\right\}\leq C\{H(1)+h(1)\},
\]
where $C$ depends on $C_0$ and $\theta$.
\end{lemm}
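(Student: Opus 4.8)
The plan is to iterate the key inequality along the geometric sequence $r_k = \theta^k$ and control the accumulated error. First I would fix $k_0 \geq 1$ to be the largest integer with $\theta^{k_0} \geq \e$, so that $\theta^{k_0} \in [\e, 1)$ and the recursion $H(\theta r) \leq \tfrac12 H(r) + C_0(\e/r)^{1/2}\{H(3r) + h(3r)\}$ may be applied at $r = r_k$ for $k = 0, 1, \dots, k_0 - 1$ (one checks $3 r_k \leq 1/3$ is not literally needed since the hypotheses are stated for $r \in [\e, 1/3]$ and $3r_k$ only appears inside $H(3r_k)$, $h(3r_k)$, which are controlled by the first two hypotheses; the recursion itself holds for $r \in [\e, 1/3]$, and since $\theta < 1/4$ we have $r_k \leq 1/4 < 1/3$ for $k \geq 1$, while the $k=0$ case is handled separately or absorbed). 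Set $G(r) = H(r) + h(r)$. Using the first hypothesis, $H(3 r_k) \leq C_0 H(3 r_{k-1})$ type bounds let us replace $H(3r_k)$ and $h(3r_k)$ by a constant multiple of $G(r_{k-1})$ (or $G(r_k)$) up to the oscillation control on $h$; the upshot is a recursion of the schematic form
\[
H(r_{k+1}) \leq \tfrac12 H(r_k) + C_1 \theta^{k/2} \e^{-1/2} \cdot (\text{something}) \cdot G_{\max},
\]
but this is not yet closed because of the $\e^{-1/2}$. The correct bookkeeping is to note $(\e/r_k)^{1/2} = (\e \theta^{-k})^{1/2}$, and since $\theta^{k_0} \geq \e > \theta^{k_0+1}$ we get $(\e/r_k)^{1/2} \leq \theta^{(k_0 - k)/2}$, which decays as $k$ runs from $k_0$ down. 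So I would iterate \emph{from $k = k_0$ downward is wrong} — rather, I iterate the recursion forward from $r_0 = 1$ to $r_{k_0}$, and the coefficient $(\e/r_k)^{1/2} \leq \theta^{(k_0-k)/2}$ is largest (close to $1$) precisely when $k$ is close to $k_0$, i.e.\ at the end of the iteration, which is exactly where the $\tfrac12$-contraction has already damped the initial data by $2^{-k}$.

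Concretely, unrolling $H(r_{k+1}) \leq \tfrac12 H(r_k) + \theta^{(k_0-k)/2} C_1 M$, where $M := \max_{\e \leq r \leq 1} G(r)$ (a priori finite by continuity on the compact range, once we know the max is attained — or one works with $\sup$ over $[\e,1]$ and argues it is finite since $G$ is continuous), gives
\[
H(r_{k_0}) \leq 2^{-k_0} H(1) + C_1 M \sum_{j=0}^{k_0 - 1} 2^{-j} \theta^{(k_0 - 1 - j)/2}.
\]
The sum $\sum_j 2^{-j}\theta^{(k_0-1-j)/2}$ is bounded by a constant depending only on $\theta$ (split at $j = k_0/2$: for small $j$ the factor $\theta^{(k_0-1-j)/2}$ is tiny; for large $j$ the factor $2^{-j}$ is tiny). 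More generally, the same computation applied starting at $r_m$ instead of $r_0$ bounds $H(r_k)$ for every $k \leq k_0$, and then the first two hypotheses interpolate to all $r \in [\e, 1]$, not just the discrete points $r_k$: any such $r$ lies in $[r_{k+1}, r_k] \subset [r_{k+1}, 3 r_{k+1}]$ for appropriate $k$, so $H(r) \leq C_0 H(3 r_{k+1}) \leq C_0 C_0 H(r_k)$ roughly, and similarly $h(r) \leq h(r_k) + C_0 H(\cdots)$. Combining, $G(r) \leq C(\theta, C_0)\{H(1) + h(1) + \text{(small)} \cdot M\}$.

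The main obstacle — and the one point requiring care — is the apparent circularity: the bound on $G(r)$ involves $M = \sup G$ on the right-hand side. This is resolved by a standard absorption argument: choose the splitting of the sum so that the coefficient of $M$ is a fixed fraction, say $\leq \tfrac12$, of what is needed; since $M$ is finite (continuity of $H, h$ on $[\e, 1]$, which is compact), we may take the supremum of the left side over $r \in [\e, 1]$, obtaining $M \leq C(\theta, C_0)\{H(1) + h(1)\} + \tfrac12 M$, hence $M \leq 2 C(\theta, C_0)\{H(1) + h(1)\}$. One must double-check that the "small" coefficient multiplying $M$ can genuinely be made $\leq \tfrac12$ \emph{uniformly in $\e$}: this is where the precise decay $(\e/r_k)^{1/2} \leq \theta^{(k_0-k)/2}$ pays off, because $\sum_{j \geq 0} 2^{-j}\theta^{(k_0-1-j)/2} \to 0$ is false — it does \emph{not} go to zero, it is a fixed constant — so instead the absorption must be arranged by exploiting that $C_1 M$ always appears multiplied by a convergent geometric-type sum whose value is a pure constant $C(\theta)$, and the genuine smallness that closes the loop comes from iterating the recursion \emph{deep enough} (many steps) before invoking the hypotheses to return to continuous $r$; equivalently, one proves the cleaner statement $H(r) \leq C\{H(1)+h(1)\}$ directly by the downward-then-upward bootstrapping just described, never needing $M$ on the right at all if the $h$-oscillation hypothesis is used to write $h(r_k) \leq h(1) + C_0 \sum_{j<k} H(3 r_j)$ and these $H$-terms are themselves already geometrically summable. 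I would present it via this last route to avoid the absorption subtlety entirely.
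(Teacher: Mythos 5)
Your overall strategy (iterate at the scales $r_k=\theta^k$, exploit $(\e/\theta^k)^{1/2}\le\theta^{(k_0-k)/2}$ so the coefficients are summable, telescope $h$ through the oscillation hypothesis, and return to all $r\in[\e,1]$ by the two comparison hypotheses) is the right one; note that the paper itself gives no argument here but simply cites \cite{shen}, where the lemma is proved by essentially this kind of iteration. However, as written your proof has a genuine gap at exactly the point you flag as the main obstacle, and the way you dispose of it is not valid. You correctly observe that the naive absorption fails, because the coefficient of $M=\sup_{[\e,1]}(H+h)$ after unrolling is a fixed constant $C(C_0,\theta)$, not a small one. But the alternative you then assert --- that one can write $h(r_k)\le h(1)+C\sum_{j<k}H(3r_j)$ and that ``these $H$-terms are themselves already geometrically summable'' --- is unjustified: the boundedness (let alone geometric decay) of $\sum_j H(3\theta^j)$ is precisely what has to be proved, and a naive strong induction with a uniform constant does not close either, since near $k=k_0$ the factor $\beta_k=(\e/\theta^k)^{1/2}$ is of order one while the telescoped sum has $\sim k_0$ terms. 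So the write-up ends where the real work begins.

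The missing ingredient is a discrete Gronwall-type closing of the recursion, and it is short: after bridging scales (each comparison across a bounded ratio costs a factor $C(C_0,\theta)$, using $H(s)\le C_0H(3s)$ from the first hypothesis), set $D_j=H(3\theta^j)$, $b=H(1)+h(1)$, $S_j=\sum_{i\le j}D_i$, and deduce from the third hypothesis plus the telescoped bound on $h$ that $D_{j+1}\le \tfrac12 D_j+C\beta_j\left(S_j+b\right)$ with $\beta_j\le\theta^{(\ell-j)/2}$, $\sum_j\beta_j\le C(\theta)$. Then observe that $S_{j+1}+D_{j+1}+b\le\left(1+2C\beta_j\right)\left(S_j+D_j+b\right)$, so $S_\ell+D_\ell+b\le\exp\left(2C\sum_j\beta_j\right)\left(S_1+D_1+b\right)\le C(C_0,\theta)\,b$ after controlling the first one or two scales by the comparison hypotheses (this also disposes of your $k=0$ issue, since the third hypothesis is only available for $r\le 1/3$). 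In particular $\max_j D_j\le C\,b$ and, via the telescoping, $h(3\theta^j)\le C\,b$; the comparison hypotheses then give the stated bound for every $r\in[\e,1]$, with no absorption and no need for $M$ on the right. With this lemma inserted, your argument becomes a complete proof along the same lines as the one in \cite{shen}.
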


\begin{proof}
See~\cite{shen}.
\end{proof}

\begin{proof}[Proof of Theorem~\ref{nineteen}]
By rescaling, we may assume $R=1$.  We assume $\e\in(0,1/6)$, and we let $H(r)\equiv H_\e(r;u_\e)$, where $H_\e(r;u_\e)$ is defined above by~\eqref{fortyfive}.  Let $h(r)=|M_r|$, where $M_r\in\R^{d\times d}$ satisfies
\[
H(r)=\dfrac{1}{r}\underset{q\in\R^d}{\inf}\left(\dashint_{B_\e(r)}|u_\e-M_rx-q|^2\right)^{1/2}.
\]
Note there exists a constant $C$ independent of $r$ so that
\begin{equation}\label{fortyeight}
H(t)\leq C H(3r),\,\,\,t\in [r,3r].
\end{equation}
Suppose $s,t\in [r,3r]$.  We have
\begin{align}
|h(t)-h(s)| &\leq \dfrac{C}{r}\underset{q\in\R^d}{\inf}\left(\dashint_{B_\e(r)}|(M_t-M_s)x-q|^2\right)^{1/2} \nonumber\\
&\leq \dfrac{C}{t}\underset{q\in\R^d}{\inf}\left(\dashint_{B_\e(t)}|u_\e-M_tx-q|^2\right)^{1/2} \nonumber\\
&\hspace{10mm}+\dfrac{C}{s}\underset{q\in\R^d}{\inf}\left(\dashint_{B_\e(s)}|u_\e-M_sx-q|^2\right)^{1/2} \nonumber\\
&\leq C H(3r),\nonumber
\end{align}
where we've used~\eqref{fortyeight} for the last inequality.  Specifically,
\begin{equation}\label{fiftyone}
\underset{r\leq t,s\leq 3r}{\max}|h(t)-h(s)|\leq CH(3r).
\end{equation}
Clearly
\[
\dfrac{1}{r}\underset{q\in\R^d}{\inf}\left(\dashint_{B_\e(3r)}|u_\e-q|^2\right)^{1/2}\leq H(3r)+h(3r),
\]
and so Lemma~\ref{fifty} implies
\begin{equation}\label{fiftytwo}
H(\theta r)\leq \dfrac{1}{2}H(r)+C\left(\dfrac{\e}{r}\right)^{1/2}\left\{H(3r)+h(3r)\right\}
\end{equation}
for any $r\in [\e,1/3]$ and some $\theta\in (0,1/4)$.  Note equations~\eqref{fortyeight},~\eqref{fifty}, and~\eqref{fiftytwo} show that $H(r)$ and $h(r)$ satisfy the assumptions of Lemma~\ref{fortynine}.  Consequently, 
\begin{align}
\left(\dashint_{B_\e(r)}|\nabla u_\e|^2\right)^{1/2}&\leq \dfrac{C}{r}\underset{q\in\R^d}{\inf}\left(\dashint_{B_\e(3r)}|u_\e-q|^2\right)^{1/2} \nonumber\\
&\leq C\left\{H(3r)+h(3r)\right\} \nonumber\\
&\leq C\left\{H(1)+h(1)\right\} \nonumber\\
&\leq C\left(\dashint_{B_\e(1)}|u_\e|^2\right)^{1/2}.\label{fiftythree}
\end{align}
Since~\eqref{fiftythree} remains invariant if we subtract a constant from $u_\e$, the desired estimate in Theorem~\ref{nineteen} follows.
\end{proof}

\begin{proof}[Proof of Corollary~\ref{fiftyfive}]
Under the H\"{o}lder continuous condition~\eqref{fiftyfour} and the assumption that $\omega$ is an unbounded $C^{1,\a}$ domain for some $\alpha>0$, solutions to the systems of linear elasticity are known to be locally Lipschitz.  That is, if $\mathcal{L}_1(u)=0$ in $B(y,1)\cap\omega$ and $\sigma_1(u)=0$ on $B(y,1)\cap \dd\omega$, then
\begin{equation}\label{fiftynine}
\|\nabla u\|_{L^\infty(B(y,1/3)\cap\omega)}\leq C\left(\dashint_{B(y,1)\cap\omega}|\nabla u|^2\right)^{1/2},
\end{equation}
where $C$ depends on $d$, $\kappa_1$, $\kappa_2$, and $\omega$.

By rescaling, we may assume $R=1$.  To prove the desired estimate, assume $\e\in (0,1/6)$.  Indeed, if $\e\geq 1/6$, then~\eqref{sixtyone} follows from~\eqref{fiftynine}.  From~\eqref{fiftynine}, a ``blow-up argument'' (see the proof of Lemma~\ref{thirtysix}), and Theorem~\ref{nineteen} we deduce
\begin{align*}
\|\nabla u_\e\|_{L^\infty(B(y,\e)\cap\e\omega)}&\leq C\left(\dashint_{B(y,3\e)\cap\e\omega}|\nabla u_\e|^2\right)^{1/2} \\
&\leq C\left(\dashint_{B(x_0,1)\cap\e\omega}|\nabla u_\e|^2\right)^{1/2}
\end{align*}
for any $y\in B(x_0,1/3)$.  The deisred esitmate readily follows by covering $B(x_0,1/3)$ with balls $B(y,\e)$.
\end{proof}

\begin{proof}[Proof of Corollary~\ref{fiftyeight}]
If $u$ satisfies the growth condition~\eqref{sixtytwo}, then by Lemma~\ref{thirtyseven} and Theorem~\ref{nineteen},
\[
\left(\dashint_{B(x_0,r)\cap\omega}|\nabla u|^2\right)^{1/2}\leq C\left(\dashint_{B(x_0,R)\cap\omega}|\nabla u|^2\right)^{1/2}\leq CR^{\nu-1},
\]
where $C$ is independent of $R$.  Take $R\to\infty$ and note $\nabla u=0$ for arbitrarily large $r$.  Since $\omega$ is connected, we conclude $u$ is constant.
\end{proof}

\bibliography{/Users/bchaserussell/Documents/LaTeX/Research/PostQualifying/Paper10272016bibliography}{}
\bibliographystyle{plain}

\vspace{5mm}

\text{Brandon Chase Russell}

\textsc{Department of Mathematics}

\textsc{University of Kentucky}

\textsc{Lexington, KY 40506, USA}

\text{E-mail:} brandon.russell700@uky.edu

\end{document}